\undefined\RequirePackage{dsfont}\fi
\undefined\RequirePackage{amsmath,amsfonts,amssymb,amsthm}\fi
\title{{\Large \bfseries{Approximation of Riemannian\\ measures by Stein's method}}}
\author{James Thompson\footnote{University of Luxembourg, Email: \texttt{james.thompson@uni.lu}}}
\date{\today}
\def\P{\mathbb{P}}
\def\N{\mathbb{N}}
\def\E{\mathbb{E}}
\def\R{\mathbb{R}}
\def\1{\mathbf{1}}
\def\A{\mathcal{A}}
\def\W{\mathcal{W}}
\def\n{\nabla}
\def\<{\langle}
\def\>{\rangle}
\def\{{\lbrace}
\def\}{\rbrace}
\def\Hess{\mathop{\rm Hess}}
\def\Ric{\mathop{\rm Ric}}
\DeclareMathOperator{\Var}{Var}
\DeclareMathOperator{\Cut}{Cut}
\DeclareMathOperator{\Aut}{Aut}
\DeclareMathOperator{\Lip}{Lip}
\DeclareMathOperator{\id}{id}
\DeclareMathOperator{\tr}{tr}
\DeclareMathOperator{\cosech}{cosech}
\newtheorem{theorem}{Theorem}[section]
\newtheorem{lemma}[theorem]{Lemma}
\newtheorem{proposition}[theorem]{Proposition}
\newtheorem{corollary}[theorem]{Corollary}
\newtheorem{definition}[theorem]{Definition}
\begin{document}

\maketitle

\begin{abstract}%
   \noindent%
   {In this article, we present the theoretical basis for an approach to Stein's method for probability distributions on Riemannian manifolds. Using a semigroup representation for the solution to the Stein equation, we use tools from stochastic calculus to estimate the derivatives of the solution, yielding a bound on the Wasserstein distance. We first assume the Bakry-Emery-Ricci tensor is bounded below by a positive constant, after which we deal separately with the case of uniform approximation on a compact manifold. Applications of these results are currently under development and will appear in a subsequent article.}\\[1em]%
   {\footnotesize%
     \textbf{Keywords: }{Stein's method ; Riemannian manifold ; Ricci curvature ; Bismut formula, Wasserstein distance }\par%
     \noindent\textbf{AMS MSC 2010: }%
     {53C21; 58J65; 60F05; 60H30; 60J60}\par
   }%footnotesize
\end{abstract}

\section{Introduction}

Stein's method provides a way to obtain bounds on the distance between two probability distributions with respect to a suitable metric. For a converging sequence of such distributions, it furthermore provides a bound on the rate of convergence. In this article, we develop an approach to Stein's method for probability distributions on \emph{Riemannian manifolds}.
\bigbreak
The history of Stein's method begins in 1972 with the seminal work of Charles Stein on normal approximation \cite{Stein1972}. In this this article, we will not provide a detailed introduction to Stein's method, since many such introductions already exist, and instead refer the reader to the survey \cite{Chatterjee2014}.
\bigbreak
In the article \cite{FangShaoXu2019-1}, together with its corrections \cite{FangShaoXu2019-2}, Fang, Shao and Xu presented an approach to multivariate approximation in the Wasserstein distance, using Stein's method and Bismut's formula. They used Bismut's formula to estimate derivatives of the solution to Stein's equation. We take a similar approach in the general setting of a Riemannian manifold, having to first calculate Bismut-type integration by parts formulas for the gradient, Hessian and \emph{third} derivative of the solution.
\bigbreak
Let us first say a little more about Bismut's formula. For a suitable function $f$ on a complete Riemannian manifold $M$ and a semigroup $P_t$ generated by a diffusion operator of the form
\begin{equation}
\A = \tfrac{1}{2}\Delta + Z
\end{equation}
for $Z$ a smooth vector field, Bismut's formula provides a probabilistic formula for the derivative $\nabla P_tf$ that does not involve the derivative of $f$. Bismut \cite{Bismut1984} proved it on compact manifolds using techniques from Malliavin calculus, after which it was generalized to a full integration by parts formula by Driver \cite{Driver1992}. An elementary approach to derivative formulas, based on martingales, was then developed by Elworthy and Li \cite{ElworthyLi1994}, after which an approach based on local martingales was given by Thalmaier \cite{Thalmaier1997} and Driver and Thalmaier \cite{DriverThalmaier2001}. In this article, we follow the latter approach, calculating our derivative formulas by first identifying appropriate local martingales. While Bismut's formula is well-known, formulas for the Hessian, or second derivative, of the semigroup are not well-known. Nonetheless, various formulas for the Hessian have been given, for example, in \cite{ElworthyLi1994,ArnaudonPlankThalmaier2003,Wang2019} and \cite{Thompson2019}. In this article we present, to the best of the author's knowledge for the first time, a complete set of formulas for the first, second and \emph{third} derivative of the semigroup.
\bigbreak
It is worth noting that there is a relationship between Bismut's formula and Stein's lemma. In particular, Hsu proved in \cite{Hsu2005} that just as the relation
\begin{equation}\label{eq:steinlemma}
\E[f'(X)] = \E[Xf(X)]
\end{equation}
characterizes the standard normal distribution, which roughly speaking is the statement of Stein's lemma, so Driver's integration by parts formula
\begin{equation}\label{eq:ibpdriver}
\E[D_h F(X)] = \E\left[F(X)\int_0^1 \bigg\langle \dot{h}_s + \frac{1}{2}{\Ric}_{U(X)_s} h_s,dW_s\bigg\rangle\right]
\end{equation}
characterizes Brownian motion, at least on a compact manifold, among the set of probability measures on the path space of $M$ for which the coordinate process is a semimartingale. Note that if $M=\R$ with $h_s = s$ and $F(X) = f(X_1)$ then equation \eqref{eq:ibpdriver} becomes equation \eqref{eq:steinlemma}. So Hsu's theorem is an analogue of Stein's lemma for the infinite-dimensional path space setting. Bismut's formula is a special case of equation \eqref{eq:ibpdriver}, namely the one in which $F$ is given by the composition of some $f$ with an evaluation at a fixed time. 
\bigbreak
\textbf{Main results.} Now let us discuss the main results. For a smooth function $\psi$ on a complete Riemannian manifold $M$ with
\begin{equation}
c:= \int_M e^{\psi} d\mu < \infty,
\end{equation}
consider the probability measure $\mu_\psi$ on $M$ given by
\begin{equation}
d\mu_\psi:= \frac{1}{c} e^{\psi} d\mu.
\end{equation}
One example would be the case in which $\psi$ is given by the logarithm of the heat kernel, which would correspond to \emph{heat kernel approximation}, which on $\R^n$ would reduce to \emph{normal approximation}. Another example would be to take $\psi = 0$ on a compact manifold, which would correspond to \emph{uniform approximation}.
\bigbreak
Our main result, which is Theorem \ref{thm:mainthmone}, is then stated as follows: Suppose $K>0$ with $${\Ric}_{\psi}:= \Ric - 2\Hess \psi \geq K$$ where $\Ric$ denotes the Ricci curvature tensor. Suppose
\begin{equation}
\begin{aligned}
&\|R\|_\infty < \infty,\\
&\|\n R\|_\infty < \infty,
\end{aligned}\quad
\begin{aligned}
&\|\nabla {\Ric}^\sharp_\psi + d^\star R -2R(\n \psi)\|_\infty < \infty,\\
&\|\n(\nabla {\Ric}^\sharp_\psi + d^\star R -2R(\n \psi))\|_\infty < \infty\\
\end{aligned}
\end{equation}
where $R$ denotes the full Riemann curvature tensor. Then, denoting by $\W$ the Wasserstein distance, we will prove that there exists a positive constant $C$ such that if $W,W'$ is any pair of identically distributed $M$-valued random variables satisfying
\begin{equation}
\P\{(W,W')\in \Cut\} = 0,
\end{equation}
where $\Cut$ denotes the cut locus of $M$, then
\begin{equation}
\mathcal{W}(\mathcal{L}(W),\mu_\psi) \leq C \left(\frac{1}{\lambda}\E[|\delta|^3(|\log|\delta||\vee 1)] + \E[|R_1|] + \E[|R_2|]\right)
\end{equation}
for all $\lambda >0$, where $|\delta| = d(W,W')$ and where the remainders $R_1$ and $R_2$ are defined, in terms of $\psi$, the Riemannian metric and $\lambda$, in Section \ref{sec:idpairs}.
\bigbreak
When $M=\R^n$, this becomes Fang, Shao and Xu's estimate from \cite{FangShaoXu2019-1}. They showed how such a bound on the Wasserstein distance can indeed be obtained at the price of the $\log|\delta|$. In Theorem \ref{thm:mainthmtwo}, we state a version of the above bound in which the $\log|\delta|$ is dispensed with, but in which the Wasserstein distance is replaced by a $C^2$-distance.
\bigbreak
Note that we assume various bounds on the curvature and Bakry-Emery-Ricci tensors. These assumptions need not be so strong. For example, the uniform boundedness assumptions could be relaxed to allow a certain amount of growth, while in many cases the positive lower bound on $\Ric_{\psi}$ is simply not necessary. The positive lower bound will be used to verify exponential convergence to equilibrium, but as shown for example by Cheng, Thalmaier and Zhang in the recent article \cite{ChengThalmaierZhang2020}, weaker conditions exist. In Section \ref{sec:7}, we show how all assumptions on curvature can be dispensed with for the case $\psi=0$ with $M$ compact, using instead the existence of a spectral gap, yielding Theorems \ref{thm:mainthmonecompact} and \ref{thm:mainthmonecompact}. A weakening of the assumptions in the non-compact case is a topic for future consideration, but for the time being, we will stick to the boundedness assumptions for the sake of simplicity.
\bigbreak
A robust formulation of Stein's method for the manifold setting should yield a central limit theorem for geodesic random walks and estimates on the rates of convergence for these and other sampling algorithms. While this article presents the theoretical basis for this work, the applications themselves are currently under development and will be publicized later.
\bigbreak
It is becoming increasingly important to study probability distributions on manifolds, for a variety of reasons. For example, there is growing interest in the stochastic analysis of manifolds among members of the deep learning community. Over the last decade, deep learning methods based on artificial neural networks have achieved unprecedented performance on a broad range of problems arising in a variety of different contexts, but research has mainly focused on data belonging to Euclidean domains. \emph{Geometric deep learning} seeks to extend these techniques to geometrically structured data, extracting non-linear features, facilitating dimensionality reduction and the use of pattern recognition or classification algorithms. For a survey of this topic see \cite{BBLSV2017}.
\bigbreak
\textbf{Organization of the paper.} This article is targeted primarily at those who are more familiar with Stein's method than with Riemannian geometry. Many of the computational details involving curvature tensors will therefore be left to later sections. Moreover we start in Section \ref{sec:2} with a brief review of Riemannian geometry, including the Riemannian distance function, volume measure and Ricci curvature. Afterwards, in Section \ref{sec:3}, we look at Stein's equation and show how its solution can be written in terms of a semigroup. In Sections \ref{sec:4} and \ref{sec:idpairs} we show how Taylor expansion can be used to analyse the difference between two identically distributed manifold-valued random variables. We then present the main results in Section \ref{sec:6}. In Section \ref{sec:7} we consider the special case of a compact manifold with $\psi=0$, where the assumption of positive Ricci curvature can be dropped due to the existence of a spectral gap. In Section \ref{sec:8} we provide a selection of examples of Riemannian measures which satisfy the criterion $\Ric_{\psi}>K>0$. The remainder of the paper consists of the calculations needed to obtain the derivative estimates used in the proofs of the main theorems in Section \ref{sec:6}. This starts in Sections \ref{sec:comm} and \ref{sec:10} with the required commutation relations, which are then used to obtain the derivative formulas in Section \ref{sec:11}, which are finally used to derive the necessary derivative estimates in Section \ref{sec:derests}.

\section{Riemannian measure}\label{sec:2}

A topological space $M$ is called \emph{locally Euclidean} if there exists a non-negative integer $n$ such that every point in $M$ has a neighbourhood that is homeomorphic to the Euclidean space $\R^n$. The number $n$ is referred to as the \emph{dimension} of $M$.

\begin{definition}
A locally Euclidean second-countable Hausdorff space is called a \emph{topological manifold}.
\end{definition}

While topological manifolds inherit many of the local properties of Euclidean space, such as local compactness and local connectedness, the Hausdorff property is non-local and must therefore be assumed.

\begin{definition}
A topological manifold equipped with a smooth structure is called a \emph{smooth manifold}.
\end{definition}

A \emph{smooth structure} gives meaning to smooth functions, tangent spaces and vectors fields. It allows for calculus on the manifold to be performed unambiguously. For a smooth manifold $M$, second-countability and the Hausdorff property are, according to the Whitney embedding theorem, precisely the conditions required to ensure the existence of a smooth map, between $M$ and some finite-dimensional Euclidean space, under which $M$ is diffeomorphic to its image. Such a map is called an \emph{embedding} and it can be shown that the smallest Euclidean space into which all $n$-dimensional smooth manifolds can be embedded is $\R^{2n}$.
\bigbreak
For each point $x \in M$, the \emph{tangent space at $x$}, denoted $T_xM$, is the $n$-dimensional vector space consisting of the equivalence classes of velocities at $x$ of all smooth curves passing through $x$, with two velocities said to be equivalent if their curves when composed with a local homeomorphism have velocities equal in $\R^n$. The disjoint union of the tangent spaces is called the \emph{tangent bundle} and denoted $TM$. A smooth assignment of tangent vectors to each tangent space is called a \emph{vector field}. A smooth assignment of cotangent vectors to each cotangent space $(T_xM)^\star$ is called a \emph{differential $1$-form}. Given a smooth function $f$, the \emph{differential} of $f$ is the unique $1$-form $df$ such that for all vector fields $V$ and points $x \in M$ the composition $(df)_x(V(x))$ coincides with the directional derivative $V(f)(x)$ at $x$ of $f$ in the direction $V$.
\bigbreak
A \emph{Riemannian metric} is a smooth assignment of inner products to each tangent space. Denoting the inner products by $\<\cdot,\cdot\>$, this means that if $V_1$ and $V_2$ are vector fields then the map $x \mapsto \< V_1(x),V_2(x)\>_x$ is smooth.

\begin{definition}
A smooth manifold equipped with a Riemannian metric is called a \emph{Riemannian manifold}.
\end{definition}

By a theorem of Nash, a Riemannian manifold can always be isometrically embedded into a Euclidean space but, as with the Whitney embeddings, there is not a canonical way of doing so. Riemannian manifolds support a number of intrinsically defined objects, familiar from $\R^n$, the most important of which we will now briefly describe.
\bigbreak
\textbf{Riemannian distance.} For a Riemannian manifold $M$, the \emph{Riemannian distance} $d$, with respect to which $M$ is a metric space, is defined for $x,y \in M$ by
\begin{equation}
d(x,y) := \inf_{\gamma} \int_0^1 | \dot{\gamma}(t) |_{\gamma(t)} dt
\end{equation}
where the infimum is over all piecewise smooth curves $\gamma: [0,1]\rightarrow M$ with $\gamma(0) = x$ and $\gamma(1)=y$. We will assume that $M$ equipped with this distance is a complete metric space. If we denote by $\mathcal{P}(M)$ collection of all probability measures $\nu$ on $M$ such that
\begin{equation}
\int_M d(x,y) d \nu (y) < + \infty
\end{equation}
for some $x \in M$ then, for two such measures $\nu_1$ and $\nu_2$, the \emph{Wasserstein distance} between $\nu_1$ and $\nu_2$ is given by
\begin{equation}
\W (\nu_1,\nu_2) = \sup\lbrace| \nu_1(h)-\nu_2(h)|:h \in \Lip_1(M)\rbrace
\end{equation}
where $\Lip_1(M)$ denotes the set of functions with $|h(x)-h(y)| \leq d(x,y)$ for any $x,y \in M$.
\bigbreak
\textbf{Volume measure.} Viewing $(M,d)$ as a metric space, the \emph{Riemannian volume measure} $\mu$ is then the normalized $n$-dimensional Hausdorff measure restricted to the $\sigma$-algebra of Borel sets. It is normalized in the sense that it agrees with the Lebesgue measure on $\R^n$.
\bigbreak
\textbf{Laplace-Beltrami operator.} Given the measure $\mu$, we denote by $\Delta$ the unique second-order partial differential operator with the property that
\begin{equation}\label{eq:laplacian}
\int_M f \Delta g\, d\mu  = - \int_M \langle df,dg\rangle\, d\mu
\end{equation}
for all compactly supported smooth functions $f$ and $g$, where $\langle \cdot,\cdot \rangle$ denotes the inner product induced on a cotangent space by Riesz isomorphism. It is called the \emph{Laplace-Beltrami operator}.
\bigbreak
\textbf{Brownian motion.} By a \emph{diffusion} we mean a continuous time Markov process with almost surely continuous sample paths. A diffusion on $M$ with differential generator $\frac{1}{2}\Delta$ is called a \emph{Brownian motion}. More generally, suppose $Z$ is a smooth vector field on $M$ and for each $x \in M$ suppose $X(x)$ is a diffusion on $M$ starting at $x \in M$ with generator
\begin{equation}\label{eq:genA}
\A:= \tfrac{1}{2}\Delta +Z
\end{equation}
and explosion time $\zeta(x)$. The random time $\zeta(x)$, which could be infinite, is the first time at which $X(x)$ leaves all compact subsets of $M$. Note that on an arbitrary smooth manifold, any elliptic second-order partial differential operator with smooth coefficients and vanishing zeroth order part induces, via its principal symbol, a Riemannian metric with respect to which it takes precisely the form \eqref{eq:genA} for some vector field $Z$. We define the associated minimal semigroup $\lbrace P_t : t\geq 0\rbrace$ acting on bounded measurable functions $f$ by the formula
\begin{equation}
P_tf(x) := \E[\1_{\lbrace t<\zeta(x)\rbrace} f(X_t(x))]
\end{equation}
which on $(0,\infty) \times M$ is smooth and satisfies the diffusion equation
\begin{equation}
(\partial_t -\A)P_tf = 0
\end{equation}
with initial condition $P_0f = f$.
\bigbreak
\textbf{Ricci curvature.} For a Riemannian manifold $M$, of dimension $n$, the Ricci curvature of $M$, denoted $\Ric$, is a tensor field on $M$ which acts as a symmetric bilinear form on each tangent space. Its precise definition is given in Section \ref{sec:comm}. It provides a way to measure the degree to which the Riemannian geometry of $M$ differs from the Euclidean geometry of $\R^n$. For example, in normal coordinates centred at a point $p\in M$, the volume measure $\mu$ satisfies at $p$ the Taylor series expansion
\begin{equation}
d\mu = \left( 1 - \frac{1}{6}{\Ric}_{ij} x^i x^j + O(|x|^3)\right) d x^1 \cdots dx^n
\end{equation}
where ${\Ric}_{ij}$ denote the components, in these coordinates, of the Ricci curvature tensor. Ricci curvature further relates to Brownian motion via the Weitzenb\"{o}ck formula, which we will introduce later. For a diffusion with generator $\A$ the relevant object is the \emph{Bakry-Emery-Ricci tensor} defined by
\begin{equation}
{\Ric}_Z := \Ric - 2\n Z.
\end{equation}
For $K \in \R$ we write ${\Ric}_Z \geq K$ if
\begin{equation}
{\Ric}_Z(v,v) \geq K |v|^2
\end{equation}
for all tangent vectors $v$, that is, if ${\Ric}_Z$ is bounded below by $K$ in the sense of bilinear forms. It is well known that if ${\Ric}_Z$ is bounded below by a constant then $P_t 1 = 1$, which is to say that for each $x \in M$ the explosion time $\zeta(x)$ is almost surely infinite. Moreover, we have the following theorem:

\begin{theorem}\label{thm:exponentialconv}
The following are equivalent:
\begin{itemize}
\item[(i)] ${\Ric}_Z \geq 2K$;
\item[(ii)] $|\n P_t f| \leq e^{-Kt} P_t|\n f|$ for any $f \in C^1_b(M)$ and $t \geq 0$;
\item[(iii)] $\W(P_t^\ast \nu_1 , P_t^\ast \nu_2) \leq e^{-Kt} \W(\nu_1,\nu_2)$ for any probability measures $\nu_1,\nu_2$ and $t \geq 0$.
\end{itemize}
\end{theorem}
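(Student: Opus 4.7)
The plan is to establish the cyclic chain of implications $(i) \Rightarrow (ii) \Rightarrow (iii) \Rightarrow (i)$, using a Bismut-type derivative formula for the first step, Kantorovich--Rubinstein duality for the second, and a Weitzenb\"ock-type pointwise computation for the third. The main obstacle will be the closure $(iii) \Rightarrow (i)$, which requires a careful choice of test function.

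For $(i) \Rightarrow (ii)$, I would invoke the Bismut--Elworthy--Li representation that the paper develops in its later sections, namely
\begin{equation*}
\n P_t f(x) \cdot v \,=\, \E\bigl[\<Q_t v,\, \n f(X_t(x))\>\bigr],
\end{equation*}
where $X_t(x)$ is the $\A$-diffusion from $x$ and $Q_t$ is the damped parallel transport along $X_t$, satisfying the pathwise ODE $DQ_t/dt = -\tfrac{1}{2}\Ric_Z^{\sharp}\, Q_t$ with $Q_0 = \id$. Under $\Ric_Z \geq 2K$ one has $\tfrac{d}{dt}|Q_t v|^2 \leq -2K|Q_t v|^2$, hence $|Q_t v| \leq e^{-Kt}|v|$ almost surely. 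Cauchy--Schwarz inside the expectation then yields $|\n P_t f|(x) \leq e^{-Kt}\, P_t |\n f|(x)$, which is $(ii)$. A purely analytic alternative uses the Bakry--\'Emery $\Gamma_2$ calculus applied to $s \mapsto P_s(|\n P_{t-s}f|)$.

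For $(ii) \Rightarrow (iii)$, given $h \in \Lip_1(M)$ one approximates by $C^1_b$ functions with gradient of norm at most $1$; applying $(ii)$ and passing to the limit, $P_t h$ is $e^{-Kt}$-Lipschitz, so $e^{Kt} P_t h \in \Lip_1(M)$. Using $\nu(P_t h) = (P_t^\star \nu)(h)$, Kantorovich--Rubinstein duality then gives
\begin{equation*}
\bigl|(P_t^\star \nu_1)(h) - (P_t^\star \nu_2)(h)\bigr| \,=\, e^{-Kt}\bigl|\nu_1(e^{Kt} P_t h) - \nu_2(e^{Kt} P_t h)\bigr| \,\leq\, e^{-Kt}\,\W(\nu_1,\nu_2),
\end{equation*}
and taking the supremum over $h \in \Lip_1(M)$ yields $(iii)$.

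For $(iii) \Rightarrow (i)$, applying $(iii)$ with Dirac measures $\nu_i = \delta_{x_i}$ and dualising gives $\|\n P_t f\|_\infty \leq e^{-Kt}\|\n f\|_\infty$ for $f \in C^1_b(M)$. To extract the pointwise Ricci lower bound at a prescribed $x_0 \in M$ and unit vector $v \in T_{x_0}M$, I would construct $f \in C^\infty_b(M)$ with $\n f(x_0)=v$ and $\Hess f(x_0)$ a rank-one operator aligned with $v$, then differentiate $t \mapsto |\n P_t f|^2(x_0)$ at $t=0$ and apply the Weitzenb\"ock formula for $\A$,
\begin{equation*}
2\<\n f, \n \A f\> \,=\, \A|\n f|^2 - |\Hess f|^2 - \Ric_Z(\n f,\n f),
\end{equation*}
together with the Kato identity $|\n|\n f||^2 = |\Hess f(\n f,\cdot)|^2/|\n f|^2$, to extract $\Ric_Z(v,v) \geq 2K$. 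This closes the loop. The subtlety here, and the main obstacle of the whole argument, is that a naive application of Bochner at a maximum of $|\n f|^2$ produces a Hessian term with the wrong sign; the rank-one construction is designed precisely to cancel this term via Kato, so that the residual inequality yields the sharp Ricci lower bound rather than a weaker one.
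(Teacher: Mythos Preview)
First, note that the paper does not supply its own proof of this theorem; it simply cites \cite[Theorem~2.3.3]{Wang2014}. Your cyclic strategy is the standard one, and your steps $(i)\Rightarrow(ii)$ and $(ii)\Rightarrow(iii)$ are correct.

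Your $(iii)\Rightarrow(i)$, however, has a genuine gap. From $(iii)$ with Dirac masses you correctly extract only the sup-norm bound $\|\n P_t f\|_\infty \leq e^{-Kt}\|\n f\|_\infty$, so any differentiation of $t\mapsto|\n P_t f|^2(x_0)$ at $t=0$ must be performed at a point $x_0$ where $|\n f|$ attains its maximum. At such a maximum $\n|\n f|(x_0)=0$, and your Kato identity then reads $0=|\Hess f(v,\cdot)|^2$; demanding Kato \emph{equality} $|\n|\n f||^2=|\Hess f|^2$ forces $\Hess f(x_0)=0$, not a nonzero rank-one operator ``aligned with $v$''. Indeed, if $\Hess f(x_0)=\lambda\,v\otimes v^\flat$ with $\lambda\neq 0$ then $\n|\n f|^2(x_0)=2\lambda\,v^\flat\neq 0$, so $x_0$ is not even a critical point of $|\n f|^2$. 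With $\Hess f(x_0)=0$ instead, Bochner gives $\Ric_Z(v,v)\geq 2K+\A|\n f|^2(x_0)$, and the term $\A|\n f|^2(x_0)\leq 0$ at a maximum spoils the sharp constant. The rank-one/Kato mechanism you describe is actually the correct argument for $(ii)\Rightarrow(i)$: there the \emph{pointwise} bound $|\n P_t f|(x_0)\leq e^{-Kt}P_t|\n f|(x_0)$ is available at every $x_0$ with no maximum condition, and differentiating it at $t=0$ together with $\A(g^2)=2g\,\A g+|\n g|^2$ yields exactly $\Ric_Z(\n f,\n f)\geq 2K|\n f|^2-(|\Hess f|^2-|\n|\n f||^2)$, which is sharp once one takes $f$ with $\n f(x_0)=v$ and $\Hess f(x_0)=0$. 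So either reroute the chain so that the Bochner/Kato step consumes $(ii)$ rather than the sup-norm consequence of $(iii)$, or replace your $(iii)\Rightarrow(i)$ by one of the known alternatives, such as the short-time expansion of $\W(P_t^\ast\delta_x,P_t^\ast\delta_y)$ along a geodesic (von Renesse--Sturm).
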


\begin{proof}
See, for example, the proof of \cite[Theorem~2.3.3]{Wang2014}.
\end{proof}

\section{Stein equation}\label{sec:3}

Now suppose $\psi$ is a smooth function on $M$ such that
\begin{equation}
c:= \int_M e^{\psi} d\mu < \infty.
\end{equation}
For such $\psi$ consider the probability measure $\mu_\psi$ on $M$ defined by
\begin{equation}
d\mu_\psi:= \frac{1}{c} e^{\psi} d\mu.
\end{equation}
By \eqref{eq:laplacian} it follows that the measure $\mu_\psi$ is invariant for the symmetric operator
\begin{equation}
\A = \frac{1}{2}\Delta + \n \psi
\end{equation}
which fits into the framework of the previous section by setting $Z = \n \psi$. Note here that $\n \psi$ denotes the gradient of $\psi$. Suppose also that $\mu_\psi$ has finite first moments. That is, suppose
\begin{equation}
\int_M d(x,y) d\mu_\psi (y) < \infty
\end{equation}
for some $x \in M$ and therefore, by the triangle inequality, for every $x \in M$. Then
\begin{align}
\W(\delta_x,\mu_\psi)=\,& \sup_{\Lip_1(M)} |h(x) - \mu_\psi(h)|\\
\leq \,& \sup_{\Lip_1(M)} \int_M |h(x) - h(y)| d\mu_\psi(y)\\
\leq\,& \int_M d(x,y) d\mu_\psi(y)
\end{align}
which is finite. Consequently, if we set
\begin{equation}
{\Ric}_\psi := \Ric - 2 \Hess \psi
\end{equation}
and assume ${\Ric}_\psi$ is bounded below by a positive constant then, by Theorem \ref{thm:exponentialconv}, it follows that the semigroup $P_t$ converges exponentially fast to equilibrium with $\mu_\psi$ the unique invariant ergodic measure. For example, if $\Ric$ is bounded below by a positive constant then the normalized Riemannian measure $\mu$ is the unique invariant probability measure for Brownian motion. Note that in this case, according to the Bonnet-Myers theorem, $M$ must be compact and therefore of finite volume. More generally, if $K>0$ with
\begin{equation}
{\Ric}_\psi \geq 2K
\end{equation}
then Theorem \ref{thm:exponentialconv} implies
\begin{equation}
\sup_{\Lip_1(M)} |P_th(x) - \mu_\psi(h)|= \W(P^\ast_t \delta_x,\mu_\psi)\leq e^{-Kt} \W(\delta_x,\mu_\psi)
\end{equation}
and so we have
\begin{equation}
\sup_{\Lip_1(M)} \bigg| \int_0^\infty P_th(x) - \mu_\psi(h) ds\bigg| < \infty.
\end{equation}
Consequently, if given a function $h\in \Lip_1(M)$ we suppose $f$ is a solution to the \emph{Stein equation}
\begin{equation}\label{eq:stein}
\mathcal{A}f = h - \mu_\psi(h)
\end{equation}
then $f$ is given by the formula
\begin{equation}\label{eq:poissoln}
f(x) = - \int_0^\infty P_th(x) - \mu_\psi(h) dt
\end{equation}
for all $x \in M$.

\section{Taylor expansion}\label{sec:4}

The \emph{cut locus} of $M$, denoted $\Cut$, is the the closure in $M \times M$ of the set of all pairs of points at which the squared distance function $d^2$ fails to be differentiable. It is a set of $\mu$-measure zero. For pairs of points outside the cut locus, there exists a unique length-minimizing geodesic segment connecting the two points. If $w,w' \in M$ are points outside the cut locus then we denote by $\gamma:=\gamma_{w,w'} :[0,1]\rightarrow M$ the unique minimizing geodesic segment with $\gamma(0) = w$ and $\gamma(1) = w'$. The geodesic $\gamma$ has initial velocity
\begin{equation}
\delta:= \dot{\gamma}(0) \in T_w M
\end{equation}
and speed $|\delta| = d(w,w')$. For an arbitrary function $f$ that is sufficiently continuously differentiable, defining $\phi:[0,1] \rightarrow \R$ by $\phi(t):= f(\gamma(t))$ we see, by Taylor's theorem with the remainder in integral form, that
\begin{align}
\phi(t) =\,& \phi(0) + \phi'(0)t + \int_0^t \phi''(s)(t-s)ds\\
=\,& \phi(0) + \phi'(0)t + \frac{1}{2}\phi''(0) t^2 + \frac{1}{2}\int_0^t \phi'''(s)(t-s)^2ds
\end{align}
for all $t \in [0,1]$. We will make use of both of the above equations. Since $\gamma$ is a geodesic it follows that
\begin{align}
\phi'(s) =\,& \frac{d}{ds} f(\gamma(s)) = (df)_{\gamma(s)}(\dot{\gamma}(s))\\
\phi''(s) =\,& \frac{d}{ds} (df)_{\gamma(s)}(\dot{\gamma}(s)) = (\n df)_{\gamma(s)} (\dot{\gamma}(s),\dot{\gamma}(s))\\
\phi'''(s) =\,& \frac{d}{ds} (\n df)_{\gamma(s)} (\dot{\gamma}(s),\dot{\gamma}(s)) = (\n \n df)_{\gamma(s)} (\dot{\gamma}(s),\dot{\gamma}(s),\dot{\gamma}(s))
\end{align}
and therefore, since $\delta= \dot{\gamma}(0)$, setting $t = 1$ we find that if $f\in C^3(M)$ then
\begin{align}
&f(w')-f(w)\\
=\,& (df)_w(\delta) + \frac{1}{2}(\n df)_{w} (\delta, \delta) + \frac{1}{2}\int_0^1 (\n \n df)_{\gamma(s)} (\dot{\gamma}(s),\dot{\gamma}(s),\dot{\gamma}(s))(1-s)^2 ds.
\end{align}
Moreover, denoting by $//$ the parallel transport along $\gamma$, since
\begin{equation}
(\n df)_{\gamma(s)} (\dot{\gamma}(s),\dot{\gamma}(s)) = (\n df)_{\gamma(s)} (//_s \delta,//_s \delta)= (//_s^{\otimes 2})^{-1}(\n df)_{\gamma(s)}(\delta, \delta)
\end{equation}
we find if $f \in C^2(M)$ then
\begin{align}
&f(w')-f(w)\\
=\,& (df)_w(\delta) + \int_0^1 (\n df)_{\gamma(s)} (\dot{\gamma}(s),\dot{\gamma}(s))(1-s)ds \\
=\,& (df)_w(\delta) + \frac{1}{2}(\n df)_{w} (\delta, \delta) +  \int_0^1 ((//_s^{\otimes 2})^{-1}(\n df)_{\gamma(s)}- (\n df)_{w}) (\delta, \delta)(1-s)ds.
\end{align}
Recall that this length-minimizing geodesic $\gamma = \gamma_{w,w'}$ is uniquely determined so long as $w$ and $w'$ are outside the cut locus. 

\section{Identical pairs}\label{sec:idpairs}

Now suppose $W$ is an $M$-valued random variable. Suppose $W'$ is another $M$-valued random variable, defined on the same probability space, with the same distribution as $W$ and
\begin{equation}
\P\{(W,W')\in \Cut\} = 0. \tag{A0}
\end{equation}
Denote by $\delta$ the initial velocity of the unique minimizing geodesic segment connecting $W$ and $W'$, as in the previous section. Fix $\lambda >0$ and define the $T_W M$-valued random variable $R_1$ according to the relation
\begin{equation}
\E[\delta | W] = \lambda (R_1 + (\nabla \psi)(W)) \tag{A1}
\end{equation}
and define the $T_WM \otimes T_WM$-valued random variable $R_2$ according to the relation
\begin{equation}
\E[\delta^{\otimes 2} | W] = \lambda \left(2 R_2 + \sum_{i=1}^n e_i(W) \otimes e_i(W)\right) \tag{A2}
\end{equation}
where $\{ e_i(W)\}_{i=1}^n$ denotes an orthonormal basis of the tangent space $T_WM$. For the case in which $M = \R^n$ the sum of these tensor products reduces simply to the identity matrix. By (A1) and (A2) it follows that
\begin{align}
&\E[(df)_W(\delta)] + \tfrac{1}{2} \E[(\n d f)_W(\delta,\delta)]\\[2mm]
=\,& \E[(df)_W(\E[\delta | W])] + \tfrac{1}{2} \E[(\n d f)_W(\E[\delta^{\otimes 2} | W])]\\[2mm]
=\,& \lambda \E[ (df)_W(\n \psi)] + \lambda \E[(df)_W(R_1) ] + \tfrac{\lambda}{2} \E[\tr (\n d f)_W] + \lambda \E[(\n d f)_W(R_2)]\\[2mm]
=\,& \lambda \E[ (\tfrac{1}{2}\Delta + \nabla \psi)f(W)] + \lambda \E[(df)_W(R_1)]+ \lambda \E[(\n d f)_W(R_2)]
\end{align}
and consequently
\begin{align}
\lambda \E[ \mathcal{A}f(W)] =\,& \E[(df)_W(\delta)] + \tfrac{1}{2} \E[(\n d f)_W(\delta,\delta)]\\
&-\lambda \E[(df)_W(R_1)]- \lambda\E[(\n d f)_W(R_2)]. \label{eq:steinrear}
\end{align}
This is the formula on which our estimates will be based.

\section{Main results}\label{sec:6}

In this section we present the main results. We consider the case in which $\Ric_\psi \geq 2K$ for some constant $K > 0$, since this allows for an application of Theorem \ref{thm:exponentialconv}. For the Ricci flat case, this is to say that $\psi$ is uniformly logarithmically concave.

\begin{proposition}\label{prop:steinest1}
Suppose $K > 0$ with $\Ric_\psi \geq 2K$ and $h \in C^1_b(M)$ and that $f$ is a solution to the Stein equation \eqref{eq:stein}. Then $\|\n f\|_\infty \leq \frac{1}{K}\|\n h\|_\infty$.
\end{proposition}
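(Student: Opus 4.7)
The plan is to differentiate the explicit semigroup representation of the solution and apply the gradient estimate from Theorem \ref{thm:exponentialconv}.

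First, I would recall from Section \ref{sec:3} that under the assumption $\Ric_\psi \geq 2K > 0$, the function
\begin{equation}
f(x) = -\int_0^\infty \bigl(P_t h(x) - \mu_\psi(h)\bigr)\, dt
\end{equation}
is well-defined and solves the Stein equation. Since $h \in C_b^1(M)$, the integrand and its gradient in $x$ decay exponentially, so differentiation under the integral sign is justified, giving
\begin{equation}
\nabla f(x) = -\int_0^\infty \nabla P_t h(x)\, dt.
\end{equation}

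Next, I would invoke the equivalence (i) $\Leftrightarrow$ (ii) in Theorem \ref{thm:exponentialconv}, with $Z = \nabla \psi$ so that $\Ric_Z = \Ric_\psi \geq 2K$. This yields the pointwise gradient bound $|\nabla P_t h| \leq e^{-Kt} P_t |\nabla h|$ for every $t \geq 0$. Combining this with the previous display and the contraction property $P_t |\nabla h| \leq \|\nabla h\|_\infty$ (since $P_t$ is a sub-Markov semigroup), I obtain
\begin{equation}
|\nabla f(x)| \leq \int_0^\infty e^{-Kt} P_t|\nabla h|(x)\, dt \leq \|\nabla h\|_\infty \int_0^\infty e^{-Kt}\, dt = \frac{1}{K}\|\nabla h\|_\infty,
\end{equation}
which gives the claimed estimate after taking the supremum in $x$.

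There is no real obstacle here: the work has already been done in Theorem \ref{thm:exponentialconv}, and the proof is essentially a one-line application of the exponential contraction of the gradient along the semigroup, combined with the Poisson-type representation \eqref{eq:poissoln} of $f$. The only mild point to verify is the legitimacy of interchanging $\nabla$ with the time integral, which follows from the uniform-in-$t$ exponential bound on $\nabla P_t h$ provided by (ii).
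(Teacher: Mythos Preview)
Your proof is correct and follows essentially the same route as the paper: the paper invokes Theorem~\ref{thm:estone} (which is just the gradient bound $\|\nabla P_t h\|_\infty \le e^{-Kt}\|\nabla h\|_\infty$, itself a restatement of Theorem~\ref{thm:exponentialconv}(ii)) together with the representation~\eqref{eq:poissoln}, and then integrates in $t$ exactly as you do. The only cosmetic difference is that you cite Theorem~\ref{thm:exponentialconv} directly rather than going through Theorem~\ref{thm:estone}.
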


\begin{proof}
This follows from Theorem \ref{thm:estone} and formula \eqref{eq:poissoln}.
\end{proof}

\begin{proposition}\label{prop:steinest2}
Suppose $K > 0$ with $\Ric_\psi \geq 2K$ and $h \in C^1_b(M)$. Suppose
\begin{equation}
\|R\|_\infty < \infty ,\quad \|\nabla {\Ric}^\sharp_\psi + d^\star R -2R(\n \psi)\|_\infty < \infty
\end{equation}
and that $f$ is a solution to the Stein equation \eqref{eq:stein}. Then there exists a positive constant $c_1$ such that $\|\n d f\|_\infty \leq  c_1 \|\n h\|_\infty$.
\end{proposition}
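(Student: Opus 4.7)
I would mirror the proof of Proposition~\ref{prop:steinest1}, substituting a Hessian estimate for the gradient estimate used there. Since $\Ric_\psi \geq 2K>0$, Theorem~\ref{thm:exponentialconv} yields exponential convergence of $P_t^\ast\delta_x$ to $\mu_\psi$, so the Poisson representation \eqref{eq:poissoln} is valid, and differentiating twice under the integral (justified once the integrand is uniformly dominated, as below) gives
\begin{equation}
(\n d f)_x = -\int_0^\infty (\n d P_t h)_x \, dt.
\end{equation}
The proposition therefore reduces to producing a function $\Phi \in L^1((0,\infty))$ such that
\begin{equation}
\|\n d P_t h\|_\infty \leq \Phi(t)\,\|\n h\|_\infty \qquad (t>0),
\end{equation}
after which we can set $c_1 := \int_0^\infty \Phi(t)\,dt$.

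\textbf{Key steps.} The required Hessian estimate is precisely the type of bound developed in Section~\ref{sec:derests}, so the plan is to appeal to the appropriate result there, analogously to how the proof of Proposition~\ref{prop:steinest1} invokes Theorem~\ref{thm:estone}. The expected form of the bound is
\begin{equation}
\|\n d P_t h\|_\infty \leq \frac{c\,e^{-Kt}}{\sqrt{t\wedge 1}}\,\|\n h\|_\infty,
\end{equation}
which is integrable on $(0,\infty)$: the $t^{-1/2}$ singularity at zero is integrable, and the exponential factor controls the tail. The $e^{-Kt}$ decay is obtained by splitting $P_t = P_{t/2}\circ P_{t/2}$ and applying item (ii) of Theorem~\ref{thm:exponentialconv} to one of the factors; the $1/\sqrt{t\wedge 1}$ short-time behaviour comes from a Bismut-type Hessian formula applied to the bounded, once-differentiable function $P_{t/2}h$, in which one derivative is absorbed by damped parallel transport acting on $\n P_{t/2}h$ and the other is produced by a Malliavin weight scaling like $1/\sqrt{t}$. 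Once the bound is in hand, dominated convergence legitimises the differentiation under the integral and integration in $t$ finishes the proof.

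\textbf{Main obstacle.} The work behind the scenes is the Hessian Bismut formula itself, which is exactly where the curvature hypotheses $\|R\|_\infty<\infty$ and $\|\n\Ric_\psi^\sharp + d^\star R - 2R(\n\psi)\|_\infty<\infty$ enter. Commuting a second covariant derivative through the semigroup requires the Weitzenb\"ock identity on $1$-forms (producing a $\Ric_\psi$ correction) and one further covariant derivative of it, the latter producing exactly the tensor $\n\Ric_\psi^\sharp + d^\star R - 2R(\n\psi)$, while the algebraic curvature terms generated along the way are controlled by $\|R\|_\infty$. The delicate point is arranging the stochastic representation so that the resulting weight delivers the $1/\sqrt{t}$ rather than $1/t$ short-time blow-up, since only the former is integrable at zero; this is the reason for the semigroup-splitting trick rather than applying a Hessian formula directly to $P_t h$.
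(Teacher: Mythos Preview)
Your proposal is correct and follows the same overall route as the paper: invoke the Poisson representation \eqref{eq:poissoln}, apply the Hessian bound from Section~\ref{sec:derests} (this is precisely Theorem~\ref{thm:esttwo}, which gives $\|\nabla d P_t h\|_\infty \le C_1 (1\wedge t)^{-1/2} e^{-Kt}\|\nabla h\|_\infty$), and integrate in $t$.

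One small point on your ``main obstacle'' paragraph: the semigroup-splitting trick $P_t = P_{t/2}\circ P_{t/2}$ is not needed. The paper obtains the $1/\sqrt{t}$ short-time rate \emph{directly} from the Bismut-type Hessian formula of Theorem~\ref{thm:locformtwo}, which already expresses $\nabla d P_t h$ in terms of $\nabla h$ (not $h$) after a single integration by parts; the $1/\sqrt{t}$ then comes from the It\^o isometry applied to the Malliavin weight $\int_0^t \langle W_s(\dot k_s u),//_s dB_s\rangle$ with $\dot k_s = -1/(1\wedge t)$. A $1/t$ blow-up would only appear if one integrated by parts \emph{twice} to remove all derivatives from $h$, which is unnecessary here. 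So your mechanism works, but the paper's is more direct and explains transparently why only the two curvature bounds $\|R\|_\infty$ and $\|\nabla\Ric_\psi^\sharp + d^\star R - 2R(\nabla\psi)\|_\infty$ are needed: they are exactly the coefficients appearing in the definition of $W'_s$.
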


\begin{proof}
This follows from Theorem \ref{thm:esttwo} since that theorem implies
\begin{equation}
|\n d f|(x) \leq \int_0^\infty |\n d P_th|(x) dt \leq C_1 \|\n h\|_\infty  \int_0^\infty \frac{1}{\sqrt{t \wedge 1}}e^{-Kt}dt = c_1 \|\n h\|_\infty
\end{equation}
by formula \eqref{eq:poissoln}, as required.
\end{proof}

\begin{theorem}\label{thm:mainthmone}
Suppose $K>0$ with ${\Ric}_\psi \geq 2K$. Suppose
\begin{equation}
\begin{aligned}
&\|R\|_\infty < \infty,\\
&\|\n R\|_\infty < \infty,
\end{aligned}\quad
\begin{aligned}
&\|\nabla {\Ric}^\sharp_\psi + d^\star R -2R(\n \psi)\|_\infty < \infty,\\
&\|\n(\nabla {\Ric}^\sharp_\psi + d^\star R -2R(\n \psi))\|_\infty < \infty.\\
\end{aligned}
\end{equation}
Then there exists a positive constant $C$ such that for all $\lambda >0$ and all pairs $W,W'$ of identically distributed random variables taking values in $M$ and satisfying (A0), with $R_1$ and $R_2$ defined by (A1) and (A2), we have
\begin{equation}
\mathcal{W}(\mathcal{L}(W),\mu_\psi) \leq C \left(\frac{1}{\lambda}\E[|\delta|^3(|\log|\delta||\vee 1)] + \E[|R_1|] + \E[|R_2|]\right)
\end{equation}
where $|\delta| = d(W,W')$.
\end{theorem}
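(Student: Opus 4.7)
The plan is to reduce the Wasserstein distance to the three expectations appearing in the statement by using the Stein machinery set up above. By Kantorovich--Rubinstein duality, $\W(\mathcal{L}(W),\mu_\psi)=\sup_{h\in\Lip_1(M)}|\E[h(W)]-\mu_\psi(h)|$, so after a routine smoothing argument one may fix a smooth $h$ with $\|\n h\|_\infty\le 1$. Let $f$ be the Stein solution given by \eqref{eq:poissoln}, so that $\E[h(W)]-\mu_\psi(h)=\E[\A f(W)]$, and invoke the identity \eqref{eq:steinrear} with the triangle inequality to obtain
$$\lambda\bigl|\E[\A f(W)]\bigr|\le \bigl|\E[(df)_W(\delta)]+\tfrac12\E[(\n df)_W(\delta,\delta)]\bigr|+\lambda\|\n f\|_\infty\E[|R_1|]+\lambda\|\n df\|_\infty\E[|R_2|].$$
Propositions \ref{prop:steinest1} and \ref{prop:steinest2} bound $\|\n f\|_\infty$ and $\|\n df\|_\infty$ by absolute constants, so these last two summands will provide the $\E[|R_1|]$ and $\E[|R_2|]$ contributions after dividing through by $\lambda$.

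The remaining task is to bound the bracketed quantity. The naive approach---third-order Taylor of $f$ along the minimizing geodesic joining $W$ and $W'$, combined with $\E[f(W')-f(W)]=0$---fails at first sight because $\|\n\n df\|_\infty$ need not be finite: one has $-\n\n df=\int_0^\infty \n\n dP_t h\,dt$, whose integrand behaves like $(t\wedge 1)^{-1}e^{-Kt}\|\n h\|_\infty$ and diverges logarithmically at $t=0^+$. I would get around this by splitting $f$ pathwise: for each $\omega$ set $\tau=\tau_\omega:=|\delta|^2\wedge 1$ and write $f=f_{1,\tau}+f_{2,\tau}$ with
$$f_{1,\tau}(x):=-\int_0^{\tau}\bigl(P_t h(x)-\mu_\psi(h)\bigr)\,dt,\qquad f_{2,\tau}(x):=-\int_{\tau}^{\infty}\bigl(P_t h(x)-\mu_\psi(h)\bigr)\,dt.$$
Apply to $f_{1,\tau}$ the second-order Taylor expansion of Section \ref{sec:4} (integral remainder in $\n df_{1,\tau}$) and to $f_{2,\tau}$ the third-order one (integral remainder in $\n\n df_{2,\tau}$). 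Adding, using $\E[f(W')-f(W)]=0$, and rearranging gives
$$\E[(df)_W(\delta)]+\tfrac12\E[(\n df)_W(\delta,\delta)]=\tfrac12\E[(\n df_{1,\tau})_W(\delta,\delta)]-\E[\mathrm{Rem}_1]-\tfrac12\E[\mathrm{Rem}_2],$$
where $\mathrm{Rem}_i$ denote the respective Taylor remainders.

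Each term on the right is then controlled pathwise using the semigroup derivative estimates of Section \ref{sec:derests}. The bound $\|\n dP_t h\|_\infty\lesssim (t\wedge 1)^{-1/2}e^{-Kt}$ yields $\|\n df_{1,\tau}\|_\infty\lesssim \sqrt{\tau}$, so both $|\tfrac12(\n df_{1,\tau})_W(\delta,\delta)|$ and $|\mathrm{Rem}_1|$ are pointwise of order $\sqrt{\tau_\omega}\,|\delta|^2$; the bound $\|\n\n dP_t h\|_\infty\lesssim (t\wedge 1)^{-1}e^{-Kt}$ yields $\|\n\n df_{2,\tau}\|_\infty\lesssim |\log\tau|\vee 1$, so $|\mathrm{Rem}_2|$ is of order $(|\log\tau_\omega|\vee 1)\,|\delta|^3$. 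The choice $\tau_\omega=|\delta|^2\wedge 1$ balances them, since $\sqrt{\tau_\omega}\,|\delta|^2\le |\delta|^3$ and $|\log\tau_\omega|\vee 1\le 2(|\log|\delta||\vee 1)$, giving the unified pathwise estimate $C\,|\delta|^3(|\log|\delta||\vee 1)$. Assembling and dividing by $\lambda$ produces the stated inequality after taking a supremum over $h$.

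The main obstacle is precisely this logarithmic divergence at $t=0^+$, which is what forces the pathwise cutoff construction and produces the $(|\log|\delta||\vee 1)$ factor. It is also where the full set of curvature hypotheses enters: the uniform bound on $\|\n\n dP_t h\|_\infty$ demands a Bismut-type representation of the third derivative of the semigroup, whose estimation requires control of all of $\|R\|_\infty$, $\|\n R\|_\infty$, $\|\n\Ric^\sharp_\psi+d^\star R-2R(\n\psi)\|_\infty$ and $\|\n(\n\Ric^\sharp_\psi+d^\star R-2R(\n\psi))\|_\infty$, as supplied by the assumptions of the theorem. Once those estimates are in hand, the assembly above is essentially routine.
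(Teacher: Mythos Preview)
Your proposal is correct and follows essentially the same route as the paper: solve the Stein equation via the semigroup representation, split $f$ at the $\omega$-dependent time $|\delta|^2$ into a short-time and a long-time piece, apply a second-order Taylor remainder to the former and a third-order remainder to the latter, and estimate each using the semigroup derivative bounds $\|\n dP_th\|_\infty\lesssim (t\wedge1)^{-1/2}e^{-Kt}$ and $\|\n\n dP_th\|_\infty\lesssim (t\wedge1)^{-1}e^{-Kt}$. The only cosmetic differences are that the paper writes the second-order remainder in the ``subtract the base-point Hessian'' form of Section~\ref{sec:4} (so no separate $\tfrac12(\n df_{1,\tau})_W(\delta,\delta)$ term appears) and uses the cutoff $|\delta|^2$ rather than $|\delta|^2\wedge1$; neither change affects the argument.
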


\begin{proof}
Suppose $f$ solves the Stein equation \eqref{eq:stein} for a function $h \in C^1_b(M)$. Then it follows that $f$ is three times differentiable. We will later pass to Lipschitz functions by approximation. For any pair of points $(w,w') \not\in \Cut$ set
\begin{equation}
f_1 := -\int_0^{|\delta|^2} P_th + \mu_\psi(h) dt, \quad f_2 := -\int_{|\delta|^2}^\infty P_th + \mu_\psi(h) dt
\end{equation}
with $\delta \in T_wM$ defined as above. By formula \eqref{eq:poissoln} and Taylor expansion it follows that $f = f_1 + f_2$ with
\begin{align}
f(w')-f(w) =\,& (df)_w(\delta) + \frac{1}{2}(\n df)_{w} (\delta, \delta) \\
& + \int_0^1 ((//_s^{\otimes 2})^{-1}(\n df_1)_{\gamma(s)}- (\n df_1)_{w}) (\delta, \delta)(1-s)ds\\
& + \frac{1}{2}\int_0^1 (\n \n df_2)_{\gamma(s)} (\dot{\gamma}(s),\dot{\gamma}(s),\dot{\gamma}(s))(1-s)^2 ds.
\end{align}
In particular, since $W$ and $W'$ are identically distributed we have
\begin{align}
0 =\,& \E[(df)_W(\delta)] + \frac{1}{2}\E[(\n df)_{W} (\delta, \delta)] \\
& + \int_0^1 \E[((//_s^{\otimes 2})^{-1}(\n df_1)_{\gamma(s)}- (\n df_1)_{W}) (\delta, \delta)](1-s)ds\\
& + \frac{1}{2}\int_0^1 \E[(\n \n df_2)_{\gamma(s)} (\dot{\gamma}(s),\dot{\gamma}(s),\dot{\gamma}(s))](1-s)^2 ds.
\end{align}
Therefore, by \eqref{eq:steinrear}, we have
\begin{align}
&|\E[h(W)] - \mu_\psi(h)|\\[2mm]
=\,& |\E[ \mathcal{A}f(W)]|\\[2mm]
=\,&\E[|(df)_W(R_1)|]+ \E[|(\n d f)_W(R_2)|]+\tfrac{1}{\lambda}|\E[(df)_W(\delta)] + \tfrac{1}{2} \E[(\n d f)_W(\delta,\delta)]|\\[2mm]
\leq \,&  \|df\|_\infty \E[|R_1|] + \| \n d f\|_\infty \E[|R_2|]\\[2mm]
&+ \frac{1}{\lambda}\int_0^1 |\E[((//_s^{\otimes 2})^{-1}(\n df_1)_{\gamma(s)}- (\n df_1)_{W}) (\delta, \delta)]|(1-s)ds\\
& + \frac{1}{2\lambda}\int_0^1 |\E[(\n \n df_2)_{\gamma(s)} (\dot{\gamma}(s),\dot{\gamma}(s),\dot{\gamma}(s))]|(1-s)^2 ds\\
\leq \,&  \|df\|_\infty \E[|R_1|] + \| \n d f\|_\infty \E[|R_2|] + \frac{1}{\lambda}\E[|\delta|^2 \| \n df_1\|_\infty] + \frac{1}{6\lambda}\E[|\delta|^3 \|\n \n df_2\|_\infty]
\end{align}
and it suffices to bound the derivatives. By Theorems \ref{thm:esttwo} and \ref{thm:estthree} with $Z = \n \psi$, together with formula \eqref{eq:poissoln}, it follows that there exist positive constant $c_2$ and $c_3$ such that
\begin{align}
|\n d f_1|(x) \leq\,& \int_0^{|\delta|^2} |\n d P_th|(x) dt\\
\leq\,& C_1 \|\n h\|_\infty  \int_0^{|\delta|^2} \frac{1}{\sqrt{t \wedge 1}}e^{-Kt}dt= c_2 \|\n h\|_\infty (|\delta| \wedge 1)
\end{align}
and similarly that
\begin{align}
|\n \n df_2|(x) \leq\,& \int_{|\delta|^2}^\infty |\n\n dP_th|(x)dt\\
\leq\,& C_2 \|\n h\|_\infty \int_{|\delta|^2}^\infty \frac{1}{t \wedge 1}e^{-Kt}dt
= c_3 \|\n h\|_\infty (|\log |\delta|| \vee 1) e^{-K (|\delta|\vee 1)^2}.
\end{align}
Consequently, by Propositions \ref{prop:steinest1} and \ref{prop:steinest2}, we have
\begin{align}
&|\E[h(W)] - \mu_\psi(h)|\\[2mm]
\leq\,& \|\n h\|_\infty \bigg( \frac{1}{K} \E[|R_1|] + c_1 \E[|R_2|]\\
& +\frac{c_2}{\lambda} \E[|\delta|^2 (|\delta|\wedge 1)] + \frac{c_3}{6\lambda} \E[|\delta|^3(|\log |\delta|| \vee 1) e^{-K (|\delta|\vee 1)^2}]\bigg)
\end{align}
from which it follows that
\begin{equation}
|\E[h(W)] - \mu_\psi(h)| \leq C \|\n h\|_\infty ( \E[|R_1|] + \E[|R_2|] + \frac{1}{\lambda}\E[|\delta|^3(|\log|\delta||\vee 1)])
\end{equation}
where $|\delta| = d(W,W')$. Now, by \cite[Theorem~1]{AzagraFerreraLopezMesasRangel2007} it follows that for every Lipschitz function $h$ on a Riemannian manifold $M$, for every $\epsilon,r >0$, there exists a function $\tilde{h}\in C^\infty(M)$ such that $| h - \tilde{h}|_\infty < \epsilon$ with $|\n \tilde{h}| \leq \Lip(h) + r$. Therefore, if $h \in {\Lip}_1(M)$ then
\begin{align}
&|\E[h(W)] - \mu_\psi(h)|\\[2mm]
\leq\,& |\E[(h-\tilde{h})(W)]| + |\E[\tilde{h}(W)] - \mu_\psi(\tilde{h})|\\[2mm]
& + |\mu_{\psi}(\tilde{h}) - \mu_{\psi}(h)|\\[2mm]
\leq\,& 2\|h - \tilde{h}\|_\infty + |\E[\tilde{h}(W)] - \mu_\psi(\tilde{h})|\\[2mm]
\leq\,& 2\epsilon + C(1+r)( \E[|R_1|] + \E[|R_2|] + \frac{1}{\lambda}\E[|\delta|^3(|\log|\delta||\vee 1)])
\end{align}
for all $\epsilon,r >0$, from which the result follows.
\end{proof}

\begin{proposition}\label{prop:steinest3}
Suppose $K > 0$ with $\Ric_\psi \geq 2K$ and $h \in C^2_b(M)$. Suppose
\begin{equation}
\begin{aligned}
&\|R\|_\infty < \infty,\\
&\|\n R\|_\infty < \infty,
\end{aligned}\quad
\begin{aligned}
&\|\nabla {\Ric}^\sharp_\psi + d^\star R -2R(\n \psi)\|_\infty < \infty,\\
&\|\n(\nabla {\Ric}^\sharp_\psi + d^\star R -2R(\n \psi))\|_\infty < \infty\\
\end{aligned}
\end{equation}
and that $f$ a solution to the Stein equation \eqref{eq:stein}. Then there exists a positive constant $c_4$ such that $\|\n\n d f\| \leq  c_4 (\|\n h\|_\infty + \|\Hess h\|_\infty)$.
\end{proposition}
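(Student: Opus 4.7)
The plan is to follow the template of Proposition~\ref{prop:steinest2}. By formula~\eqref{eq:poissoln}, differentiating under the integral sign yields
\begin{equation}
|\n \n df|(x) \leq \int_0^\infty |\n \n d P_t h|(x)\, dt,
\end{equation}
so the task reduces to producing a bound on $|\n \n d P_t h|(x)$ that is integrable in $t$ near zero. The bound supplied by Theorem~\ref{thm:estthree} and used in the proof of Theorem~\ref{thm:mainthmone}, namely $|\n \n d P_t h|(x) \leq C_2 \|\n h\|_\infty (t\wedge 1)^{-1} e^{-Kt}$, is not integrable near $t=0$; this is precisely what forced the $\log|\delta|$ factor to appear there. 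Under the stronger hypothesis $h \in C^2_b(M)$, however, I would aim for the sharper estimate
\begin{equation}
|\n \n d P_t h|(x) \leq C \bigl(\|\n h\|_\infty + \|\Hess h\|_\infty\bigr) (t \wedge 1)^{-1/2} e^{-Kt},
\end{equation}
which is integrable, yielding the claim with $c_4 = C \int_0^\infty (t\wedge 1)^{-1/2} e^{-Kt}\, dt < \infty$.

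To derive this refined bound, I would revisit the Bismut-type derivative formula for $\n \n d P_t h$ to be established in Section~\ref{sec:11}, but perform only two integrations by parts instead of all three. The remaining derivative then acts directly on $h$: this trades the factor $\|\n h\|_\infty \cdot (t\wedge 1)^{-1/2}$ in the bound for a factor $\|\Hess h\|_\infty$ with no corresponding $t$-singularity, while the two surviving stochastic integrals contribute a single $(t\wedge 1)^{-1/2}$ factor rather than $(t\wedge 1)^{-1}$. The exponential decay $e^{-Kt}$ arises, exactly as in Theorem~\ref{thm:estthree}, from the contraction of the damped derivative flow under ${\Ric}_\psi \geq 2K$, while the boundedness hypotheses on $R$, $\n R$ and the Bakry-Emery-Ricci corrections --- the same as in Theorem~\ref{thm:mainthmone} --- will be needed to control the drift and curvature terms that appear along the way.

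The main obstacle is the bookkeeping: one has to identify precisely which curvature tensors survive in the two-integration-by-parts version of the formula, and verify that their sup-norms are controlled by the hypothesised quantities. Given the commutation relations to be developed in Sections~\ref{sec:comm} and~\ref{sec:10} and the derivative formulas of Section~\ref{sec:11}, this should amount to a direct modification of the argument proving Theorem~\ref{thm:estthree}, simplified by the fact that one fewer stochastic integral needs to be estimated.
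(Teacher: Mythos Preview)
Your approach is correct and is essentially the one the paper takes: the bound you are aiming for,
\[
\|\n\n d P_t h\|_\infty \leq \frac{C_3\, e^{-Kt}}{\sqrt{t\wedge 1}}\bigl(\|\n h\|_\infty + \|\Hess h\|_\infty\bigr),
\]
is precisely the second part of Theorem~\ref{thm:estthree}, and the paper's proof of Proposition~\ref{prop:steinest3} simply cites that together with formula~\eqref{eq:poissoln}.

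One small correction on the bookkeeping: the derivative formula underlying that estimate is Theorem~\ref{thm:secondderhess}, which performs \emph{one} integration by parts (introducing the single Cameron--Martin process $k$), not two. The $C^1_b$ version (Theorem~\ref{thm:locformthree}) uses two such processes $k,l$, and the raw formula (Theorem~\ref{thm:thirdderform}) uses none. So the trade-off you describe is accurate, but the count is one-versus-two rather than two-versus-three, and there is only one surviving stochastic integral, contributing the single factor of $(t\wedge 1)^{-1/2}$.
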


\begin{proof}
This follows from the second part of Theorem \ref{thm:estthree} and formula \eqref{eq:poissoln}.
\end{proof}

Now define
\begin{equation}
\mathcal{H}:= \{h \in C_b^2(M): |\n h| \leq 1, |\Hess h| \leq 1\}
\end{equation}
and for probability measures $\mu$ and $\nu$, define the distance
\begin{equation}\label{eq:ddist}
d_{\mathcal{H}}(\mu, \nu) := \sup \{ |\mu(h)- \nu(h)|: h \in \mathcal{H}\}
\end{equation}
whenever the right-hand side exists.

\begin{theorem}\label{thm:mainthmtwo}
Suppose $K>0$ with ${\Ric}_\psi \geq 2K$. Suppose
\begin{equation}
\begin{aligned}
&\|R\|_\infty < \infty,\\
&\|\n R\|_\infty < \infty,
\end{aligned}\quad
\begin{aligned}
&\|\nabla {\Ric}^\sharp_\psi + d^\star R -2R(\n \psi)\|_\infty < \infty,\\
&\|\n(\nabla {\Ric}^\sharp_\psi + d^\star R -2R(\n \psi))\|_\infty < \infty.\\
\end{aligned}
\end{equation}
Then there exists a positive constant $C$ such that for all $\lambda >0$ and all pairs $W,W'$ of identically distributed random variables taking values in $M$ and satisfying (A0), with $R_1$ and $R_2$ defined by (A1) and (A2), we have
\begin{equation}
d_{\mathcal{H}}(\mathcal{L}(W),\mu_\psi) \leq C \left(\frac{1}{\lambda}\E[d^3(W,W')] + \E[|R_1|] + \E[|R_2|]\right)
\end{equation}
where the distance $d_{\mathcal{H}}$ is defined by \eqref{eq:ddist}.
\end{theorem}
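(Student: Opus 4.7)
The plan is to follow essentially the same route as Theorem \ref{thm:mainthmone}, but to exploit the fact that on the smaller test class $\mathcal{H}$ we have direct access to a uniform bound on the third derivative of the Stein solution via Proposition \ref{prop:steinest3}. This means the splitting $f = f_1 + f_2$ used in Theorem \ref{thm:mainthmone} to manage the near-zero blow-up of $\|\n\n df\|$ in the Wasserstein case can be avoided entirely, and the $|\log |\delta||$ factor on the right-hand side disappears.

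Concretely, for $h \in \mathcal{H}$ I would take $f$ as defined by \eqref{eq:poissoln} and apply the third-order Taylor expansion from Section \ref{sec:4} along the minimizing geodesic connecting $W$ and $W'$. Taking expectations and using that $\mathcal{L}(W) = \mathcal{L}(W')$ then yields a vanishing identity relating $\E[(df)_W(\delta)] + \tfrac{1}{2}\E[(\n df)_W(\delta,\delta)]$ to the remainder integral involving $\n\n df$. Substituting this into \eqref{eq:steinrear}, taking absolute values and using $\tfrac{1}{2}\int_0^1 (1-s)^2 ds = \tfrac{1}{6}$ produces the bound
\begin{equation}
|\E[h(W)] - \mu_\psi(h)| \leq \|df\|_\infty \E[|R_1|] + \|\n df\|_\infty \E[|R_2|] + \tfrac{1}{6\lambda}\|\n\n df\|_\infty \E[d^3(W, W')].
\end{equation}

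The three derivative norms are then controlled by Propositions \ref{prop:steinest1}, \ref{prop:steinest2} and \ref{prop:steinest3}, each of which bounds the relevant norm by a constant multiple of $\|\n h\|_\infty + \|\Hess h\|_\infty \leq 2$ for $h \in \mathcal{H}$. Absorbing all constants into a single $C$ and taking the supremum over $h \in \mathcal{H}$ then yields the stated $d_\mathcal{H}$-estimate. In contrast to Theorem \ref{thm:mainthmone}, no mollification argument is required, since $\mathcal{H}$ already consists of smooth functions with the right bound on the first derivative, so there is nothing extra to gain by approximation.

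The main subtlety, which I would check most carefully, is justifying that $f$ is genuinely $C^3$ under the curvature assumptions so that the third-order Taylor expansion is applicable, and that differentiation under the integral sign in \eqref{eq:poissoln} is legitimate. Both issues are handled by the pointwise derivative estimate on $\|\n\n dP_t h\|_\infty$ supplied by Theorem \ref{thm:estthree}, combined with the exponential decay $e^{-Kt}$ guaranteed by $\Ric_\psi \geq 2K$ through Theorem \ref{thm:exponentialconv}; once dominated convergence is in place, the remainder of the argument is a routine combination of the three derivative estimates and the identity \eqref{eq:steinrear}.
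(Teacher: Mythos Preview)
Your proposal is correct and matches the paper's own approach: the paper's proof of Theorem \ref{thm:mainthmtwo} is a one-line remark that the argument follows Theorem \ref{thm:mainthmone} but uses Proposition \ref{prop:steinest3} to estimate the third derivative directly, which is precisely what you describe (avoiding the $f_1+f_2$ splitting and the mollification step). One minor imprecision: $\mathcal{H}$ consists of $C^2_b$ functions rather than smooth ones, but this is harmless since Proposition \ref{prop:steinest3} only requires $h \in C^2_b(M)$ and the semigroup representation \eqref{eq:poissoln} smooths $f$ sufficiently for the third-order Taylor expansion.
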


\begin{proof}
The proof follows similar lines to that of Theorem \ref{thm:mainthmone}, except that simply we use Proposition \ref{prop:steinest3} to estimate the third derivative.
\end{proof}

Note that the constants appearing in these theorems can all be made explicit, since the estimates derived in Section \ref{sec:derests} can be so too.

\section{Uniform approximation}\label{sec:7}

Now let us consider the problem of uniform approximation on a compact manifold. In this case the curvature assumption $\Ric \geq 2K>0$ can be dispensed with, although we will, for simplicity assume $\psi = 0$. We will denote by $\bar{\mu}$ the normalized volume measure, meaning
\begin{equation}
d \bar{\mu} = \frac{1}{\mu(M)} d\mu.
\end{equation}
All $L^p$-norms $\|\cdot \|_p$ will be defined with respect to the probability measure $\bar{\mu}$. Note that for a smooth function $f$ the harmonic projection $Hf$ of $f$ is given by the integral
\begin{equation}
Hf := \bar{\mu}(f) = \int_M f d\bar{\mu}.
\end{equation}
With this in mind we set
\begin{equation}\label{eq:rayleigh}
\lambda:= \min \bigg\lbrace \frac{\frac{1}{2}\| \nabla f\|_2^2}{\|f\|_2^2} : f \in C^\infty(M), f \neq 0, Hf = 0\bigg\rbrace.
\end{equation}
The point here is that $\lambda>0$ since $M$ is compact. This otherwise need not be the case, even if $M$ is of finite volume. The quantity $\lambda$ coincides with the first positive eigenvalue of the operator $-\frac{1}{2}\Delta$. This quantity is known as the \textit{spectral gap}. For the following theorem, note that if $U$ is a uniformly distributed $M$-valued random variable with $f$ a bounded measurable function then
\begin{equation}
\Var[f(U)] = \int_M f^2 d\bar{\mu} - \left(\int_M f d\bar{\mu}\right)^2,
\end{equation}
where by $\Var[f(U)]$ we mean the variance of the real-valued random variable $f(U)$.

\begin{theorem}\label{thm:compact}
Suppose $M$ is a compact Riemannian manifold with $U$ a uniformly distributed $M$-valued random variable. Then
\begin{align}
\|(P_t - H)f\|_2 \leq e^{-\lambda t} \sqrt{\Var[f(U)]}
\end{align}
for all $t \geq 0$ and bounded measurable $f$.
\end{theorem}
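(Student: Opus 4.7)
The plan is to exploit the self-adjointness of $P_t$ on $L^2(\bar\mu)$ together with the variational characterisation \eqref{eq:rayleigh} of $\lambda$. Since $\bar\mu$ is invariant for $P_t$ and $P_t 1 = 1$, the harmonic projection $H$ commutes with $P_t$, so setting $g := f - Hf$ we obtain $(P_t - H)f = P_t g$, and
\begin{equation}
\|g\|_2^2 = \bar\mu(f^2) - (\bar\mu(f))^2 = \Var[f(U)].
\end{equation}
It therefore suffices to prove the contraction $\|P_t g\|_2 \leq e^{-\lambda t}\|g\|_2$ for every $g \in L^2(\bar\mu)$ with $Hg = 0$.

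I would first establish this for smooth $g$ with $Hg = 0$. Invariance of $\bar\mu$ gives $H(P_t g) = 0$ for all $t \geq 0$. Differentiating under the integral and applying the defining integration-by-parts identity \eqref{eq:laplacian} of $\Delta$ yields
\begin{equation}
\frac{d}{dt}\|P_t g\|_2^2 = 2\bigl\langle P_t g, \tfrac{1}{2}\Delta P_t g\bigr\rangle_{L^2(\bar\mu)} = -\|\nabla P_t g\|_2^2.
\end{equation}
Because $P_t g$ is smooth (the heat semigroup is smoothing on a compact manifold) and mean-zero, the Rayleigh characterisation of $\lambda$ gives $\tfrac{1}{2}\|\nabla P_t g\|_2^2 \geq \lambda\|P_t g\|_2^2$, so $\tfrac{d}{dt}\|P_t g\|_2^2 \leq -2\lambda\|P_t g\|_2^2$, and Gronwall's inequality delivers the desired bound.

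To pass from smooth to merely bounded measurable data I would use the smoothing and mean-preserving properties of the heat kernel: for any $s > 0$ the function $P_s g$ is smooth with $H(P_s g) = 0$, hence the semigroup property combined with the smooth case yields
\begin{equation}
\|P_{t+s} g\|_2 \leq e^{-\lambda t}\|P_s g\|_2,
\end{equation}
and letting $s \downarrow 0$ via the strong $L^2$-continuity of $P_s$ at the origin recovers the estimate for $g$. The only real obstacle is the regularity needed to differentiate $\|P_t g\|_2^2$ and to invoke \eqref{eq:laplacian}; this is handled entirely by first restricting to smooth data with $t > 0$ and then approximating as above, so the argument reduces to bookkeeping once the variational inequality is applied.
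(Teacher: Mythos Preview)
Your argument is correct and follows essentially the same route as the paper: both differentiate the squared $L^2$-norm, apply the integration-by-parts identity for $\Delta$, invoke the Rayleigh characterisation of $\lambda$, and finish with Gronwall. The only cosmetic difference is that the paper works directly with $(P_t - H)f$ for $t>0$ (which is already smooth by the smoothing property, so no separate approximation step is needed), whereas you first reduce to smooth mean-zero $g$ and then approximate; both are fine.
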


\begin{proof}
First note for $t>0$ that $P_tf$ is smooth with
\begin{equation}
\int_M (P_t -H)f d\bar{\mu} = 0
\end{equation}
and furthermore
\begin{align}
\frac{d}{dt} \| (P_t-H)f\|^2_2 =\,& 2 \int_M (P_t -H)f \frac{d}{dt}P_t f d\bar{\mu}\\
=\,& \int_M (P_t -H)f \Delta P_t f d\bar{\mu}\\
=\,& -\int_M |\nabla P_t f|^2 d\bar{\mu}\\
=\,& -\int_M |\nabla(P_t-H)f|^2 d\bar{\mu}\\
=\,& - \| \nabla (P_t-H)f\|^2_2.
\end{align}
Therefore, by the definition of $\lambda$ and Gronwall's inequality, we see that
\begin{align}
\|(P_t - H)f\|_2^2 \leq e^{-2\lambda t} \|f - Hf\|_2^2 = e^{-2\lambda t} \Var[f(U)]
\end{align}
as required.
\end{proof}

The Rayleigh characterization of $\lambda$, given above by \eqref{eq:rayleigh}, implies that
\begin{equation}\label{eq:poincare}
\|f - Hf\|_2^2 \leq \tfrac{1}{2\lambda}\|\n f\|_2^2
\end{equation}
for any $f$ belonging to the Sobolev space $H_1^2(M)$. Inequality \eqref{eq:poincare} is a particular case of the \textit{Poincar\'{e} inequality}. Together with Theorem \ref{thm:compact} this immediately yields the following corollary:

\begin{corollary}\label{cor:poincareapp}
For all $f\in C^1(M)$ we have
\begin{equation}
\|(P_t - H)f\|_2 \leq e^{-\lambda t} \sqrt{\tfrac{1}{2\lambda}}\|\n f\|_2
\end{equation}
for all $t \geq 0$.
\end{corollary}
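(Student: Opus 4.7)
The proof is essentially a two-line chain of inequalities combining the two stated results, so my plan is to lay out that chain and indicate where each factor comes from.

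First, I would observe that for a uniformly distributed $M$-valued random variable $U$, the variance appearing in Theorem \ref{thm:compact} admits the simple $L^2$-reformulation
\begin{equation}
\Var[f(U)] = \int_M f^2\, d\bar\mu - \Big(\int_M f\, d\bar\mu\Big)^2 = \|f - Hf\|_2^2,
\end{equation}
since $Hf = \bar\mu(f)$ is a constant and the cross term in the expansion of $\|f-Hf\|_2^2$ matches this identity. This is just rewriting the variance in the notation used in the previous displays.

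Next, I would apply Theorem \ref{thm:compact} to the function $f \in C^1(M)$ (which is bounded and measurable because $M$ is compact), yielding
\begin{equation}
\|(P_t - H)f\|_2 \leq e^{-\lambda t}\sqrt{\Var[f(U)]} = e^{-\lambda t}\|f - Hf\|_2.
\end{equation}
Finally, the Poincaré inequality \eqref{eq:poincare} applies to $f \in C^1(M) \subset H_1^2(M)$ (again using compactness of $M$ to ensure $f$ and $\n f$ are square-integrable against $\bar\mu$) and gives $\|f - Hf\|_2 \leq \sqrt{\tfrac{1}{2\lambda}}\,\|\n f\|_2$. Chaining these two estimates produces the claimed bound.

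There is no real obstacle here; the only minor point worth flagging is the justification that $f \in C^1(M)$ lies in the Sobolev space $H^2_1(M)$ so that \eqref{eq:poincare} is applicable, which follows immediately from compactness of $M$ (both $f$ and $\n f$ are continuous on a compact space, hence bounded, hence in $L^2(\bar\mu)$). No extra approximation or density argument is needed.
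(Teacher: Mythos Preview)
Your proof is correct and follows exactly the approach indicated in the paper: apply Theorem~\ref{thm:compact} to obtain $\|(P_t-H)f\|_2 \le e^{-\lambda t}\|f-Hf\|_2$, and then bound the right-hand side using the Poincar\'e inequality~\eqref{eq:poincare}.
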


Now, the integral kernel $p: (0,\infty) \times M \times M \rightarrow \R$ of the heat semigroup $\lbrace P_t:t \geq 0\rbrace$ is called the \textit{heat kernel}. It is the fundamental solution to the heat equation and coincides with the transition densities of Brownian motion, in the sense that if $X(x)$ is a Brownian motion on $M$ starting at $x$ with $A$ a Borel subset of $M$ then
\begin{equation}
\P\lbrace X_t(x) \in A\rbrace = \int_A p_t(x,y)d\mu(y)
\end{equation}
for each $t>0$. The heat kernel is a smooth, strictly positive function, symmetric in its space variables, which due the stochastic completeness of compact manifolds satisfies the property
\begin{equation}
\int_M p_t(x,y) d\mu(y) = 1
\end{equation}
for each $t>0$ and $x \in M$. Varadhan's asymptotic relation states for a complete Riemannian manifold that
\begin{equation}
\lim_{t\downarrow 0} t \log p_t(x,y) = -\frac{d^2(x,y)}{2}
\end{equation}
uniformly on compact subsets of $M \times M$. Indeed, since for small times the mass of the heat kernel localizes, with Riemannian manifolds being locally Euclidean, this is the sense in which the heat kernel asymptotically approximates the Euclidean kernel. For our purposes, it suffices to note that for each $m \in \N \cup \lbrace 0 \rbrace$ and $\epsilon >0$ the compactness of $M$ implies
\begin{equation}
\sup_{x \in M} \| \n^m p_\epsilon(x,\cdot)\|_2 < \infty
\end{equation}
where the covariant derivatives are taken in the first variable $x$ and the integration of the norm in the second.

\begin{proposition}\label{prop:compderestszero}
Suppose $M$ is a compact Riemannian manifold. For each $\epsilon>0$ there exists a positive constant $C_0(\epsilon)>0$ such that
\begin{align}
\|(P_t-H)f\|_\infty \leq C_0(\epsilon) e^{-\lambda t} \| \n f \|_\infty
\end{align}
for all $t>\epsilon$ and all $f\in C^1$.
\end{proposition}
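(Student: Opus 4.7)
The plan is to use the semigroup property to trade off an $L^\infty$--bound for an $L^2$--bound supplied by Corollary \ref{cor:poincareapp}, with the cost paid by the smoothing effect of the heat kernel at the fixed time $\epsilon$. Specifically, since $H$ is a projection onto constants and $P_s 1 = 1$, we have $HP_s = P_s H = H$ for every $s \geq 0$, so for $t > \epsilon$ we may write
\begin{equation}
(P_t - H)f = P_\epsilon (P_{t-\epsilon} - H)f.
\end{equation}
This is the decomposition on which everything hinges.

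\textbf{Pointwise bound via the heat kernel.} Using the heat kernel representation of $P_\epsilon$, for every $x \in M$ we have
\begin{equation}
(P_t - H)f(x) = \int_M p_\epsilon(x,y)\, (P_{t-\epsilon} - H)f(y)\, d\mu(y).
\end{equation}
Applying the Cauchy--Schwarz inequality with respect to $\mu$, and then rewriting the $L^2(\mu)$--norms in terms of the probability measure $\bar\mu = \mu / \mu(M)$, gives
\begin{equation}
|(P_t - H)f(x)| \leq \|p_\epsilon(x,\cdot)\|_{L^2(\mu)}\, \|(P_{t-\epsilon} - H)f\|_{L^2(\mu)}.
\end{equation}
The first factor is bounded in $x$ by the hypothesis recalled just before the proposition, namely $\sup_{x \in M} \|p_\epsilon(x,\cdot)\|_2 < \infty$, which uses compactness of $M$ together with smoothness and positivity of $p_\epsilon$.

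\textbf{Conclusion via the spectral gap.} For the second factor, apply Corollary \ref{cor:poincareapp} to the $C^1$ function $f$ at time $t - \epsilon > 0$ to obtain
\begin{equation}
\|(P_{t-\epsilon} - H)f\|_2 \leq e^{-\lambda(t-\epsilon)} \sqrt{\tfrac{1}{2\lambda}}\, \|\n f\|_2 \leq e^{-\lambda(t-\epsilon)} \sqrt{\tfrac{1}{2\lambda}}\, \|\n f\|_\infty,
\end{equation}
where the last step uses that $\bar\mu$ is a probability measure. Combining these estimates and absorbing the factor $e^{\lambda \epsilon}$, the volume of $M$ and $\sup_x \|p_\epsilon(x,\cdot)\|_2$ together with $\sqrt{1/(2\lambda)}$ into a single constant $C_0(\epsilon)$, we obtain
\begin{equation}
\|(P_t - H)f\|_\infty \leq C_0(\epsilon)\, e^{-\lambda t}\, \|\n f\|_\infty
\end{equation}
for all $t > \epsilon$ and $f \in C^1(M)$.

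\textbf{Main obstacle.} There is no real analytic obstruction here; the argument is a standard $L^2 \to L^\infty$ ultracontractivity--type step powered by the spectral gap. The only delicate point is bookkeeping with the normalization between $\mu$ and $\bar\mu$ to make sure the constant $C_0(\epsilon)$ is indeed finite and depends only on $\epsilon$ (and on fixed data such as $\lambda$ and $\mu(M)$), not on $t$ or $f$.
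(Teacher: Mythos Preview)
Your proof is correct and follows essentially the same approach as the paper: write $(P_t-H)f=P_\epsilon(P_{t-\epsilon}-H)f$ via the heat kernel, apply Cauchy--Schwarz to pass to an $L^2$ bound, and then invoke Corollary~\ref{cor:poincareapp} together with $\|\nabla f\|_2\le\|\nabla f\|_\infty$. Your remark about the $\mu$ versus $\bar\mu$ normalization is exactly the source of the factor $\mu(M)$ appearing in the paper's version of the Cauchy--Schwarz step.
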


\begin{proof}
For each $x \in M$ we see that
\begin{align}
(P_t -H)f(x) =\,& (P_\epsilon P_{t-\epsilon} -H)f(x)\\
=\,& \int_M p_\epsilon(x,y) (P_{t-\epsilon} - H)f(y)d \mu(y)
\end{align}
which implies
\begin{align}
|P_t - H|f(x) \leq \mu(M) \| p_\epsilon(x,\cdot)\|_2 \|(P_{t-\epsilon} - H)f\|_2
\end{align}
which by Corollary \ref{cor:poincareapp} yields the inequality since $\| \n f\|_2 \leq \| \n f\|_\infty$.
\end{proof}

By combining Proposition \ref{prop:compderestszero} with Theorem \ref{thm:exponentialconv}, using the latter to cover the case $t \in [0,\epsilon]$, we deduce that if $f$ is the solution to the Stein equation
\begin{equation}
\mathcal{A}f = h - \bar{\mu}(h)
\end{equation}
where $h\in \Lip_1(M)$ then $f$ is given by the formula
\begin{equation}\label{eq:poissoncompact}
f(x) = - \int_0^\infty P_th(x) - \bar{\mu}(h) dt
\end{equation}
for all $x \in M$. We can similarly prove exponential decay of the supremum norms of the derivatives of $P_tf$, as in the following proposition:

\begin{proposition}\label{prop:compderests}
Suppose $M$ is a compact Riemannian manifold. For each $m \in \N$ and $\epsilon>0$ there exists a positive constant $C_m(\epsilon)>0$ such that
\begin{align}
\|\n^m P_tf\|_\infty \leq C_m(\epsilon) e^{-\lambda t} \| \n f \|_\infty
\end{align}
for all $t>\epsilon$ and all $f\in C^1$.
\end{proposition}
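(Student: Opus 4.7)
The proof should follow the same schema as Proposition \ref{prop:compderestszero}, only smoothing once through $P_\epsilon$ via the heat kernel, and then applying Corollary \ref{cor:poincareapp} to get the exponential decay. The key observation is that since $Hf$ is a constant, $\n^m P_\epsilon(Hf) = 0$ for $m \geq 1$, so we can replace the non-decaying term $P_{t-\epsilon}f$ by the $L^2$-decaying term $(P_{t-\epsilon}-H)f$ at no cost.

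\textbf{Steps.} First, for $t>\epsilon$, use the semigroup property to write
\begin{equation}
\n^m P_t f(x) \;=\; \n^m P_\epsilon\bigl((P_{t-\epsilon}-H)f\bigr)(x),
\end{equation}
where subtracting $Hf$ is legitimate since it is constant and $m\geq 1$. Second, represent $P_\epsilon$ through its smooth heat kernel and differentiate under the integral (justified because $p_\epsilon$ is smooth and $M$ is compact) to obtain
\begin{equation}
\n^m P_t f(x) \;=\; \int_M (\n_x^m p_\epsilon)(x,y)\,(P_{t-\epsilon}-H)f(y)\,d\mu(y).
\end{equation}
Third, apply Cauchy--Schwarz in $L^2(\mu)$ and convert to the normalized $L^2(\bar\mu)$-norm used in the paper, exactly as in the proof of Proposition \ref{prop:compderestszero}, yielding
\begin{equation}
|\n^m P_tf(x)| \;\leq\; \mu(M)\,\|\n^m p_\epsilon(x,\cdot)\|_2\,\|(P_{t-\epsilon}-H)f\|_2.
\end{equation}
Fourth, the first factor is bounded uniformly in $x$ by the hypothesis $\sup_{x\in M}\|\n^m p_\epsilon(x,\cdot)\|_2<\infty$ recorded in the excerpt, and Corollary \ref{cor:poincareapp} gives
\begin{equation}
\|(P_{t-\epsilon}-H)f\|_2 \;\leq\; e^{-\lambda(t-\epsilon)}\sqrt{\tfrac{1}{2\lambda}}\,\|\n f\|_2 \;\leq\; e^{-\lambda(t-\epsilon)}\sqrt{\tfrac{1}{2\lambda}}\,\|\n f\|_\infty,
\end{equation}
since $\bar\mu$ is a probability measure. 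Absorbing the $e^{\lambda\epsilon}$ into a constant produces the stated bound with
\begin{equation}
C_m(\epsilon)\;=\;\mu(M)\sqrt{\tfrac{1}{2\lambda}}\,e^{\lambda\epsilon}\sup_{x\in M}\|\n^m p_\epsilon(x,\cdot)\|_2.
\end{equation}

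\textbf{Main obstacle.} There is no substantial obstacle: the statement is essentially the higher-order analogue of Proposition \ref{prop:compderestszero}, and the only point requiring care is the legitimacy of pulling $\n^m$ inside the integral, which is immediate from the smoothness of $p_\epsilon$ and the compactness of $M$. The conceptually crucial move --- passing from $P_{t-\epsilon}f$ to $(P_{t-\epsilon}-H)f$ via the annihilation of constants by $\n^m$ --- is what couples the $L^2$-spectral-gap decay of Theorem \ref{thm:compact} to supremum-norm control of arbitrarily many covariant derivatives.
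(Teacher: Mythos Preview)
Your proposal is correct and follows essentially the same approach as the paper: write $P_tf = \int_M p_\epsilon(x,y)(P_{t-\epsilon}-H)f(y)\,d\mu(y) + Hf$, differentiate under the integral (the constant $Hf$ drops out), apply Cauchy--Schwarz, and invoke Corollary~\ref{cor:poincareapp}. Your explicit identification of the constant $C_m(\epsilon)$ is a small bonus not spelled out in the paper.
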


\begin{proof}
Firstly note that
\begin{align}
P_tf(x) =\,& P_\epsilon P_{t-\epsilon} f(x)\\
=\,& \int_M p_\epsilon(x,y) P_{t-\epsilon} f(y) d \mu(y)\\
=\,& \int_M p_\epsilon(x,y)(P_{t-\epsilon} - H)f(y)d\mu(y) + Hf
\end{align}
and therefore for each $v \in T_xM$ we have
\begin{equation}
(\n^m P_tf)_x(v) = \int_M (\n^m p_\epsilon(\cdot,y))_x(v) (P_{t-\epsilon} - H)f(y)d\mu(y).
\end{equation}
Consequently
\begin{align}
|\n^m P_tf|(x) \leq \mu(M) \| \n^m p_\epsilon(x,\cdot)\|_2 \| (P_{t-\epsilon} - H)f\|_2
\end{align}
from which the result follows, by Corollary \ref{cor:poincareapp}.
\end{proof}

The point here is that the derivatives of the semigroups all decay exponentially fast to zero and at the same rate at which the semigroup itself converges to equilibrium. By combining Proposition \ref{prop:compderests} with Theorems \ref{thm:estone}, \ref{thm:esttwo} and \ref{thm:estthree}, using those Theorems to cover the case $t \in (0,\epsilon]$, we have analogues of the derivative estimates used in the previous section:

\begin{theorem}\label{thm:estonecompact}
Suppose $M$ is a compact Riemannian manifold with $f \in C^1(M)$. Then there exist positive constants $C_0',C_1',C_2'$ and $C_3'$ such that
\begin{equation}
\|\n P_tf\|_\infty \leq C_0' e^{-\lambda t} \|\n f\|_\infty
\end{equation}
for all $t \geq 0$ with
\begin{equation}
\|\n d P_tf\|_\infty \leq  \frac{C_1' e^{-\lambda t}}{\sqrt{1\wedge t}} \|\n f\|_\infty 
\end{equation}
and
\begin{equation}
\|\n\n d P_tf\|_\infty \leq  \frac{C_2' e^{-\lambda t}}{1\wedge t} \|\n f\|_\infty
\end{equation}
and
\begin{equation}
\|\n\n d P_tf\|_\infty \leq  \frac{C_3' e^{-\lambda t}}{\sqrt{1\wedge t}} (\|\n f\|_\infty + \|\Hess f\|_\infty)
\end{equation}
for all $t > 0$.
\end{theorem}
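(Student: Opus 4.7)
The plan is to splice together two regimes: small-time bounds from the Bismut-type estimates of Theorems \ref{thm:estone}, \ref{thm:esttwo} and \ref{thm:estthree}, which provide the blow-up factors $1/\sqrt{1\wedge t}$ and $1/(1\wedge t)$, with the long-time exponential decay given by Proposition \ref{prop:compderests}.

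For $t\in(0,1]$ I would apply Theorems \ref{thm:estone}, \ref{thm:esttwo} and \ref{thm:estthree} with $Z=0$. On a compact manifold the full Riemann curvature and all its covariant derivatives are automatically bounded, and $\Ric$ is bounded below by some (possibly negative) constant, so the hypotheses of those theorems are satisfied. The resulting bounds on $\|\n P_tf\|_\infty$, $\|\n dP_tf\|_\infty$ and $\|\n\n dP_tf\|_\infty$ (in both versions) take the form $C/\sqrt{1\wedge t}$, $C/(1\wedge t)$, etc., with the exponential factors $e^{-Kt}$ appearing there absorbed into the constants since $t\leq 1$. Multiplying each bound by $e^{\lambda}e^{-\lambda t}\geq 1$ then inserts the desired factor $e^{-\lambda t}$ at the cost of an overall constant $e^{\lambda}$.

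For $t>1$ I would invoke Proposition \ref{prop:compderests} with $\epsilon=1$ and $m\in\{1,2,3\}$, yielding $\|\n^m P_tf\|_\infty\leq C_m(1)e^{-\lambda t}\|\n f\|_\infty$. Since $1\wedge t=1$ in this regime, these estimates are already in the desired form; the fourth estimate, involving $\|\Hess f\|_\infty$, follows trivially since $\|\n f\|_\infty\leq\|\n f\|_\infty+\|\Hess f\|_\infty$. Taking each $C_i'$ to be the maximum of the two constants produced in the two regimes then yields the claim. The main obstacle is essentially bookkeeping rather than analysis: one must ensure that the two regimes match at $t=1$ with compatible constants, and that the sharper third-derivative bound involving $\|\Hess f\|_\infty$ is used consistently in place of the cruder one in the statement. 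No new analytic ingredients are needed beyond combining the small-time Bismut-type derivative formulas with the spectral-gap-based smoothing established earlier in Section \ref{sec:7}.
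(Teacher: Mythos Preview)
Your proposal is correct and is essentially identical to the paper's own argument: the paper states the theorem as a direct consequence of combining Proposition \ref{prop:compderests} (for $t>\epsilon$) with Theorems \ref{thm:estone}, \ref{thm:esttwo} and \ref{thm:estthree} (for $t\in(0,\epsilon]$), which is precisely the splicing you describe with $\epsilon=1$.
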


Following the arguments of the previous section, including the proofs of Theorems \ref{thm:mainthmone} and \ref{thm:mainthmtwo}, using now the estimates provided by Theorem \ref{thm:estonecompact}, we obtain the following versions of our main results, for uniform approximation:

\begin{theorem}\label{thm:mainthmonecompact}
Suppose $M$ is a compact Riemannian manifold. Then there exists a positive constant $C$ such that for all $\lambda >0$ and all pairs $W,W'$ of identically distributed random variables taking values in $M$ and satisfying (A0), with $R_1$ and $R_2$ defined by (A1) and (A2), we have
\begin{equation}
\mathcal{W}(\mathcal{L}(W),\bar{\mu}) \leq C' \left(\frac{1}{\lambda}\E[|\delta|^3(|\log|\delta||\vee 1)] + \E[|R_1|] + \E[|R_2|]\right)
\end{equation}
where $|\delta| = d(W,W')$.
\end{theorem}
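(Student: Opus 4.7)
The plan is to transcribe the proof of Theorem~\ref{thm:mainthmone} line by line, with the curvature-based derivative estimates of Theorems~\ref{thm:esttwo} and \ref{thm:estthree} replaced by the spectral-gap-based estimates of Theorem~\ref{thm:estonecompact}. The Taylor expansion of Section~\ref{sec:4} and the identical-pair identity \eqref{eq:steinrear} are purely geometric and require no modification, and under (A0) the Stein solution associated to $h \in C^1_b(M)$ through \eqref{eq:poissoncompact} is well-defined and three times differentiable by Propositions~\ref{prop:compderestszero} and \ref{prop:compderests}. To avoid a notational collision I temporarily write $\Lambda$ for the spectral gap of \eqref{eq:rayleigh} and reserve $\lambda$ for the free scaling parameter of (A1)--(A2) as in the statement.

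I begin by fixing $h \in C^1_b(M)$, setting $f$ as in \eqref{eq:poissoncompact} with $\psi = 0$, and decomposing $f = f_1 + f_2$ with
\begin{equation}
f_1 := -\int_0^{|\delta|^2}\!(P_t h - \bar\mu(h))\,dt, \qquad f_2 := -\int_{|\delta|^2}^\infty\!(P_t h - \bar\mu(h))\,dt,
\end{equation}
exactly as in the non-compact case. Applying the Taylor expansion from Section~\ref{sec:4} along the minimising geodesic from $W$ to $W'$, combining with the fact that $W$ and $W'$ are identically distributed, and rearranging via \eqref{eq:steinrear} specialised to $\psi = 0$ and $\mu_\psi = \bar\mu$, I obtain
\begin{align}
|\E[h(W)] - \bar\mu(h)| \leq\,& \|df\|_\infty \E[|R_1|] + \|\n df\|_\infty \E[|R_2|]\\
& + \tfrac{1}{\lambda}\E[|\delta|^2\|\n df_1\|_\infty] + \tfrac{1}{6\lambda}\E[|\delta|^3\|\n\n df_2\|_\infty].
\end{align}

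Each of the four derivative norms on the right is then controlled by Theorem~\ref{thm:estonecompact}, with $\Lambda$ now playing the role previously played by $K$. Integrating the first estimate there over $(0,\infty)$ gives a uniform bound on $\|df\|_\infty$ in terms of $\|\n h\|_\infty$ (the compact analogue of Proposition~\ref{prop:steinest1}); integrating the second estimate over $(0,\infty)$ gives a uniform bound on $\|\n df\|_\infty$ (the analogue of Proposition~\ref{prop:steinest2}); integrating the second only over $(0,|\delta|^2)$ gives $\|\n df_1\|_\infty \leq C\|\n h\|_\infty(|\delta|\wedge 1)$; and integrating the third over $(|\delta|^2,\infty)$ gives $\|\n\n df_2\|_\infty \leq C\|\n h\|_\infty(|\log|\delta||\vee 1)e^{-\Lambda(|\delta|\vee 1)^2}$, with the exponential factor absorbed into the logarithmic term just as in the non-compact proof. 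Substituting these four bounds into the displayed inequality yields, for every $h \in C^1_b(M)$,
\begin{equation}
|\E[h(W)] - \bar\mu(h)| \leq C'\|\n h\|_\infty\left(\tfrac{1}{\lambda}\E[|\delta|^3(|\log|\delta||\vee 1)] + \E[|R_1|] + \E[|R_2|]\right),
\end{equation}
for a constant $C'$ depending only on $M$. The passage from $C^1_b(M)$ to $\Lip_1(M)$, and hence from the dual bound above to the Wasserstein distance, is then accomplished by the smooth approximation of \cite[Theorem~1]{AzagraFerreraLopezMesasRangel2007}, exactly as in the final part of the proof of Theorem~\ref{thm:mainthmone}.

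There is no genuine mathematical obstacle in this argument: it is a direct transcription of the proof of Theorem~\ref{thm:mainthmone}, with the lower bound on $\Ric_\psi$ replaced throughout by the spectral gap $\Lambda$, and with Theorem~\ref{thm:estonecompact} supplying the derivative estimates in place of Theorems~\ref{thm:esttwo} and \ref{thm:estthree}. The only points requiring care are the overloaded use of the symbol $\lambda$ and the elementary verification that the small-time singularities $1/\sqrt{1\wedge t}$ and $1/(1\wedge t)$ appearing in Theorem~\ref{thm:estonecompact} integrate against $e^{-\Lambda t}$ over the intervals $(0,|\delta|^2)$ and $(|\delta|^2,\infty)$ to produce precisely the $|\delta|\wedge 1$ and $(|\log|\delta||\vee 1)$ growth rates needed to recover the stated bound.
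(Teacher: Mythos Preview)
Your proposal is correct and follows exactly the approach the paper itself indicates: the paper's ``proof'' of Theorem~\ref{thm:mainthmonecompact} is merely the sentence preceding the statement, instructing the reader to rerun the proof of Theorem~\ref{thm:mainthmone} with the estimates of Theorem~\ref{thm:estonecompact} in place of those based on the curvature bound. You have supplied precisely that transcription, correctly identified the overloaded symbol $\lambda$, and verified that the small-time singularities integrate to the required growth rates.
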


\begin{theorem}\label{thm:mainthmtwocompact}
Suppose $M$ is a compact Riemannian manifold. Then there exists a positive constant $C$ such that for all $\lambda >0$ and all pairs $W,W'$ of identically distributed random variables taking values in $M$ and satisfying (A0), with $R_1$ and $R_2$ defined by (A1) and (A2), we have
\begin{equation}
d_{\mathcal{H}}(\mathcal{L}(W),\bar{\mu}) \leq C' \left(\frac{1}{\lambda}\E[d^3(W,W')] + \E[|R_1|] + \E[|R_2|]\right)
\end{equation}
where the distance $d_{\mathcal{H}}$ is defined by \eqref{eq:ddist}.
\end{theorem}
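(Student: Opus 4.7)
The plan is to follow the proof of Theorem \ref{thm:mainthmtwo} essentially verbatim, substituting the spectral-gap derivative estimates of Theorem \ref{thm:estonecompact} for the Bakry-Emery estimates of Section \ref{sec:6}. First I fix a test function $h \in \mathcal{H}$, so that $\|\nabla h\|_\infty \leq 1$ and $\|\Hess h\|_\infty \leq 1$, and I solve the Stein equation $\mathcal{A}f = h - \bar{\mu}(h)$ via the representation \eqref{eq:poissoncompact}, whose convergence is guaranteed by Proposition \ref{prop:compderestszero}. Because $h$ already lies in $C_b^2(M)$, no smoothing via \cite{AzagraFerreraLopezMesasRangel2007} is needed.

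I then apply the third-order Taylor expansion from Section \ref{sec:4} to $f(W') - f(W)$, take expectations, and use that $W$ and $W'$ are identically distributed together with the rearranged Stein identity \eqref{eq:steinrear} to arrive at
\begin{equation}
|\E[h(W)] - \bar{\mu}(h)| \leq \|df\|_\infty \E[|R_1|] + \|\nabla df\|_\infty \E[|R_2|] + \tfrac{1}{6\lambda}\E[|\delta|^3 \|\nabla\nabla df\|_\infty].
\end{equation}
Integrating each of the first, second and \emph{fourth} estimates of Theorem \ref{thm:estonecompact} over $t \in (0,\infty)$, which is justified because the exponential spectral-gap factor dominates the at-worst integrable $1/\sqrt{1\wedge t}$ singularity at $t=0$, yields constants $c_0',c_1',c_4'$ such that $\|\nabla f\|_\infty \leq c_0'\|\nabla h\|_\infty$, $\|\nabla df\|_\infty \leq c_1'\|\nabla h\|_\infty$ and, crucially, $\|\nabla\nabla df\|_\infty \leq c_4'(\|\nabla h\|_\infty + \|\Hess h\|_\infty)$. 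Substituting these, using $\|\nabla h\|_\infty,\|\Hess h\|_\infty \leq 1$, and taking the supremum over $h\in\mathcal{H}$ delivers the claimed bound on $d_{\mathcal{H}}(\mathcal{L}(W),\bar{\mu})$.

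The step worth underscoring---and the entire reason for including the fourth estimate in Theorem \ref{thm:estonecompact}---is that here I do \emph{not} need to split $f = f_1 + f_2$ at the threshold $|\delta|^2$ as was done for Theorem \ref{thm:mainthmonecompact}. Paying for an extra $\|\Hess h\|_\infty$ on the right-hand side upgrades the non-integrable $1/(1\wedge t)$ singularity in the third-derivative estimate to the integrable $1/\sqrt{1\wedge t}$, so $\|\nabla\nabla df\|_\infty$ is bounded by a single global constant rather than producing a $|\log|\delta||\vee 1$ factor. This trade-off is precisely what distinguishes the $\mathcal{W}$ result from the $d_{\mathcal{H}}$ result, and I anticipate no additional genuine obstacle: everything else is bookkeeping of the three time integrals.
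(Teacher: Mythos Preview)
Your proposal is correct and matches the paper's own approach: the paper simply states that Theorems \ref{thm:mainthmonecompact} and \ref{thm:mainthmtwocompact} follow by repeating the proofs of Theorems \ref{thm:mainthmone} and \ref{thm:mainthmtwo} with the estimates of Theorem \ref{thm:estonecompact} in place of those from Section \ref{sec:derests}. Your identification of the key point---that the fourth estimate in Theorem \ref{thm:estonecompact} renders the $f_1/f_2$ splitting unnecessary and eliminates the $|\log|\delta||$ factor at the cost of the extra $\|\Hess h\|_\infty$---is exactly the content of the paper's remark that for Theorem \ref{thm:mainthmtwo} one ``simply uses Proposition \ref{prop:steinest3} to estimate the third derivative.''
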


\section{Examples of Riemannian measures}\label{sec:8}

\textbf{Euclidean space.} If $M=\R^n$ with the standard Euclidean inner product then all curvature operators are identically zero and the assumptions of Theorems \ref{thm:mainthmone} and \ref{thm:mainthmtwo} become $\Hess \psi \geq K >0$ with
\begin{equation}
\|\n \Hess \psi\|_\infty < \infty, \quad \|\n \n \Hess \psi\|_\infty < \infty.
\end{equation}
For a symmetric positive definite matrix $A$ and $y \in \R^n$, the function
\begin{equation}
\psi(x,y) = - \tfrac{1}{2}\< x-y, A(x-y)\>
\end{equation}
satisfies these assumptions with the corresponding Gaussian measure given by
\begin{equation}
d\mu_\psi = \left(\frac{\det(A)}{2 \pi}\right)^{\frac{n}{2}} e^{\psi(x-y)} dx.
\end{equation}
If $A = \tfrac{1}{t}\id_{\R^n}$ then the density on the right-hand side becomes the heat kernel
\begin{equation}
p_t(x,y) = {\left( 2 \pi t \right) }^{-\frac{n}{2}} \exp \left(-\frac{d^2(x,y)}{2t} \right)
\end{equation}
for $x,y \in \mathbb{R}^n$ and $t>0$.
\bigbreak
\textbf{Hyperbolic space.}
Denote by $\mathbb{H}^3_\kappa$ the $3$-dimensional hyperbolic space with constant sectional curvatures equal to $\kappa<0$. On this space there is an explicit formula for the heat kernel. In particular
\begin{equation}
p_t(x,y) = (2\pi t)^{-\frac{3}{2}}\exp\left[-\frac{d^2(x,y)}{2t}\right]\frac{\sqrt{-\kappa}d(x,y)e^{\frac{\kappa t}{2}}}{\sinh \left(\sqrt{-\kappa}d(x,y)\right)}
\end{equation}
for $x,y \in \mathbb{H}^{3}_\kappa$ and $t>0$. Denoting $r = d(x,y)$ and differentiating in $x$ we find
\begin{equation}
\n \log p_t(x,y) = -\frac{1}{2t}\n r^2 + \left( \frac{1}{r} - \sqrt{-\kappa} \coth(\sqrt{-\kappa} r)\right) \n r
\end{equation}
so that
\begin{align}
\Hess \log p_t(x,y) =\, -\frac{1}{2t}\Hess r^2 &- \left(\frac{1}{r^2} + \kappa \cosech^2(\sqrt(-\kappa) r)\right) \n r \otimes \n r\\
& + \left(\frac{1}{r} - \sqrt{- \kappa} \coth(\sqrt{-\kappa r})\right) \Hess r
\end{align}
and therefore, for any vector field $X$, we see that
\begin{align}
(\Ric - 2 \Hess \log p_t(x,y))(X,X) \\
=\, 2\kappa |X|^2 + \frac{1}{t}\Hess r^2(X,X) &+ 2\left(\frac{1}{r^2} + \kappa \cosech^2(\sqrt(-\kappa) r)\right) \< \n r, X\>^2\\
& + 2\left(\frac{1}{r} - \sqrt{- \kappa} \coth(\sqrt{-\kappa r})\right) \Hess r(X,X).
\end{align}
But it is well known that in this setting we have
\begin{align}
&\Hess r^2(X,X) = 2\< \n r, X\>^2 + \frac{2rH'(r)}{H(r)}(|X|^2 - \<\n r,X\>^2), \label{eq:hessr2}\\
&\Hess r(X,X) = \frac{H'(r)}{H(r)}(|X|^2 - \< \n r, X\>^2) \label{eq:hessr}
\end{align}
where the function $H:[0,\infty) \rightarrow [0,\infty)$ is given by
\begin{equation}
H(r) = \frac{\sinh(\sqrt{-\kappa}r)}{\sqrt{-\kappa}}
\end{equation}
for $r \geq 0$. Using equation \eqref{eq:hessr} and by separately considering the cases $X \perp \n r$ and $X \not\perp \n r$ it is easy to see that
\begin{equation}
(\Ric - 2 \Hess \log p_t(x,y))(X,X) \geq 2\kappa |X|^2 + \frac{1}{t}\Hess r^2(X,X)
\end{equation}
which by \eqref{eq:hessr2} implies
\begin{equation}\label{eq:hypercond}
\Ric - 2 \Hess \log p_t(x,y) \geq 2\left(\kappa + \frac{1}{t}\right).
\end{equation}
Note that the right-hand side of \eqref{eq:hypercond} is strictly positive so long as
\begin{equation}
t < -\frac{1}{\kappa}.
\end{equation}
Fixing such a $t$, by Theorem \ref{thm:exponentialconv}, the distribution of a diffusion with generator
\begin{equation}
\frac{1}{2}\Delta + \n \log p_t(x,y)
\end{equation}
will therefore converge exponentially fast in the Wasserstein distance to the invariant probability measure
\begin{equation}
p_t(x,y)d\mu(x).
\end{equation}
In this example the Hessian of the logarithm of the heat kernel balances the negative curvature on $\mathbb{H}^3_\kappa$ so long as $t < -1/\kappa$ and therefore heat kernel approximation is possible in this setting. Note that as the curvature $\kappa$ approaches zero, the bound on $t$ becomes $t<\infty$ as it was on $\R^3$.
\bigbreak
\textbf{Spherical space.}
The uniform probability measure $\bar{\mu}$ on a compact manifold is an example of a Riemannian measure approximations to which are covered by Theorems \ref{thm:mainthmonecompact} and \ref{thm:mainthmtwocompact}. The sphere $\mathbb{S}^n_{\kappa}$ of dimension $n$ with constant sectional curvatures $\kappa >0$, that is, with radius $\pi/\sqrt{\kappa}$, is one such compact manifold. Note that on $\mathbb{S}^n_{\kappa}$ the Ricci curvature is given by $\Ric = (n-1)\kappa >0$ and therefore the uniform probability measure on $\mathbb{S}^n_{\kappa}$ also satisfies the conditions of Theorems \ref{thm:mainthmone} and \ref{thm:mainthmtwo}, which provide the same estimates as Theorems \ref{thm:mainthmone} and \ref{thm:mainthmtwo} except with slightly different constants. Recall that by the Bonnet-Myers theorem, if $M$ is a Riemannian manifold with $\Ric \geq (n-1)\kappa>0$ then $M$ is compact and has diameter at most $\pi/\sqrt{\kappa}$.

%\begin{equation}
%\frac{1}{\mu(\mathbb{S}^n_{\kappa})} d \mu
%\end{equation}
%where
%\begin{equation}
%\mu(\mathbb{S}^n_{\kappa}) = \frac{2\pi^{\frac{n+1}{2}}}{\Gamma(\frac{n+1}{2}} r^n
%\end{equation}

\bigbreak
\textbf{Diffusion operators.} We noted earlier that on an arbitrary smooth manifold $M$, any smooth elliptic diffusion operator $\A$ induces a Riemannian metric on $M$ with respect to which it takes the form $\A=\tfrac{1}{2}\Delta + Z$ for some smooth vector field $Z$. For example, suppose $\sigma : \R^n \times \R^n \rightarrow \R^n$ is a smooth map such that $\sigma(x):\R^n \rightarrow \R^n$ is a linear bijection for each $x \in \R^n$ with $\psi$ a smooth vector field on $\R^n$. Then, with respect to the standard coordinates $\{x^i\}_{i=1}^n$ on $\R^n$, the infinitesimal generator of the It\^{o} diffusion
\begin{equation}
dX_t = \sigma(X_t)dB_t + \n \psi(X_t)dt
\end{equation}
is given by the operator
\begin{equation}
\A f(x) = \frac{1}{2}\sum_{i,j=1}^n (\sigma(x) \sigma(x)^{T})_{ij}(x) \frac{\partial^2}{\partial x_i \partial x_j} f(x) + \sum_{i=1}^n (\n \psi)_i(x) \frac{\partial}{\partial x_i} f(x)
\end{equation}
which induces a metric $g$ on $\R^n$ with components given by
\begin{equation}
g_{ij} = ((\sigma(x) \sigma(x)^{T})^{-1})_{ij}.
\end{equation}
The Christoffel symbols are given by
\begin{equation}
\Gamma^{m}{}_{ij}={\frac{1}{2}}\,g^{mk}\left({\frac {\partial }{\partial x^{j}}}g_{ki}+{\frac {\partial }{\partial x^{i}}}g_{kj}-{\frac {\partial }{\partial x^{k}}}g_{ij}\right)
\end{equation}
where
\begin{equation}
g^{mk} = (\sigma(x) \sigma(x)^{T})_{mk}
\end{equation}
and the components of the Ricci curvature tensor are given by
\begin{equation}
R_{ij}={\frac {\partial \Gamma ^{\ell }{}_{ij}}{\partial x^{\ell }}}-{\frac {\partial \Gamma ^{\ell }{}_{i\ell }}{\partial x^{j}}}+\Gamma ^{m}{}_{ij}\Gamma ^{\ell }{}_{\ell m}-\Gamma ^{m}{}_{i\ell }\Gamma ^{\ell }{}_{jm}.
\end{equation}
The components of the Hessian of $\psi$ are given by
\begin{equation}
(\Hess \psi)_{ij}=\left({\frac {\partial ^{2}\psi}{\partial x^{i}\partial x^{j}}}-\Gamma^{k}{}_{ij}{\frac {\partial \psi}{\partial x^{k}}}\right)
\end{equation}
so positivity of the Bakry-Emery-Ricci tensor can, in principle, be checked by direct computation. Explicit formulas for the other curvature operators are also well known, so the other assumptions in Theorems \ref{thm:mainthmone} and \ref{thm:mainthmtwo} can be checked similarly. If all necessary assumptions are verified then the invariant probability measure would take the form
\begin{equation}
d\mu_{\psi}(x) = \frac{1}{c} \cdot \frac{e^{\psi(x)}}{|\det(\sigma(x))|}\,dx
\end{equation}
for some normalizing constant $c$. %Regarding assumptions (A1) and (A2), furthermore
%\begin{equation}
%\nabla \psi= \sigma(x)\sigma(x)^{T} d \psi
%\end{equation}
%with $\{e_i(x)\}_{i=1}^n$ an orthonormal basis with respect to $(\sigma(x)\sigma(x)^{T})^{-1}$.

\section{Commutation relations}\label{sec:comm}

Suppose $M$ is a Riemannian manifold with $\n$ the Levi-Civita connection. In this section $X,Y,V$ and $W$ will denote fixed vector fields on $M$.
%For more general tensor fields $S$ and $T$, we define the symmetric tensor product of $S$ and $T$ by
%\begin{equation}
%S \odot T := \frac{1}{2}(S\otimes T + T \otimes S).
%\end{equation}
The covariant derivative of a tensor field $T$ is denoted $\nabla T$ and we will adopt the convention
\begin{equation}
\n_X T = (\n T)(X,\ldots)
\end{equation}
so that $X$ appears as the first entry rather than the last. The second covariant derivative of a tensor field $T$ is defined by
\begin{equation}
\n^2_{X,Y}T = (\n \n T)(X,Y,\ldots) = \n_X \n_YT - \n_{\n_X Y}T
\end{equation}
and similarly the third covariant derivative is defined by
\begin{equation}
\n^3_{X,Y,V}T = (\n \n \n T)(X,Y,V,\ldots) = \n_X \n^2_{Y,V}T - \n^2_{\n_X Y,V} T - \n^2_{Y,\n_X V} T.
\end{equation}
The Riemannian curvature tensor $R$ is defined by
\begin{align}
R(X,Y) := \n_X \n_Y  - \n_Y \n_X  - \n_{[X,Y]} = \n^2_{X,Y} - \n^2_{Y,X}
\end{align}
where $[X,Y]$ denotes the Lie bracket of the vector fields $X$ and $Y$, we set
\begin{equation}
R(X,Y,V,W) := \langle R(X,Y)V,W\rangle
\end{equation}
and define the Ricci curvature tensor $\Ric$ by
\begin{equation}
\Ric(X,Y) = \tr R(X,\cdot,\cdot,Y).
\end{equation}
If $f$ is a smooth function then the Hessian of $f$ is the second covariant derivative of $f$ and is therefore given by
\begin{equation}
(\Hess f)(X,Y) := (\n df)(X,Y) = X(Yf) - df(\n_X Y ).
\end{equation}
The Hessian of $f$ is symmetric since the Levi-Civita connection is torsion free.

\begin{lemma}\label{lem:commutator1}
For any vector fields $X$ and $Z$ we have
\begin{equation}
d(Z(f))(X) = (\n_Z df)(X) + df(\n_X Z).
\end{equation}
\end{lemma}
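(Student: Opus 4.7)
The plan is to unwind both sides using only the definition of the covariant derivative of a $1$-form, the torsion-free property of the Levi-Civita connection, and the definition of the Lie bracket.

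First I would rewrite the left-hand side: since $Z(f) = df(Z)$ is a scalar function, its differential applied to $X$ is simply the directional derivative, so
\begin{equation}
d(Z(f))(X) = X(Z(f)) = X(Zf).
\end{equation}

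Next I would expand the first term on the right-hand side using the standard formula for the covariant derivative of a $1$-form acting on a vector field, namely $(\n_Z \omega)(X) = Z(\omega(X)) - \omega(\n_Z X)$ applied to $\omega = df$. This gives
\begin{equation}
(\n_Z df)(X) = Z(df(X)) - df(\n_Z X) = Z(Xf) - df(\n_Z X).
\end{equation}
Adding $df(\n_X Z)$ to both sides, the right-hand side of the lemma becomes
\begin{equation}
Z(Xf) + df(\n_X Z - \n_Z X).
\end{equation}

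Since the Levi-Civita connection is torsion-free, $\n_X Z - \n_Z X = [X,Z]$, so the above equals $Z(Xf) + [X,Z] f$. By the definition of the Lie bracket, $[X,Z]f = X(Zf) - Z(Xf)$, so the two $Z(Xf)$ terms cancel and we are left with $X(Zf)$, matching the left-hand side.

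There is no real obstacle here; the only thing to be careful about is applying the sign convention for the covariant derivative of a $1$-form in the form $(\n_Z df)(X) = Z(df(X)) - df(\n_Z X)$ (which is equivalent to the earlier identity $\n^2_{X,Y} T = \n_X \n_Y T - \n_{\n_X Y}T$ read with $T = f$ and then differentiated once more), and invoking torsion-freeness at the right moment.
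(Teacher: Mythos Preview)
Your proof is correct and is essentially the same argument as the paper's, just unwound one step further. The paper writes $d(Z(f))(X) = \nabla_X(df(Z)) = (\nabla_X df)(Z) + df(\nabla_X Z)$ and then invokes the symmetry of $\Hess f$ to swap $(\nabla_X df)(Z)$ for $(\nabla_Z df)(X)$; you instead expand $(\nabla_Z df)(X)$ directly and use torsion-freeness plus the Lie bracket identity, which is precisely what underlies that symmetry. Same content, slightly different packaging.
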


\begin{proof}
Since $\Hess f$ is symmetric it follows that
\begin{equation}
d(Z(f))(X) = \n_X (df(Z)) = (\n_X df)(Z) + df(\n_X Z) = (\n_Z df)(X) + df(\n_X Z)
\end{equation}
as required.
\end{proof}

The second covariant derivative is not symmetric in general. In particular, if $T \in \Gamma( (T^\star M)^{\otimes k})$ and $V_1,V_2,\ldots,V_k$ are vector fields then the commutator is given by the \emph{Ricci identity} which states
\begin{align}
(\n^2_{X,Y}T - \n^2_{Y,X}T)(V_1,V_2,\ldots,V_k)=\,& -T(R(X,Y)V_1,V_2,\ldots,V_k) \\
&\quad-T(V_1,R(X,Y)V_2,\ldots,V_k)\\
&\quad\quad- \ldots - T(V_1,V_2,\ldots,R(X,Y)V_k).
\end{align}
Using the Ricci identity we can calculate commutators involving covariant differentiation in the direction of the vector field $Z$.

\begin{lemma}\label{lem:commutator2}
For any vector fields $X,Y$ and $Z$ we have
\begin{equation}
(\n_X \n_Z df)(Y)= (\n_Z \n df)(X,Y) + (df)(R(Z)(X,Y)) + (\n df)(\n_X Z,Y)
\end{equation}
where $R(Z)(X,Y):=R(Z,X)Y$.
\end{lemma}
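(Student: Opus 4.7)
The plan is to unfold the second covariant derivative in the direction appearing on the left, swap the order of differentiation using the Ricci identity, and then repackage the curvature term using the antisymmetry of $R$.

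First I would rewrite the left-hand side using the definition of $\n^2$ given earlier in the section, namely $\n^2_{X,Z} T = \n_X \n_Z T - \n_{\n_X Z} T$. Applied to $T = df$ and evaluated at $Y$, this gives
\begin{equation}
(\n_X \n_Z df)(Y) = (\n^2_{X,Z} df)(Y) + (\n_{\n_X Z} df)(Y) = (\n\n df)(X,Z,Y) + (\n df)(\n_X Z, Y),
\end{equation}
so the last term in the statement is already accounted for, and the remaining task is to rewrite $(\n\n df)(X,Z,Y)$ as $(\n_Z \n df)(X,Y) + (df)(R(Z,X)Y)$.

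Next I would apply the Ricci identity to the $1$-form $df$ (the case $k=1$), interchanging the roles of $X$ and $Z$:
\begin{equation}
(\n^2_{X,Z}df - \n^2_{Z,X}df)(Y) = -(df)(R(X,Z)Y).
\end{equation}
Using antisymmetry $R(X,Z) = -R(Z,X)$ this becomes $(df)(R(Z,X)Y) = (df)(R(Z)(X,Y))$ in the notation of the statement. Rearranging,
\begin{equation}
(\n^2_{X,Z}df)(Y) = (\n^2_{Z,X}df)(Y) + (df)(R(Z)(X,Y)) = (\n_Z \n df)(X,Y) + (df)(R(Z)(X,Y)),
\end{equation}
where in the last equality I used the convention $(\n\n df)(Z,X,Y) = (\n_Z \n df)(X,Y)$.

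Combining the two displays yields the claimed identity. There is no real obstacle here: the result is a bookkeeping consequence of the definition of iterated covariant derivatives and one application of the Ricci identity to a $1$-form, with the only point of care being the sign flip from $R(X,Z)$ to $R(Z,X)$ required to match the notation $R(Z)(X,Y) := R(Z,X)Y$.
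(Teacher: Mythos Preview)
Your proof is correct and follows essentially the same approach as the paper: unfold $\n_X\n_Z df$ via the definition of $\n^2_{X,Z}$, apply the Ricci identity to the $1$-form $df$, and flip the sign using the antisymmetry $R(X,Z)=-R(Z,X)$. The paper's argument is identical, only written as a single chain of equalities.
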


\begin{proof}
By the definition of the second covariant derivative and the Ricci identity we see that
\begin{align}
(\n_X \n_Z df)(Y) =\,& (\n^2_{X,Z} df)(Y) + (\n_{\n_X Z} df)(Y)\\
=\,& (\n_{Z,X}^2 df)(Y) - (df)(R(X,Z)Y) + (\n_{\n_X Z} df)(Y)\\
=\,& (\n_Z \n df)(X,Y) + (df)(R(Z,X)Y) + (\n df)(\n_X Z,Y)
\end{align}
as required.
\end{proof}

%\begin{lemma}\label{lem:commutator2}
%For any vector fields $X,Y$ and $Z$ we have
%\begin{align}
%(\n d)(Z(f))(X,Y)=\,& (\n_Z \n df)(X,Y) + 2(\n df)(\n Z \odot \id)(X,Y)\\
%& + df(R(Z)(X,Y)) + df(\n^2_{X,Y} Z)
%\end{align}
%where $R(Z)(X,Y):=R(Z,X)Y$.
%\end{lemma}

%\begin{proof}
%Fix $x \in M$ and a local orthonormal frame $\lbrace e_i \rbrace_{i=1}^n$ such that
%\begin{equation}
%(\n_{e_i}e_j)(x) = 0
%\end{equation}
%for all $i,j \in \lbrace 1,\ldots,n\rbrace$. Then, employing the notation $\n_{e_i} = \n_i$ and calculating at the point $x$ it follows, by Lemma \ref{lem:commutator1}, that
%\begin{align}
%&(\n d) (Z(f))(e_i,e_j)\\
%=\,& \n_i((\n_j df)(Z) + df(\n_j Z))\\
%=\,& (\n_i \n_j df)(Z) + (\n df)(\n_i Z,e_j) + (\n df)(e_i,\n_j Z) + df(\n_i \n_j Z).
%\end{align}
%Now, since $\Hess f$ is symmetric, we have
%\begin{equation}
%(\n_i \n_j df)(e_k) = \n_i ((\n_j df)(e_k)) =  \n_i ((\n_k df)(e_j)) = (\n_i \n_k df)(e_j)
%\end{equation}
%and by the Ricci identity we have
%\begin{align}
%(\n_i \n_k df)(e_j) =\,& (\n_k \n_i df)(e_j) - df(R(e_i,e_k)e_j)\\
%=\,& (\n_k \n df)(e_i,e_j) - df(R(e_i,e_k)e_j)
%\end{align}
%so therefore
%\begin{equation}
%(\n_i \n_j df)(Z) = (\n_Z \n df)(e_i,e_j) - df(R(e_i,Z)e_j).
%\end{equation}
%Consequently, since $R(X,Y) = -R(Y,X)$, it follows that
%\begin{align}
%(\n d)(Z(f))(e_i,e_j) =\,& (\n_Z \n df)(e_i,e_j) + df(R(Z,e_i)e_j) \\
%&+ (\n df)(\n_i Z,e_j) + (\n df)(e_i,\n_j Z) + df(\n_i \n_j Z)
%\end{align}
%which implies the formula, since the point $x$ was arbitrary.
%\end{proof}

\begin{lemma}\label{lem:commutator3}
For any vector fields $X,Y,V$ and $Z$ we have
\begin{align}
(\n_X \n_Z \n df)(Y,V) =\,& (\n_{Z}\n \n df)(X,Y,V) + (\n df)(R(Z)(X,Y),V)\\
&+ (\n df)(Y,R(Z)(X,V)) + (\n \n df)(\n_X Z,Y,V)
\end{align}
where $R(Z)$ is defined as in Lemma \ref{lem:commutator2}.
\end{lemma}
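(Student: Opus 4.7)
The plan is to follow the same pattern as the proof of Lemma \ref{lem:commutator2}, but with $df$ replaced by the $2$-tensor $\n df$. First I would expand the left-hand side using the definition of the second covariant derivative,
\begin{equation}
\n_X \n_Z \n df = \n^2_{X,Z}\,\n df + \n_{\n_X Z}\,\n df,
\end{equation}
so that, evaluated on $(Y,V)$, the second term immediately contributes $(\n \n df)(\n_X Z,Y,V)$, matching the last term in the claim.

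The next step is to swap the order of differentiation in $\n^2_{X,Z}\,\n df$ via the Ricci identity. Since $\n df$ is a covariant $2$-tensor, the identity contributes two curvature corrections, one in each slot:
\begin{align}
(\n^2_{X,Z}\,\n df - \n^2_{Z,X}\,\n df)(Y,V)
=\,& -(\n df)(R(X,Z)Y,V) - (\n df)(Y,R(X,Z)V).
\end{align}
Using the antisymmetry $R(X,Z) = -R(Z,X)$ together with the notation $R(Z)(X,\cdot) = R(Z,X)\cdot$, these two terms become exactly $(\n df)(R(Z)(X,Y),V)$ and $(\n df)(Y,R(Z)(X,V))$.

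It then remains to rewrite $\n^2_{Z,X}\,\n df$ as a piece of the full third covariant derivative. By the definition of $\n^2$ acting on the tensor field $\n df$, we have $(\n^2_{Z,X}\,\n df)(Y,V) = (\n_Z \n \n df)(X,Y,V)$, which gives the first term on the right-hand side of the claim. Collecting all four contributions produces the stated identity. There is no real obstacle here: the computation is a direct bookkeeping exercise, and the only subtlety is remembering the sign convention $R(X,Z)=-R(Z,X)$ when passing from the Ricci identity to the notation $R(Z)(X,\cdot)$ used in the statement.
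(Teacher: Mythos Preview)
Your proof is correct and follows exactly the paper's approach: expand $\n_X \n_Z \n df$ via the definition of $\n^2$, apply the Ricci identity to the $2$-tensor $\n df$ to produce the two curvature corrections, flip the sign using $R(X,Z)=-R(Z,X)$, and identify $(\n^2_{Z,X}\n df)(Y,V)$ with $(\n_Z\n\n df)(X,Y,V)$. There is nothing to add.
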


\begin{proof}
As in the proof of Lemma \ref{lem:commutator3}, using the Ricci identity we see that
\begin{align}
(\n_X \n_Z \n df)(Y,V) =\,&  (\n^2_{X,Z} \n df)(Y,V) + (\n_{\n_X Z} \n df)(Y,V)\\
=\,& (\n_{Z,X}^2 \n df)(Y,V) - (\n df)(R(X,Z)Y,V)\\
&- (\n df)(Y,R(X,Z)V) + (\n \n df)(\n_X Z,Y,V)\\
=\,& (\n_{Z}\n \n df)(X,Y,V) + (\n df)(R(Z,X)Y,V)\\
&+ (\n df)(Y,R(Z,X)V) + (\n \n df)(\n_X Z,Y,V)
\end{align}
as required.
\end{proof}

Lemmas \ref{lem:commutator1}, \ref{lem:commutator2} and \ref{lem:commutator3} show how the first order part of the operator $\A$, namely covariant differentiation in direction $Z$, commutes with covariant differentiation in the direction of some other vector field. We will also need commutation formulas for the second order part, namely the Laplacian.

\section{Weitzenb\"{o}ck formulas}\label{sec:10}

Suppose that $E \rightarrow M$ is a vector bundle over $M$. Suppose $E$ is equipped with a covariant derivative $\n$ with curvature tensor $R^E$ and set
\begin{equation}
\square := \tr \n^2.
\end{equation}
The divergence operator acting on $R^E$ is defined by
\begin{equation}
(\n \cdot R^E)_X := \tr (\n_{\cdot} R^E)(\cdot,X)
\end{equation}
for each vector field $X$. In terms of this operator, the commutator of $\square$ and $\n$ is given by \cite[Proposition~A.10]{DriverThalmaier2001}, which states that if $a \in \Gamma(E)$ and $X$ is a vector field then
\begin{equation}\label{eq:weitgen}
\n_X \square a =  (\square \n a)(X) -(\n a) ({\Ric}^\sharp (X)) - (\n \cdot R^E)_X a + 2 \tr R^E(X,\cdot) \n_{\cdot} a.
\end{equation}
The following lemma is called the \emph{Weitzenb\"{o}ck formula}.

\begin{lemma}\label{lem:weitzenbock1}
We have
\begin{equation}
d \Delta f = \tr \nabla^2 df - (df)({\Ric}^\sharp).
\end{equation}
\end{lemma}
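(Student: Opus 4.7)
The plan is to obtain the Weitzenb\"ock formula as an immediate specialization of the general commutation identity \eqref{eq:weitgen}, applied to $a = f$ regarded as a section of the trivial line bundle $E = M \times \mathbb{R}$. With this choice the bundle connection is just ordinary differentiation of functions, so $\nabla a = df$, and the bundle curvature $R^E$ vanishes identically. The two terms in \eqref{eq:weitgen} that involve $R^E$, namely $(\nabla \cdot R^E)_X a$ and $2\tr R^E(X,\cdot)\nabla_\cdot a$, therefore drop out.

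Next I would verify that the remaining objects match the statement. Since $\nabla^2 f = \nabla df$ viewed as a $(0,2)$-tensor, the operator $\square = \tr \nabla^2$ applied to $f$ gives $\square f = \tr \nabla df = \Delta f$, so the left-hand side of \eqref{eq:weitgen} evaluated against $X$ becomes $\nabla_X \Delta f = X(\Delta f) = (d\Delta f)(X)$. On the right-hand side, $\nabla a = df$, whence $(\square \nabla a)(X) = (\tr \nabla^2 df)(X)$, while $(\nabla a)(\Ric^\sharp(X)) = (df)(\Ric^\sharp(X))$. Combining these gives
\begin{equation}
(d\Delta f)(X) = (\tr \nabla^2 df)(X) - (df)(\Ric^\sharp(X))
\end{equation}
for every vector field $X$, which is the claimed identity.

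There is no real obstacle here: the content of the lemma is already packaged inside \eqref{eq:weitgen}, and the only thing to check is the triviality of the bundle curvature in the scalar case, together with the bookkeeping $\square f = \Delta f$ and $\nabla f = df$. Alternatively, if one preferred a self-contained derivation, the same identity follows by tracing the Ricci-identity commutator of Lemma \ref{lem:commutator2} (with $Z$ replaced by a parallel frame vector and then summed), using that $\Hess f = \nabla df$ is symmetric and that $\tr R(\cdot,X)\cdot = \Ric(X,\cdot)^\sharp$; but the route through \eqref{eq:weitgen} is cleaner and fits the framework already set up in this section.
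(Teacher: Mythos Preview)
Your proof is correct and follows exactly the paper's approach: the paper's proof consists of the single sentence that the lemma follows from \eqref{eq:weitgen} by taking $E = C^\infty(M)$, since then $R^E = 0$ and $\nabla^E = d$. Your write-up simply unpacks this in more detail.
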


\begin{proof}
This follows from equation \eqref{eq:weitgen} by considering the case where $E = C^\infty(M)$, since in this case $R^E = 0$ and $\n^E=d$.
\end{proof}

Consider the operator $d^\star R$ defined by
\begin{equation}\label{eq:dstarR}
(d^\star R)(v_1,v_2) := -\tr \nabla_\cdot R(\cdot,v_1)v_2
\end{equation}
which satisfies
\begin{equation}
\langle d^\star R(v_1,v_2),v_3\rangle =  \langle (\nabla_{v_3} {\Ric}^\sharp) (v_1),v_2\rangle- \langle (\nabla_{v_2} {\Ric}^\sharp) (v_3),v_1\rangle
\end{equation}
for all $v_1,v_2,v_3 \in T_xM$ and $x \in M$. In particular, this operator vanishes on Ricci parallel manifolds. In terms of $d^\star R$, the next lemma expresses our second Weitzenb\"{o}ck-type formula.

\begin{lemma}\label{lem:weitzenbock2}
For any vector field $X$ we have
\begin{align}
\n_X \square df =\,& (\square \n df)(X) -(\n df)({{\Ric}^\sharp (X)})\\
& - (df)((d^\star R) (X,\cdot)) + 2 \tr  (\n_{\cdot} df)(R(\cdot,X))
\end{align}
where $d^\star R$ is defined by \eqref{eq:dstarR}.
\end{lemma}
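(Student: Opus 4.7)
The plan is to apply the general Weitzenböck-type formula \eqref{eq:weitgen} with $E = T^\star M$, equipped with the connection induced by the Levi-Civita connection, and the section $a = df$. This is exactly the setting that produces a commutation formula for $\square$ and $d$, so the identity should follow by specialising and then identifying the curvature-of-$E$ terms with the intrinsic tensors $R$ and $d^\star R$.

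First I would substitute $E = T^\star M$ and $a = df$ into \eqref{eq:weitgen} to obtain
\begin{equation*}
\nabla_X \square df = (\square \nabla df)(X) - (\nabla df)({\Ric}^\sharp(X)) - (\nabla\cdot R^E)_X df + 2\tr R^E(X,\cdot)\nabla_\cdot df.
\end{equation*}
The first two terms already match the claim verbatim, so the work is to rewrite the last two terms in terms of $R$ and $d^\star R$. To do this I would recall that the induced connection on $T^\star M$ has curvature acting on a $1$-form $\omega$ by $(R^E(Y,Z)\omega)(V) = -\omega(R(Y,Z)V)$, with $(\nabla_W R^E)$ then acting as tensorial differentiation of $R$.

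Next, evaluating $(\nabla\cdot R^E)_X df$ at an arbitrary vector $v$ in an orthonormal frame $\{e_i\}$ gives
\begin{equation*}
((\nabla\cdot R^E)_X df)(v) = -\sum_i (df)\bigl((\nabla_{e_i} R)(e_i, X) v\bigr),
\end{equation*}
which by the defining formula \eqref{eq:dstarR} is exactly $(df)\bigl((d^\star R)(X,v)\bigr)$. Putting this back with the minus sign produces the third term $-(df)((d^\star R)(X,\cdot))$ of the statement. For the last term, the same frame expansion gives
\begin{equation*}
\bigl(2\tr R^E(X,\cdot)\nabla_\cdot df\bigr)(v) = -2\sum_i (\nabla_{e_i} df)(R(X,e_i) v) = 2\sum_i (\nabla_{e_i} df)(R(e_i,X) v),
\end{equation*}
using antisymmetry of $R$ in its first two arguments, and this is precisely $2\tr(\nabla_\cdot df)(R(\cdot, X))$.

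The step I expect to require the most care is the identification of $(\nabla\cdot R^E)_X df$ with $df\circ(d^\star R)(X,\cdot)$: the induced curvature on $T^\star M$ contributes one minus sign, the definition \eqref{eq:dstarR} carries another, and \eqref{eq:weitgen} has a third, so tracking conventions through is the only subtle part. Once these signs are settled, everything else is just tensorial bookkeeping in an orthonormal frame.
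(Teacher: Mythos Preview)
Your proposal is correct and follows essentially the same approach as the paper: specialise \eqref{eq:weitgen} to $E = T^\star M$ with $a = df$, use the identity $(R^{T^\star M}(Y,Z)\omega)(V) = -\omega(R(Y,Z)V)$ to rewrite the curvature terms, and then identify them with $d^\star R$ and $R(\cdot,X)$ via the sign conventions you outlined. The paper carries out exactly this computation, first for a general $1$-form $a$ and then substituting $a = df$ at the end.
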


\begin{proof}
If $a \in \Gamma(T^\star M)$ with $X$ a vector field then by equation \eqref{eq:weitgen} with $E = T^\star M$ we have
\begin{equation}
\n_X \square a =  (\square \n a)(X) -(\n a)({\Ric}^\sharp (X)) - (\n \cdot R^{T^\star M})_X a + 2 \tr R^{T^\star M}(X,\cdot) \n_{\cdot} a.
\end{equation}
Moreover if $Y$ and $V$ are vector fields then $R^{T^\star M}$ satisfies
\begin{equation}
(R^{T^\star M}(X,Y) a)(V) = - a(R(X,Y)V)
\end{equation}
and therefore
\begin{equation}
(\n \cdot R^{T^\star M})_X a= \tr (\n_{\cdot} R^{T^\star M})(\cdot,X) a= -a(\tr (\n_{\cdot} R)(\cdot,X))
\end{equation}
and
\begin{equation}
\tr R^{T^\star M}(X,\cdot) \n_{\cdot} a = -\tr (\n_{\cdot} a)(R(X,\cdot))
\end{equation}
and therefore
\begin{align}
\n_X \square a =\,&  (\square \n a)(X) -(\n a)({{\Ric}^\sharp (X)})\\
&+ a(\tr (\n_{\cdot} R)(\cdot,X)) - 2 \tr  (\n_{\cdot} a)(R(X,\cdot)).
\end{align}
The result follows from this using the definition of $d^\star R$ and by setting $a = df$.
\end{proof}

Continuing, we next apply equation \eqref{eq:weitgen} to the case $E = T^\star M \otimes T^\star M$.

\begin{lemma}\label{lem:weitzenbock3}
For any vector fields $X,V$ and $W$ we have
\begin{align}
&(\n_X \square \n df)(V,W)\\
=\,& (\square \n \n df)(X,V,W) - (\n \n df) ({\Ric}^\sharp (X),V,W)\\
&  -\n df ( (d^\star R)(X,V),W) - \n df (V,(d^\star R)(X,W))\\
& -2\tr (\n_\cdot \n df)(R(X,\cdot)V,W)- 2\tr (\n_\cdot \n df)(V,R(X,\cdot)W)
\end{align}
where $d^\star R$ is defined by \eqref{eq:dstarR}.
\end{lemma}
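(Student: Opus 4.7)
The plan is to deduce this from the general Weitzenböck-type commutation formula \eqref{eq:weitgen} applied to the bundle $E = T^\star M \otimes T^\star M$, with section $a = \nabla df$, in exactly the same style as the proof of Lemma~\ref{lem:weitzenbock2}. Substituting directly into \eqref{eq:weitgen} immediately yields the first two terms on the right-hand side, since $\square \nabla a$ applied to $(X,V,W)$ is just $(\square \nabla \nabla df)(X,V,W)$ and $(\nabla a)(\Ric^\sharp(X))$ becomes $(\nabla \nabla df)(\Ric^\sharp(X), V, W)$. What then remains is to identify the two curvature terms $-(\nabla \cdot R^E)_X a$ and $2\tr R^E(X,\cdot)\nabla_\cdot a$ in terms of the Riemann tensor and $d^\star R$.

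The key observation is that the induced connection on $T^\star M \otimes T^\star M$ gives a curvature tensor that acts as a derivation on the tensor product: if $T$ is a $(0,2)$-tensor and $Y,Z$ are vector fields, then
\begin{equation}
(R^{T^\star M \otimes T^\star M}(Y,Z) T)(V,W) = -T(R(Y,Z)V,W) - T(V,R(Y,Z)W).
\end{equation}
This is the crucial extension of the single-factor identity $(R^{T^\star M}(Y,Z)a)(V) = -a(R(Y,Z)V)$ used in the proof of Lemma~\ref{lem:weitzenbock2}, and it follows from the Leibniz rule for the induced connection on tensor products. Applying this in each of the two slots, together with the trace identity
\begin{equation}
\tr (\nabla_\cdot R)(\cdot, X) Y = -(d^\star R)(X,Y)
\end{equation}
that comes directly from the definition \eqref{eq:dstarR}, transforms $-(\nabla \cdot R^E)_X (\nabla df)$ at $(V,W)$ into
\begin{equation}
-\nabla df((d^\star R)(X,V),W) - \nabla df(V,(d^\star R)(X,W)),
\end{equation}
and the remaining term $2\tr R^E(X,\cdot)\nabla_\cdot \nabla df$ at $(V,W)$ into
\begin{equation}
-2\tr (\nabla_\cdot \nabla df)(R(X,\cdot)V,W) - 2\tr (\nabla_\cdot \nabla df)(V,R(X,\cdot)W),
\end{equation}
which are exactly the four curvature contributions in the claimed formula.

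The steps, in order, will be: (i) state \eqref{eq:weitgen} for the bundle $E = T^\star M \otimes T^\star M$; (ii) verify the derivation formula for $R^E$ on tensor products, either by a short direct computation or by invoking the Leibniz rule for induced connections; (iii) convert $(\nabla\cdot R^E)_X$ into a statement about $\tr(\nabla_\cdot R)(\cdot,X)$ acting in each slot, and rewrite this via $d^\star R$; (iv) convert $\tr R^E(X,\cdot)\nabla_\cdot a$ into the two trace-Riemann expressions, again using the derivation property; and (v) assemble the pieces and evaluate at $(V,W)$. I do not anticipate any serious obstacle here: the whole calculation is a mechanical extension of Lemma~\ref{lem:weitzenbock2}, and the only point requiring care is keeping the signs and slot positions straight when the curvature derivation acts on the second tensor factor.
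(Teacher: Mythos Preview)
Your proposal is correct and follows essentially the same approach as the paper: apply \eqref{eq:weitgen} with $E=T^\star M\otimes T^\star M$, use the derivation property of the induced curvature to expand $R^E$ in each slot, and then rewrite the divergence term via the definition of $d^\star R$. The paper carries this out for a general section $b\in\Gamma(T^\star M\otimes T^\star M)$ before specializing to $b=\nabla df$, but otherwise the computation and the handling of signs are identical to what you describe.
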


\begin{proof}
If $b \in \Gamma(T^\star M \otimes T^\star M)$ with $X,Y,V$ and $W$ vector fields then
\begin{equation}
(R^{T^\star M \otimes T^\star M}(X,Y) b)(V,W) = -b(R(X,Y)V,W) - b(V,R(X,Y)W)
\end{equation}
and consequently
\begin{align}
((\n \cdot R^{T^\star M \otimes T^\star M})_X b)(V,W) =\,& (\tr (\n_{\cdot} R^{T^\star M \otimes T^\star M})(\cdot,X)b)(V,W)\\
=\,& -\tr b ( \n_{\cdot} R(\cdot,X)V,W)-\tr b (V,\n_{\cdot} R(\cdot,X)W)
\end{align}
and also
\begin{align}
(\tr R^{T^\star M \otimes T^\star M}(X,\cdot) \n_{\cdot} b)(V,W) =\,& - \tr (\n_\cdot b)(R(X,\cdot)V,W)\\
&- \tr (\n_\cdot b)(V,R(X,\cdot)W).
\end{align}
Therefore equation \eqref{eq:weitgen} becomes
\begin{align}
(\n_X \square b)(V,W) =\,&  (\square \n b)(X,V,W) -(\n b) ({\Ric}^\sharp X,V,W)\\
& - ((\n \cdot R^{T^\star M \otimes T^\star M})_X b)(V,W)\\
& + 2 (\tr R^{T^\star M \otimes T^\star M}(X,\cdot) \n_{\cdot} b)(V,W)\\
=\,& (\square \n b)(X,V,W) - (\n b) ({\Ric}^\sharp X,V,W)\\
&  +\tr b ( \n_{\cdot} R(\cdot,X)V,W)+\tr b (V,\n_{\cdot} R(\cdot,X)W)\\
& -2\tr (\n_\cdot b)(R(X,\cdot)V,W)- 2\tr (\n_\cdot b)(V,R(X,\cdot)W).
\end{align}
The result follows from this by setting $b = \n df$.
\end{proof}

Now that we have Lemmas \ref{lem:commutator1}, \ref{lem:commutator2}, \ref{lem:commutator3}, \ref{lem:weitzenbock1}, \ref{lem:weitzenbock2} and \ref{lem:weitzenbock3}, we can proceed in the next section to calculate formulas for the first three derivatives of the semigroup generated by $\A$.

\section{Derivative formulas}\label{sec:11}

For $f \in C^1_b(M)$ and $t>0$ set $f_s := P_{t-s}f$ for $s \in [0,t]$. For $u \in T_xM$ denote by $W_s(u)$ the solution, along the paths of $X(x)$, to the covariant ordinary differential equation
\begin{equation}\label{eq:eqfordpt}
D W_s(u) = - \frac{1}{2}{\Ric}^\sharp_Z W_s(u)ds
\end{equation}
with $W_0(u) = u$.

\begin{lemma}\label{lem:Nlocmart}
For $u \in T_xM$ set
\begin{equation}
N_s(u) := df_s(W_s(u)).
\end{equation}
Then $N_s(u)$ is local martingale.
\end{lemma}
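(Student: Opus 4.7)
\textbf{Proof proposal for Lemma \ref{lem:Nlocmart}.} The plan is to compute the Itô differential of $N_s(u)=df_s(W_s(u))$ using the covariant product rule and then verify that the drift vanishes identically, leaving only a local-martingale term. Since $W_s(u)$ solves the covariant ODE \eqref{eq:eqfordpt}, it has no martingale part along $X(x)$. Therefore, if we let $U_s$ be the horizontal lift of $X(x)$ to the orthonormal frame bundle, so that $dX_s=U_s\,dB_s+Z(X_s)\,ds$ for a Euclidean Brownian motion $B_s$, the covariant Itô product rule gives
\begin{equation}
d\bigl(df_s(W_s(u))\bigr) = (Ddf_s)(W_s(u)) + df_s\bigl(DW_s(u)\bigr).
\end{equation}

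The first step will be to compute $D(df_s)$ along $X(x)$. Using the standard Itô formula for a time-dependent one-form along a diffusion, together with $\partial_s f_s = -\mathcal{A}f_s$, I would write
\begin{equation}
D(df_s) = -d(\mathcal{A}f_s)\,ds + (\nabla df_s)(U_s\,dB_s,\cdot) + (\nabla_Z df_s)\,ds + \tfrac{1}{2}\square df_s\,ds,
\end{equation}
where $\square=\tr\nabla^2$. The key algebraic step is to expand $d(\mathcal{A}f_s)=\tfrac{1}{2}d(\Delta f_s)+d(Z(f_s))$ by invoking Lemma \ref{lem:weitzenbock1} (Weitzenböck) for the first summand and Lemma \ref{lem:commutator1} for the second. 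This yields
\begin{equation}
d(\mathcal{A}f_s) = \tfrac{1}{2}\square df_s + \nabla_Z df_s - \tfrac{1}{2}df_s\circ\bigl(\Ric^\sharp - 2\nabla Z\bigr) = \tfrac{1}{2}\square df_s + \nabla_Z df_s - \tfrac{1}{2}df_s\circ\Ric^\sharp_Z,
\end{equation}
after which the $\tfrac{1}{2}\square df_s$ and $\nabla_Z df_s$ contributions cancel and one is left with
\begin{equation}
D(df_s) = (\nabla df_s)(U_s\,dB_s,\cdot) + \tfrac{1}{2}\,df_s\circ\Ric^\sharp_Z\,ds.
\end{equation}

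The second step is to combine this with the ODE $DW_s(u) = -\tfrac{1}{2}\Ric^\sharp_Z W_s(u)\,ds$. Substituting into the product rule, the two $\Ric^\sharp_Z$ drift terms cancel exactly:
\begin{equation}
d\bigl(df_s(W_s(u))\bigr) = (\nabla df_s)_{X_s}(U_s\,dB_s,W_s(u)) + \tfrac{1}{2}df_s(\Ric^\sharp_Z W_s(u))\,ds - \tfrac{1}{2}df_s(\Ric^\sharp_Z W_s(u))\,ds.
\end{equation}
Only the $dB_s$-term remains, so $N_s(u)$ is a local martingale, as required.

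The main obstacle is really just book-keeping: one must keep signs consistent when passing between the one-form $df_s$ and the gradient vector field, between $\partial_s$ and $-\mathcal{A}$, and between the Itô drift $\tfrac{1}{2}\square df_s$ from the martingale part of $X$ and the horizontal-drift contribution $\nabla_Z df_s$. The Weitzenböck identity of Lemma \ref{lem:weitzenbock1} together with the commutator of Lemma \ref{lem:commutator1} are precisely what is needed to recombine these contributions into the Bakry–Emery tensor $\Ric^\sharp_Z$, which is by design what $W_s(u)$ is built to offset.
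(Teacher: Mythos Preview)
Your proof is correct and follows essentially the same approach as the paper: both apply It\^{o}'s formula to $df_s(W_s(u))$, invoke Lemmas \ref{lem:commutator1} and \ref{lem:weitzenbock1} to rewrite $d(\mathcal{A}f_s)$ (equivalently $\partial_s df_s$) so that the drift of $D(df_s)$ collapses to $\tfrac{1}{2}df_s\circ\Ric^\sharp_Z$, and then observe that this is exactly cancelled by $df_s(DW_s(u))$ from the ODE \eqref{eq:eqfordpt}. The paper's presentation is merely more compressed, writing the It\^{o} differential directly as $(\nabla df_s)(//_s dB_s, W_s(u)) + df_s(DW_s(u)) + (\partial_s + \tfrac{1}{2}\square + \nabla_Z)(df_s)(W_s(u))\,ds$ and noting that the last two terms cancel.
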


\begin{proof}
Using the relations
\begin{align}
d(Zf) &= \n_Z df + df(\n Z),\\
d \Delta f &= \square df - df({\Ric}^\sharp)
\end{align}
given by Lemmas \ref{lem:commutator1} and \ref{lem:weitzenbock1}, It\^{o}'s formula implies
\begin{equation}\label{eq:locmart1}
\begin{split}
dN_s(u) =\,& (\nabla d f_s)(//_sdB_s,W_s(u)) + df_s(DW_s(u))\\
&+\left(\partial_s+\tfrac{1}{2}\square + \nabla_Z\right)(df_s)(W_s(u))ds\\
=\,& (\nabla d f_s)(//_sdB_s,W_s(u)).
\end{split}
\end{equation}
In particular, $N_s(u)$ is a local martingale.
\end{proof}

\begin{theorem}\label{thm:locformone}
Suppose $\Ric_Z$ is bounded below with $f \in C^1_b(M)$. Then
\begin{equation}\label{eq:formulalocone}
(dP_tf)(u)  = \E[(df)(W_t(u))]
\end{equation}
for all $u \in T_xM$, $x\in M$ and $t\geq 0$.
\end{theorem}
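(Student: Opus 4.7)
The plan is to upgrade the local martingale property from Lemma \ref{lem:Nlocmart} into a genuine martingale identity by establishing uniform bounds on $N_s(u) := df_s(W_s(u))$ on the interval $[0,t]$, and then reading off the formula by comparing the expectations at $s=0$ and $s=t$. Note first that since $\Ric_Z$ is bounded below, the diffusion $X(x)$ is non-explosive (as noted in the paper before Theorem \ref{thm:exponentialconv}), so $N_s(u)$ is well-defined on the full interval.

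The key steps are as follows. First, I would control $|W_s(u)|$. Differentiating along the covariant ODE \eqref{eq:eqfordpt} gives
\begin{equation}
\frac{d}{ds}|W_s(u)|^2 = 2\langle W_s(u),DW_s(u)/ds\rangle = -\Ric_Z(W_s(u),W_s(u)),
\end{equation}
so if $\Ric_Z \geq -C$ for some $C \in \R$ then $|W_s(u)|^2 \leq e^{Cs}|u|^2$ by Gronwall. Hence $|W_s(u)|$ is bounded deterministically on $[0,t]$. Second, by Theorem \ref{thm:exponentialconv}(ii) applied with the lower bound $\Ric_Z \geq 2K$ (possibly $K<0$), we have $\|\n P_{t-s}f\|_\infty \leq e^{-K(t-s)}\|\n f\|_\infty$, which is finite since $f\in C^1_b(M)$. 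Combining these, $|N_s(u)| \leq \|df_s\|_\infty |W_s(u)|$ is uniformly bounded by a deterministic constant on $[0,t]$.

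A bounded local martingale is a true martingale, so $\E[N_0(u)] = \E[N_t(u)]$. Since $W_0(u)=u$ is deterministic and $f_0 = P_tf$, the left-hand side equals $(dP_tf)(u)$, while the right-hand side equals $\E[df(W_t(u))]$ because $f_t = f$. This gives \eqref{eq:formulalocone}.

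The main obstacle is the passage from local to true martingale, which in more general settings requires a more delicate argument via localization and dominated convergence. Here, however, the combination of the a priori Gronwall bound on $W_s(u)$ (afforded by the lower bound on $\Ric_Z$) and the uniform bound on $\|\n P_{t-s}f\|_\infty$ (afforded by Theorem \ref{thm:exponentialconv}) makes $N_s(u)$ outright bounded, so no stopping-time argument is needed beyond this observation.
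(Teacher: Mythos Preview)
Your proposal is correct and follows essentially the same approach as the paper: the paper's proof simply asserts that the lower bound on $\Ric_Z$ forces $N_s(u)$ to be bounded, hence a true martingale on $[0,t]$, and then equates expectations at the endpoints. You have spelled out precisely the two ingredients behind that boundedness claim---the Gronwall bound on $|W_s(u)|$ from \eqref{eq:eqfordpt} and the uniform control of $\|dP_{t-s}f\|_\infty$ via Theorem~\ref{thm:exponentialconv}---so your argument is a fleshed-out version of the same idea rather than a different route.
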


\begin{proof}
Since $\Ric_Z$ is bounded below it follows that the local martingale $N_s(u)$ is bounded and therefore a martingale on $[0,t]$. Therefore
\begin{equation}
\E[N_0(u)] = \E[N_t(u)]
\end{equation}
which yields the formula by the definition of $N_s(u)$.
\end{proof}

Next we prove a formula for the second derivative. For each $u,v \in T_{x}M$ define $W'_s(u,v)$ along the paths of $X(x)$ by
\begin{equation}
\begin{split}
W'_s(u,v):=\,& W_s \int_0^s W_r^{-1} R(//_r dB_r,W_r(u))W_r (v)\\
&-\frac{1}{2}  W_s \int_0^s W_r^{-1}(\nabla {\Ric}^\sharp_Z + d^\star R -2R(Z))(W_r (u),W_r(v))dr
\end{split}
\end{equation}
where $W_s$ is defined as in equation \eqref{eq:eqfordpt} with $W_0 = \id_{T_xM}$. Here $d^\star R$ is minus the divergence of $R$ as defined in \eqref{eq:dstarR}. The process $W^{\prime}_s(u,v)$ is the solution to the covariant It\^{o} equation
\begin{equation}\label{eq:eqforw'}
\begin{split}
DW^{\prime}_s(u,v) =\text{ }& R(//_sdB_s,W_s( u ))W_s(v)\\
& -\tfrac{1}{2} (d^\star R -2 R(Z) +\nabla {\Ric}^\sharp_Z) (W_s( u),W_s(v))ds\\
&-\tfrac{1}{2} {\Ric}^\sharp_Z (W^{\prime}_s(u,v))ds
\end{split}
\end{equation}
with $W^{\prime}_0(u,v) = 0$.

\begin{lemma}\label{lem:N'locmart}
For $u,v \in T_xM$ set
\begin{equation}
N^{\prime}_s(u,v) := (\nabla d f_s)(W_s(u),W_s(v)) + (df_s)(W^{\prime}_s (u,v)).
\end{equation}
Then $N^{\prime}_s(u,v)$ is local martingale.
\end{lemma}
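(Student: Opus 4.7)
The strategy mirrors the proof of Lemma \ref{lem:Nlocmart}: apply the covariant It\^o formula to each of the two summands in $N^\prime_s(u,v)$, then use the commutation and Weitzenb\"ock formulas from Lemmas \ref{lem:commutator2} and \ref{lem:weitzenbock2} to verify that all finite-variation parts cancel. The guiding principle is that the curvature terms appearing on the right of \eqref{eq:eqforw'} have been engineered to absorb precisely the extra contributions generated when the commutation analysis of Lemma \ref{lem:Nlocmart} is repeated one derivative higher.

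For the summand $(df_s)(W^\prime_s(u,v))$, the It\^o formula would produce three contributions. The heat drift $(\partial_s + \tfrac{1}{2}\square + \nabla_Z)(df_s)(W^\prime_s)\,ds$ equals $\tfrac{1}{2}(df_s)(\Ric^\sharp_Z W^\prime_s)\,ds$, exactly as in the proof of Lemma \ref{lem:Nlocmart}. The covariant drift of $W^\prime_s$ contributes $-\tfrac{1}{2}(df_s)(\Ric^\sharp_Z W^\prime_s)\,ds$, which cancels the previous term, together with the correction $-\tfrac{1}{2}(df_s)\bigl((d^\star R -2R(Z) + \nabla \Ric^\sharp_Z)(W_s(u),W_s(v))\bigr)\,ds$. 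Finally, the It\^o cross-bracket between the martingale part $(\nabla df_s)(//_s dB_s,\cdot)$ of $df_s$ and the martingale part $R(//_s dB_s, W_s(u))W_s(v)$ of $W^\prime_s$ generates a trace term of the form $\tr (\nabla_\cdot df_s)(R(\cdot, W_s(u))W_s(v))\,ds$.

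For the summand $(\nabla df_s)(W_s(u), W_s(v))$, only the heat drift and the two $DW_s$ drifts contribute, since $W_s(u)$ and $W_s(v)$ are of covariant bounded variation by \eqref{eq:eqfordpt}. The $DW_s$ pieces produce $-\tfrac{1}{2}(\nabla df_s)(\Ric^\sharp_Z W_s(u), W_s(v))\,ds -\tfrac{1}{2}(\nabla df_s)(W_s(u), \Ric^\sharp_Z W_s(v))\,ds$. To evaluate the heat drift $\bigl[(\partial_s + \tfrac{1}{2}\square + \nabla_Z)(\nabla df_s)\bigr](W_s(u),W_s(v))$, I would differentiate the identity $(\partial_s + \tfrac{1}{2}\square + \nabla_Z)(df_s) = \tfrac{1}{2}(df_s)(\Ric^\sharp_Z)$ covariantly once more, swapping $\nabla$ past $\nabla_Z$ via Lemma \ref{lem:commutator2} and past $\square$ via Lemma \ref{lem:weitzenbock2}. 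This is precisely where the term $\tfrac{1}{2}(d^\star R -2R(Z) + \nabla \Ric^\sharp_Z)$ contracted with $(W_s(u), W_s(v))$ first appears, together with the Weitzenb\"ock trace contribution $-2\tr (\nabla_\cdot df_s)(R(\cdot, W_s(u))W_s(v))$ and symmetric $\Ric^\sharp_Z$ insertions in each slot of $\nabla df_s$.

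Summing the two expansions, the curvature insertions chosen in \eqref{eq:eqforw'} annihilate the corresponding terms produced by the Weitzenb\"ock and Ricci identity commutators; the It\^o cross-bracket cancels the remaining trace term from Lemma \ref{lem:weitzenbock2}; and the $\Ric^\sharp_Z$ insertions into $\nabla df_s$ absorb the contributions from $DW_s(u)$ and $DW_s(v)$ after symmetrising over $u$ and $v$. Every finite-variation term then vanishes, so $N^\prime_s(u,v)$ reduces to a sum of stochastic integrals against $//_s dB_s$ of the form $(\nabla\nabla df_s)(//_s dB_s, W_s(u), W_s(v)) + (df_s)(R(//_s dB_s, W_s(u))W_s(v))$, and is therefore a local martingale. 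The main obstacle is the careful bookkeeping of signs, traces and tensor slots: one must check that the coefficient $\tfrac{1}{2}$ in front of $d^\star R -2R(Z) + \nabla \Ric^\sharp_Z$ in \eqref{eq:eqforw'} and the coefficient $2$ in the trace-$R$ piece of Lemma \ref{lem:weitzenbock2} have exactly the right magnitudes, and that symmetrising over $(u,v)$ produces the correct $\nabla \Ric^\sharp_Z$ contraction.
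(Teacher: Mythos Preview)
Your approach is essentially the paper's: apply the covariant It\^o formula to both summands, expand $\partial_s(\nabla df_s)$ via Lemmas \ref{lem:commutator2} and \ref{lem:weitzenbock2}, and verify that the drifts cancel against the engineered terms in \eqref{eq:eqforw'}. Two small corrections to your bookkeeping are worth flagging. First, the trace term you quote from Lemma \ref{lem:weitzenbock2} carries a coefficient $2$, but in $\partial_s(\nabla df_s)$ it is multiplied by the $-\tfrac{1}{2}$ in front of $\square$, so the contribution to the heat drift of $\nabla df_s$ is $-\tr(\nabla_\cdot df_s)(R(\cdot,W_s(u))W_s(v))$, not $-2\tr(\ldots)$; this is precisely what the cross-bracket $+\tr(\ldots)$ cancels. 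Second, your list of surviving stochastic integrals omits $(\nabla_{//_s dB_s} df_s)(W^\prime_s(u,v))$, which arises from the martingale part of $d(df_s)$ acting on $W^\prime_s$; the paper's proof records three martingale terms, not two.
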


\begin{proof}
As in the proof of Lemma \ref{lem:Nlocmart} we have
\begin{align}
\partial_s df_s = d(\partial_s f_s)= -  d\A f_s= -(\tfrac{1}{2}\square + \n_Z) df_s + \tfrac{1}{2}df_s({\Ric}_Z^\sharp)
\end{align}
and therefore, by the relations given by Lemmas \ref{lem:commutator2} and \ref{lem:weitzenbock2}, we have
\begin{align}
&(\partial_s (\nabla d f_s))(v_1,v_2)\\
=\,& (\nabla \partial_s  d f_s)(v_1,v_2)\\
=\,& -(\tfrac{1}{2}\n \square + \n \n_Z)df_s(v_1,v_2) + \tfrac{1}{2}\n_{v_1}(df_s({\Ric}_Z^\sharp))(v_2)\\
=\,& -(\tfrac{1}{2}\n \square + \n \n_Z)df_s(v_1,v_2) + \tfrac{1}{2}(\n_{v_1} df_s)({\Ric}_Z^\sharp (v_2))\\
& + \tfrac{1}{2}(df_s)((\n_{v_1}{\Ric}_Z^\sharp)(v_2))\\
=\,& - ((\tfrac{1}{2}\square + \n_Z) \n df_s)(v_1,v_2) + \tfrac{1}{2}(\n df_s)(v_1,{\Ric}_Z^\sharp (v_2))\\
& + \tfrac{1}{2}(df_s)((\n_{v_1}{\Ric}_Z^\sharp)(v_2))+\tfrac{1}{2}(\n df_s)({\Ric}^\sharp (v_1),v_2)\\
&+ \tfrac{1}{2}(df_s)((d^\star R) (v_1,v_2)) - \tr  (\n_{\cdot} df_s)(R(\cdot,v_1)v_2)\\
&- (df_s)(R(Z)(v_1,v_2)) - (\n df_s)(\n_{v_1} Z,v_2)
\end{align}
at each $x \in M$ with $v_1,v_2 \in T_xM$. Consequently by It\^{o}'s formula we see that
\begin{equation}
\begin{split}
&dN^{\prime}_s(u,v)\\
 =\,& (\nabla_{//_s dB_s} \nabla d f_s)(W_s(u),W_s(v))\\
& + (\nabla d f_s)\left(DW_s(u),W_s(v)\right) + (\nabla d f_s)\left(W_s(u),DW_s(v)\right)\\
& + \left(\partial_s+\tfrac{1}{2}\square + \nabla_Z\right) (\nabla d f_s)(W_s(u),W_s(v))ds\\
& + (\nabla_{//_sdB_s} df_s)(W^{\prime}_s(u,v)) + (df_s)\left(DW^{\prime}_s(u,v)\right) + [ d(df),DW^{\prime}(u,v)]_s\\
& + \left(\partial_s + \tfrac{1}{2}\square + \nabla_Z\right)(df_s)(W^{\prime}_s(u,v))ds\\
=\,& (\nabla_{//_s dB_s} \nabla d f_s)(W_s(u),W_s(v)) + (\nabla_{//_sdB_s} df_s)(W^{\prime}_s(u,v))\\
& + (df_s)(R(//_s dB_s,W_s(u))W_s(v))
\end{split}
\end{equation}
for which we calculated
\begin{equation}
\left[d(df),DW^{\prime}(u,v)\right]_s = \tr (\n_{\cdot} df_s)(R(\cdot,W_s(u))W_s(v))ds.
\end{equation}
It follows that $N_s'(u,v)$ is, in particular, a local martingale.
\end{proof}

\begin{theorem}
If the local martingale $N'_s(u,v)$ is a martingale with $f \in C^2_b(M)$ then
\begin{equation}
(\n d P_t f)(u,v) = \E[ (\n df)(W_t(u),W_t(v)) + (df)(W'_t(u,v))]
\end{equation}
for all $u,v \in T_xM$, $x \in M$ and $t \geq 0$.
\end{theorem}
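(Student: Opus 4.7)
The plan is to mimic exactly the argument used in the proof of Theorem \ref{thm:locformone}, now applied to the local martingale $N'_s(u,v)$ from Lemma \ref{lem:N'locmart}. Since the hypothesis of the theorem already promotes $N'_s(u,v)$ from a local martingale to a genuine martingale on $[0,t]$, all the analytical heavy lifting (in particular, verifying integrability or constructing a localizing sequence that closes) is absorbed into the assumption. What remains is simply to equate expectations at the two endpoints $s=0$ and $s=t$ and identify each side with the quantity appearing in the formula.

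First I would unpack the endpoint $s=0$. By the definitions, $f_0 = P_{t}f$, $W_0(u) = u$, $W_0(v) = v$, and $W'_0(u,v) = 0$ (the latter from the initial condition accompanying equation \eqref{eq:eqforw'}). Therefore
\begin{equation}
N'_0(u,v) = (\nabla d P_t f)(u,v) + (d P_t f)(0) = (\nabla d P_t f)(u,v),
\end{equation}
and this quantity is deterministic, so $\E[N'_0(u,v)] = (\nabla d P_t f)(u,v)$.

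Next I would unpack the endpoint $s=t$. Here $f_t = f$, so
\begin{equation}
N'_t(u,v) = (\nabla d f)(W_t(u), W_t(v)) + (df)(W'_t(u,v)).
\end{equation}
The martingale property of $N'_s(u,v)$ on $[0,t]$ then gives $\E[N'_0(u,v)] = \E[N'_t(u,v)]$, and combining with the two endpoint identifications above yields exactly the claimed formula.

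There is no genuine obstacle: the real work was done in Lemma \ref{lem:N'locmart} (the It\^o calculation using the commutation relations of Lemma \ref{lem:commutator2} and the Weitzenb\"ock formula of Lemma \ref{lem:weitzenbock2}) and in the construction of $W'_s(u,v)$ as the solution of \eqref{eq:eqforw'} chosen precisely so that the drift terms cancel. The only point worth being careful about is checking the initial condition $W'_0(u,v) = 0$ so that the $(df_0)(W'_0(u,v))$ term vanishes at $s=0$; this is built into the definition of $W'_s(u,v)$ as a stochastic integral starting at zero.
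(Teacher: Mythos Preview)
Your proposal is correct and matches the paper's own proof essentially verbatim: the paper simply invokes Lemma \ref{lem:N'locmart} and equates $\E[N'_0(u,v)] = \E[N'_t(u,v)]$, which is exactly what you do, with the endpoint identifications spelled out.
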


\begin{proof}
This follows from Lemma \ref{lem:N'locmart} since if $N'_s(u,v)$ is a martingale then
\begin{equation}
\E[N'_0(u,v)] = \E[N'_t(u,v)]
\end{equation}
for all $t \geq 0$.
\end{proof}

Note that $N'_s(u,v)$ is a martingale if, for example, $M$ is compact since in this case all the curvature operators and terms involving $Z$ are necessarily bounded. Our next step is to integrate by parts to reduce the number of derivatives on the right-hand side of the formula by one. Doing so at the level of local martingales, we can localize the contribution of all curvature operators other than the Bakry-Emery-Ricci tensor. In particular, suppose $D$ is a regular domain with $x \in D$ and denote by $\tau$ the first exit time of $X(x)$ from $D$. That $D$ is a regular domain means that $D$ is a connected open domain with compact closure and smooth boundary. The localization is achieved via the introduction of a bounded adapted process $k$ with paths in the Cameron-Martin space $L^{1,2}([0,t];\Aut(T_{x}M))$ such that $k_s=0$ for $s \geq \tau \wedge t$.

\begin{lemma}\label{lem:ibp1}
Suppose $k$ is bounded adapted process with paths in the Cameron-Martin space $L^{1,2}([0,t];\Aut(T_{x}M))$. Then
\begin{align}
&(\nabla d f_s)(W_s(k_s u),W_s(v)) + (df_s)(W^{\prime}_s (k_s u,v)) \\
&- (df_s)(W_s(v))\int_0^s \langle W_r(\dot{k}_ru),//_rdB_r\rangle - \int_0^s  (df_r)(W^{\prime}_r (\dot{k}_r u,v))dr
\end{align}
is a local martingale, for each $u,v \in T_xM$.
\end{lemma}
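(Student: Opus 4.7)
The plan is to derive this lemma from the local martingale property of $N'_s(u,v)$ established in Lemma \ref{lem:N'locmart}, combined with an It\^{o}-type integration by parts against the local martingale $N_s(v)$ from Lemma \ref{lem:Nlocmart}. Since the defining equations for $W_s$ and $W'_s$ are linear in the first slot, one may regard $W_s(k_s u)$ and $W'_s(k_s u,v)$ as compositions of a semimartingale-valued linear operator (with the first slot frozen) with the absolutely continuous, finite-variation process $k_s u$; the standard product rule then supplies extra drift terms proportional to $\dot{k}_s u$ and no extra quadratic variation.

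First, I would apply It\^{o}'s formula to
\begin{equation}
N'_s(k_s u, v) = (\n df_s)(W_s(k_s u), W_s(v)) + (df_s)(W'_s(k_s u, v)).
\end{equation}
The part obtained by freezing $u' = k_s u$ reproduces the local martingale differential computed in the proof of Lemma \ref{lem:N'locmart}, while the additional drifts coming from $\dot{k}_s u$ combine, by linearity in the first slot, into exactly $N'_s(\dot{k}_s u, v)\,ds$. Thus $N'_s(k_s u, v) - \int_0^s N'_r(\dot{k}_r u, v)\,dr$ is a local martingale. Splitting $N'_r(\dot{k}_r u, v) = (\n df_r)(W_r(\dot{k}_r u), W_r(v)) + (df_r)(W'_r(\dot{k}_r u, v))$, the second summand already matches the last term of the claim, so it remains to replace the first summand by a quadratic-covariation expression.

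For that, I would set $Y_s := \int_0^s \<W_r(\dot{k}_r u), //_r dB_r\>$ and recall from Lemma \ref{lem:Nlocmart} that $dN_s(v) = (\n df_s)(//_s dB_s, W_s(v))$. The It\^{o} product rule gives $d(Y_s N_s(v)) = Y_s\,dN_s(v) + N_s(v)\,dY_s + d[Y, N(v)]_s$, and a short computation in any orthonormal basis $\{e^i\}$ of $\R^n$, using that $//_s$ is an isometry, yields
\begin{equation}
d[Y, N(v)]_s = \sum_i \<//_s^{-1} W_s(\dot{k}_s u), e^i\>(\n df_s)(//_s e^i, W_s(v))\,ds = (\n df_s)(W_s(\dot{k}_s u), W_s(v))\,ds.
\end{equation}
Consequently $Y_s N_s(v) - \int_0^s (\n df_r)(W_r(\dot{k}_r u), W_r(v))\,dr$ is a local martingale. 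Subtracting this from the local martingale in the first step, the two $(\n df)(W(\dot{k} u), W(v))$ drift integrals cancel and one is left with exactly the expression in the statement.

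The main obstacle is the Parseval-type reduction in the second step: one must recognize that $\sum_i \<a, //_s e^i\> //_s e^i = a$ for every $a \in T_{X_s}M$, because $//_s$ is an isometric isomorphism from $\R^n$ to $T_{X_s}M$, and then invoke bilinearity of $\n df_s$ to collapse the sum into the desired drift. Once this identification of the quadratic covariation is secured, the remainder of the argument is routine bookkeeping with the It\^{o} product rule and the linearity of $W_s$ and $W'_s$ in their first slots, with boundedness of $k$ ensuring that the various local martingales involved are indeed well-defined on $[0,t]$.
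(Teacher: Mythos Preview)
Your proposal is correct and follows essentially the same route as the paper's proof: both use linearity in the first slot to write $N'_s(k_su,v)-\int_0^s N'_r(\dot{k}_ru,v)\,dr$ as a local martingale, and then eliminate the $(\nabla df_r)(W_r(\dot{k}_ru),W_r(v))$ drift by an It\^{o} product rule (equivalently, integration by parts) against $N_s(v)=df_s(W_s(v))$, whose differential is $(\nabla df_s)(//_sdB_s,W_s(v))$. Your explicit Parseval computation of the quadratic covariation is exactly what the paper abbreviates as ``integration by parts at the level of local martingales''.
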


\begin{proof}
Since
\begin{equation}
d(N^{\prime}_s(k_s u,v)) = N'_s(\dot{k}_su,v)ds + dN'_s(k_su,v)
\end{equation}
it follows, by Lemma \ref{lem:N'locmart}, that
\begin{equation}\label{eq:loclmartprof1}
N^{\prime}_s(k_s u,v) - \int_0^s  (\nabla d f_r)(W_r(\dot{k}_r u),W_r(v)) dr - \int_0^s  (df_r)(W^{\prime}_r (\dot{k}_r u,v))dr
\end{equation}
is a local martingale. By equation \eqref{eq:locmart1} it follows that
\begin{equation}
df_s(W_s(v)) = df_0(v) + \int_0^s (\nabla d f_r)(//_rdB_r,W_r(v))
\end{equation}
which, by integration by parts at the level of local martingales, implies
\begin{equation}\label{eq:loclmartprof2}
\int_0^s  (\nabla d f_r)(W_r(\dot{k}_r u),W_r(v)) dr-df_s(W_s(v))\int_0^s \langle W_r(\dot{k}_ru),//_rdB_r\rangle
\end{equation}
is yet another local martingale. Since \eqref{eq:loclmartprof1} and \eqref{eq:loclmartprof2} are local martingales, consequently
\begin{equation}
N^{\prime}_s(k_s u,v) - df_s(W_s(v))\int_0^s \langle W_r(\dot{k}_ru),//_rdB_r\rangle - \int_0^s  (df_r)(W^{\prime}_r (\dot{k}_r u,v))dr
\end{equation}
is a local martingale, from which the claim follows by the definition of $N'_s(\cdot,v)$.
\end{proof}

\begin{theorem}\label{thm:locformtwo}
Suppose ${\Ric}_Z$ is bounded below with $f \in C^1_b(M)$. Fix $t>0$, suppose $k$ is as in Lemma \ref{lem:ibp1} with $k_0=\id_{T_x M}$ and $k_s=0$ for $s \geq \tau \wedge t$ and that
\begin{equation}
\E\left[\int_0^t |\dot{k}_s|^2 ds\right] < \infty.
\end{equation}
Then
\begin{align}
(\n dP_tf)(u,v) =\,& -\E\left[ (df)(W_t(v))\int_0^t \langle W_s(\dot{k}_s u),//_sdB_s\rangle\right]\\
&- \E\left[ (df)(W_t\int_0^t  W_s^{-1}W^{\prime}_s (\dot{k}_s u,v)ds)\right]
\end{align}
for all $u,v \in T_xM$.
\end{theorem}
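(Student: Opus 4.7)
The plan is to exploit the local martingale furnished by Lemma \ref{lem:ibp1} and compare its expectations at the two endpoints $s=0$ and $s=t$. Writing
\begin{equation}
M_s := (\nabla d f_s)(W_s(k_s u),W_s(v)) + (df_s)(W^{\prime}_s (k_s u,v)) - (df_s)(W_s(v))\int_0^s \langle W_r(\dot{k}_ru),//_rdB_r\rangle - \int_0^s (df_r)(W^{\prime}_r (\dot{k}_r u,v))\,dr,
\end{equation}
we have $M_0 = (\nabla dP_tf)(u,v)$ since $k_0 = \id_{T_xM}$ and $W^{\prime}_0=0$ with $f_0 = P_tf$. At the other endpoint, the condition $k_t = 0$ (using $k_s=0$ for $s \ge \tau \wedge t$) together with the bilinearity of $W^{\prime}_s$ in its two arguments forces the first two terms of $M_t$ to vanish, so
\begin{equation}
M_t = -(df)(W_t(v))\int_0^t \langle W_s(\dot{k}_s u),//_s dB_s\rangle - \int_0^t (df_r)(W^{\prime}_r(\dot{k}_r u,v))\,dr.
\end{equation}

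The first step I would carry out is to upgrade the local martingale $M$ to a genuine martingale. The curvature assumption $\Ric_Z \ge -C$ bounds the pathwise norm of $W_s$ by $e^{Cs/2}$, and Theorem \ref{thm:locformone} together with $f \in C^1_b(M)$ yields $\|df_s\|_\infty \le \|df\|_\infty$. The crucial role of the assumption $k_s = 0$ for $s \geq \tau \wedge t$ is to localize every appearance of $\dot{k}$ to the domain $D$, where $R$, $\nabla{\Ric}_Z^\sharp$, $d^\star R$ and $R(Z)$ are all uniformly bounded; this together with the $L^2$ bound on $\dot{k}$ then yields uniform $L^2$-integrability of $W_s(\dot k_su)$, $W^{\prime}_s(\dot k_s u,v)$ and of the stochastic integral $\int_0^s \langle W_r(\dot{k}_r u),//_r dB_r\rangle$, from which an $L^2$-bound on $\sup_{s\in[0,t]}|M_s|$ follows by Burkholder--Davis--Gundy and Cauchy--Schwarz.

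Once $M$ is a true martingale, $\E[M_0] = \E[M_t]$ gives
\begin{equation}
(\nabla dP_tf)(u,v) = -\E\Big[(df)(W_t(v))\!\int_0^t \!\!\langle W_s(\dot{k}_s u),//_s dB_s\rangle\Big] - \int_0^t \E[df_r(W^{\prime}_r(\dot{k}_r u,v))]\,dr.
\end{equation}
The final step is to rewrite the Lebesgue integral in the form stated in the theorem. For each $r\in[0,t]$, set $w_r := W_r^{-1}W^{\prime}_r(\dot{k}_r u,v) \in T_xM$, which is $\mathcal{F}_r$-measurable. Then $df_r(W^{\prime}_r(\dot{k}_r u,v)) = N_r(w_r)$, where $N_s(\cdot)$ is the martingale from Lemma \ref{lem:Nlocmart}. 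Expanding $w_r$ in a fixed orthonormal basis of $T_xM$ and using linearity of $N_s$ in its argument (together with $\mathcal{F}_r$-measurability of the coefficients), the tower property gives $\E[N_t(w_r)\mid\mathcal{F}_r] = N_r(w_r)$; taking expectations and applying Fubini together with the linearity of $W_t$ as an operator on $T_xM$ yields
\begin{equation}
\int_0^t \E[df_r(W^{\prime}_r(\dot{k}_r u,v))]\,dr = \E\Big[(df)\Big(W_t\!\int_0^t \!\!W_s^{-1}W^{\prime}_s(\dot{k}_s u,v)\,ds\Big)\Big],
\end{equation}
completing the proof.

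The main obstacle I anticipate is the verification that $M$ is a genuine martingale rather than just a local one, since the curvature operators entering $W^{\prime}_s$ need not be globally bounded; this is precisely where the localization $k_s=0$ for $s\geq\tau\wedge t$ is indispensable, and it is also what requires a careful $L^2$-estimate combining the bound on $\dot{k}$ with the pathwise bound on $\bar D$.
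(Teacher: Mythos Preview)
Your proposal is correct and follows essentially the same route as the paper: you take expectations of the local martingale from Lemma~\ref{lem:ibp1} at the endpoints, using compactness of $D$ and the vanishing of $k$ after $\tau\wedge t$ to justify the true martingale property, and then rewrite the term $\int_0^t \E[(df_r)(W'_r(\dot k_r u,v))]\,dr$ via Theorem~\ref{thm:locformone}. Your tower-property argument with $N_s$ is exactly the paper's ``apply Theorem~\ref{thm:locformone} and the Markov property'' spelled out; the only minor slip is the claim $\|df_s\|_\infty\le\|df\|_\infty$, which should read $\|df_s\|_\infty\le e^{-K(t-s)}\|df\|_\infty$ (still bounded on $[0,t]$ since $K$ is merely a lower bound).
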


\begin{proof}
Since $D$ is compact with $k$ vanishing once $X(x)$ exits $D$ it follows that the local martingale appearing in Lemma \ref{lem:ibp1} is a true martingale on $[0,t]$ and consequently
\begin{align}
(\n dP_tf)(u,v) =\,& -\E\left[ (df)(W_t(v))\int_0^t \langle W_s(\dot{k}_s u),//_sdB_s\rangle\right]\\
&- \E\left[ \int_0^t (df_s)(W^{\prime}_s (\dot{k}_s u,v))ds\right].
\end{align}
Now apply Theorem \ref{thm:locformone} and the Markov property to the second term on the right-hand side.
\end{proof}

Finally we prove a formula for the third derivative. For each $u,v,w \in T_xM$ define $W''_s(u,v,w)$ along the paths of $X(x)$ by
\begin{align}
DW''_s(u,v,w) =\,& (\n_{W_s(u)} R)(//_sdB_s,W_s(v))W_s(w)\\
& + R(//_sdB_s,W'_s(u,v))W_s(w)\\
& + R(//_sdB_s,W_s(v))W'_s(u,w)\\
& + R(//_sdB_s,W_s(u))W'_s(v,w)\\
& -\tfrac{1}{2} \n_{W_s(u)} (d^\star R - 2R(Z) + \n {\Ric}_Z^\sharp)(W_t(v),W_t(w))ds\\
& -\tfrac{1}{2} (d^\star R -2R(Z) + \n {\Ric}_Z^\sharp)(W'_s(u,v),W_s(w))ds\\
& -\tfrac{1}{2} (d^\star R -2R(Z) + \n {\Ric}_Z^\sharp)(W_s(v),W'_s(u,w))ds\\
& -\tfrac{1}{2} (d^\star R -2R(Z) + \n {\Ric}_Z^\sharp)(W_s(u),W'_s(v,w))ds\\
& -\tfrac{1}{2} {\Ric}_Z^\sharp(W''_s(u,v,w))ds\\
& + \tr R(\cdot,W_s(u))R(\cdot,W_s(v))W_s(w) ds
\end{align}
with $W''_0(u,v,w) = 0$. Recall that the processes $W_s$ and $W_s'$ are defined as in \eqref{eq:eqfordpt} and \eqref{eq:eqforw'}.

\begin{lemma}\label{lem:N''locmart}
For $u,v,w \in T_xM$ set
\begin{align}
N''_s(u,v,w) =\,& (\n \n df_s)(W_s(u),W_s(v),W_s(w))\\
& + (\n df_s)(W_s(v),W'_s(u,w))\\
& + (\n df_s)(W'_s(u,v),W_s(w))\\
& + (\n df_s)(W_s(u),W'_s(v,w))\\
& + (df_s)(W''_s(u,v,w)).
\end{align}
Then $N''_s(u,v,w)$ is a local martingale.
\end{lemma}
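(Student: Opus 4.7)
The plan is to mimic the proofs of Lemmas \ref{lem:Nlocmart} and \ref{lem:N'locmart} at the next order: apply It\^o's formula to each of the five summands in $N''_s(u,v,w)$ and verify that every bounded-variation contribution cancels, leaving only a stochastic integral against $//_s dB_s$. The first summand $(\n\n df_s)(W_s(u),W_s(v),W_s(w))$ is purely of bounded variation in the $W$-slots, so its It\^o differential produces one $dB_s$-piece from $\n_{//_sdB_s}\n\n df_s$, the three drift pieces coming from $DW_s(\cdot) = -\tfrac{1}{2}{\Ric}^\sharp_Z W_s(\cdot)\,ds$, and the time derivative $\partial_s(\n\n df_s)$; the remaining three summands $(\n df_s)(W_s,W'_s)$ and the permutations are bilinear in a $W$-slot and a $W'$-slot and therefore generate both $dB_s$-pieces and genuine cross-variations, while the last summand $(df_s)(W''_s(u,v,w))$ contributes $(\n_{//_s dB_s} df_s)(W''_s(u,v,w))$, the drift from $DW''_s$, and another cross-variation of $d(df_s)$ with the stochastic part of $DW''_s$.

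The analytic input for handling $\partial_s$ is exactly the chain of commutation and Weitzenb\"ock identities already assembled. Starting from $\partial_s f_s = -\A f_s$ and iterating, one writes
\begin{align}
\partial_s(\n\n df_s) = -(\tfrac{1}{2}\square+\n_Z)(\n\n df_s) + \tfrac{1}{2}(\n\n df_s)({\Ric}_Z^\sharp,\cdot,\cdot)+\text{correction terms},
\end{align}
where Lemma \ref{lem:weitzenbock3} commutes $\square$ past $\n\n d$ at the cost of terms involving $d^\star R$, $R$, and their traces, while Lemma \ref{lem:commutator3} (applied once, together with Lemma \ref{lem:commutator2} internally) commutes $\n_Z$ past the outer covariant derivatives at the cost of terms involving $R(Z)$ and $\n Z$. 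Combined with the drift terms from $DW_s$, $DW'_s$ and $DW''_s$, this produces precisely the combinations $\nabla {\Ric}^\sharp_Z + d^\star R - 2R(Z)$ and their covariant derivatives that appear in the definition of $DW''_s$, so those contributions cancel by construction.

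The remaining bookkeeping is of cross-variations. For each $(\n df_s)(W_s,W'_s)$ summand the $dB_s$-part of $d(\n df_s)$ contracts against the $dB_s$-part $R(//_s dB_s,W_s(\cdot))W_s(\cdot)$ of $DW'_s$, giving traces of the form $\tr(\n_\cdot \n df_s)(\cdot,R(\cdot,W_s(\cdot))W_s(\cdot))$; these are exactly matched by the drift part of $DW''_s$ together with the Weitzenb\"ock correction $2\tr(\n_\cdot \n df)(R(\cdot,X)\cdots)$ coming from $\square$. The cross-variation of $d(df_s)$ with the full stochastic part of $DW''_s$ produces terms involving $\n_{(\cdot)} R$, $R\circ W'$, and the purely quadratic $\tr R(\cdot,W_s(u))R(\cdot,W_s(v))W_s(w)$ contribution; the latter is precisely why that term sits in $DW''_s$ with the sign shown.

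The substantive obstacle is therefore not conceptual but combinatorial: one must align, sign by sign, the drifts coming from $\partial_s$, from $DW_s$, $DW'_s$, $DW''_s$, and from all the It\^o cross-variations between the three types of $dB_s$-inputs (parallel transport appearing through $\n df_s$ and $\n\n df_s$, and curvature appearing through $DW'_s$ and $DW''_s$). Once this matching is carried out, all $ds$-terms vanish and what remains is a sum of stochastic integrals of the form $(\n_{//_sdB_s}\n\n df_s)(W_s(u),W_s(v),W_s(w)) + (\n_{//_sdB_s}\n df_s)(\cdots) + (\n_{//_sdB_s} df_s)(W''_s(u,v,w))$ plus the curvature cross terms $(\n df_s)(R(//_s dB_s,W_s(u))W_s(w),\cdot)$ and permutations, which together constitute the local martingale differential of $N''_s(u,v,w)$.
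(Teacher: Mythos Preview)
Your outline is correct and follows exactly the paper's approach: apply It\^o's formula to each of the five summands, compute $\partial_s(\nabla\nabla df_s)$ via Lemmas \ref{lem:commutator3} and \ref{lem:weitzenbock3} (on top of the lower-order identities already used for $N'_s$), and match the resulting drift terms and cross-variations against the defining equations for $DW_s$, $DW'_s$ and $DW''_s$. The paper carries out this same computation explicitly---grouping the arguments as $W^3_s$, $W^2_s$, $W^1_s$, writing out all the bracket terms $[d(\nabla\nabla df),DW^3]_s$, $[d(\nabla df),DW^2]_s$, $[d(df),DW^1]_s$, and then noting that of the resulting $74$ terms $64$ cancel---so what you have left to do is precisely the combinatorial verification you flagged, after which the surviving stochastic differential agrees with the ten-term expression the paper records for $dN''_s$.
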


\begin{proof}
As in the proof of Lemma \ref{lem:N'locmart} we have
\begin{align}
\partial_s df_s = -(\tfrac{1}{2}\square + \n_Z) df_s + \tfrac{1}{2}df_s({\Ric}_Z^\sharp)
\end{align}
and 
\begin{align}
(\partial_s (\nabla d f_s))(v_1,v_2)=\,& - ((\tfrac{1}{2}\square + \n_Z) \n df_s)(v_1,v_2) + \tfrac{1}{2}(\n df_s)(v_1,{\Ric}_Z^\sharp (v_2))\\
& + \tfrac{1}{2}(df_s)((\n_{v_1}{\Ric}_Z^\sharp)(v_2))+\tfrac{1}{2}(\n df_s)({\Ric}_Z^\sharp (v_1),v_2)\\
&+ \tfrac{1}{2}(df_s)((d^\star R - 2R(Z))(v_1,v_2)) - \tr  (\n_{\cdot} df_s)(R(\cdot,v_1)v_2)
\end{align}
and therefore also
\begin{align}
&(\partial_s (\n \n df_s))(v_1,v_2,v_3) \\
=\,& - ((\tfrac{1}{2}\n_{v_1}\square + \n_{v_1} \n_Z) \n df_s)(v_2,v_3) + \tfrac{1}{2}(\n_{v_1} \n df_s)(v_2,{\Ric}_Z^\sharp (v_3))\\
&+ \tfrac{1}{2}(\n_{v_2} df_s)((\n_{v_1} {\Ric}_Z^\sharp )(v_3)) + \tfrac{1}{2}(\n_{v_1} df_s)((\n_{v_2}{\Ric}_Z^\sharp)(v_3))\\
&+ \tfrac{1}{2}( df_s)((\n_{v_1}\n {\Ric}_Z^\sharp)(v_2,v_3))+\tfrac{1}{2}(\n_{v_1}\n df_s)({\Ric}_Z^\sharp (v_2),v_3)\\
&+\tfrac{1}{2}(\n df_s)((\n_{v_1}{\Ric}_Z^\sharp) (v_2),v_3)+ \tfrac{1}{2}(\n_{v_1}df_s)((d^\star R - 2R(Z))(v_2,v_3))\\
&+\tfrac{1}{2}(df_s)(\n_{v_1} (d^\star R -2R(Z))(v_2,v_3)) - \tr  (\n_{\cdot} df_s)((\n_{v_1}R)(\cdot,v_2)v_3)\\
&- \tr  ( \n_{v_1}\n_{\cdot} df_s)(R(\cdot,v_2)v_3).
\end{align}
By the Ricci identity, the final term on the right-hand side satisfies
\begin{align}
\tr  ( \n_{v_1}\n_{\cdot} df_s)(R(\cdot,v_2)v_3) =\,& \tr  ( \n_{\cdot} \n df_s)({v_1},R(\cdot,v_2)v_3)\\
& + \tr (df_s)(R(\cdot,v_1)R(\cdot,v_2)v_3).
\end{align}
Furthermore, by the relations given by Lemmas \ref{lem:commutator3} and \ref{lem:weitzenbock3}, we have
\begin{align}
&\tfrac{1}{2}(\n_{v_1} \square \n df_s)({v_2},{v_3})\\
=\,& \tfrac{1}{2}(\square \n \n df_s)({v_1},{v_2},{v_3}) - \tfrac{1}{2}(\n \n df_s) ({\Ric}^\sharp ({v_1}),{v_2},{v_3})\\
&  -\tfrac{1}{2}(\n df_s) ( (d^\star R)({v_1},{v_2}),{v_3}) - \tfrac{1}{2}(\n df_s) ({v_2},(d^\star R)({v_1},{v_3}))\\
& -\tr (\n_\cdot \n df_s)(R({v_1},\cdot){v_2},{v_3})- \tr (\n_\cdot \n df_s)({v_2},R({v_1},\cdot){v_3})
\end{align}
and
\begin{align}
(\n_{v_1} \n_Z \n df_s)({v_2},{v_3}) =\,& (\n_{Z}\n \n df_s)({v_1},{v_2},{v_3}) + (\n df_s)(R(Z)({v_1},{v_2}),{v_3})\\
&+ (\n df_s)({v_2},R(Z)({v_1},{v_3})) + (\n \n df_s)(\n_{v_1} Z,{v_2},{v_3})
\end{align}
so that
\begin{align}
&(\partial_s (\n \n df_s))(v_1,v_2,v_3) \\
=\,& -((\tfrac{1}{2}\square + \n_Z) \n \n df_s)({v_1},{v_2},{v_3})+ \tfrac{1}{2}(\n \n df_s) ({\Ric}_Z^\sharp ({v_1}),{v_2},{v_3})\\
&+\tfrac{1}{2}(\n\n df_s)(v_1,{\Ric}_Z^\sharp (v_2),v_3)+ \tfrac{1}{2}(\n \n df_s)(v_1,v_2,{\Ric}_Z^\sharp (v_3))\\
& +\tfrac{1}{2}(\n df_s) ( (d^\star R-2R(Z))({v_1},{v_2}),{v_3})\\
& + \tfrac{1}{2}(\n df_s) ({v_2},(d^\star R-2R(Z))({v_1},{v_3}))\\
& + \tfrac{1}{2}(\n df_s)(v_1,(d^\star R - 2R(Z))(v_2,v_3))\\
& +\tr (\n_\cdot \n df_s)(R({v_1},\cdot){v_2},{v_3})+ \tr (\n_\cdot \n df_s)({v_2},R({v_1},\cdot){v_3})\\
& - \tr  ( \n_{v_1}\n_{\cdot} df_s)(R(\cdot,v_2)v_3) - \tr (df_s)(R(\cdot,v_1)R(\cdot,v_2)v_3)\\
&- \tr  (\n_{\cdot} df_s)(\n_{v_1}R(\cdot,v_2)v_3)+ \tfrac{1}{2}(\n df_s)({v_2},(\n_{v_1} {\Ric}_Z^\sharp )(v_3))\\
& + \tfrac{1}{2}(\n df_s)({v_1},(\n_{v_2}{\Ric}_Z^\sharp)(v_3))+\tfrac{1}{2}(\n df_s)((\n_{v_1}{\Ric}_Z^\sharp) (v_2),v_3)\\
&+ \tfrac{1}{2}( df_s)((\n_{v_1}\n{\Ric}_Z^\sharp)(v_2,v_3))+\tfrac{1}{2}(df_s)(\n_{v_1} (d^\star R -2R(Z))(v_2,v_3))
\end{align}
at each $x \in M$ with $v_1,v_2,v_3 \in T_xM$. Now set
\begin{align}
W^3_s(u,v,w) :=\,& W_s(u) \otimes W_s(v) \otimes W_s(w) \\
W^2_s(u,v,w) :=\,& W'_s(u,v)\otimes W_s(w) + W_s(v)\otimes W'_s(u,w) + W_s(u)\otimes W'_s(v,w)\\
W^1_s(u,v,w) :=\,& W''_s(u,v,w)
\end{align}
and suppress the variables $(u,v,w)$ so that
\begin{equation}
N''_s= (\n \n df_s)(W^3_s) + (\n df_s)(W^2_s) + (df_s)(W^1_s).
\end{equation}
Then, by It\^{o}'s formula, we find
\begin{align}
dN''_s =\,& (\n_{//_s dB_s} \n \n df_s)(W^3_s) + (\n \n df_s)(DW^3_s)\\
& + (\partial_s + \tfrac{1}{2}\square + \n_Z)(\n \n df_s)(W^3_s)ds + [d(\n \n df),DW^3]_s\\
& + (\n_{//_s dB_s}\n df_s)(W^2_s) + (\n d f_s)(DW^2_s)ds\\
& + (\partial_s + \tfrac{1}{2}\square + \n_Z)(\n df_s)(W^2_s)ds + [d(\n df),DW^2]_s\\
& + (\n_{//_s dB_s} df_s)(W^1_s) + (d f_s)(DW^1_s)ds\\
& + (\partial_s + \tfrac{1}{2}\square + \n_Z)(df_s)(W^1_s)ds + [d(df),DW^1]_s.
\end{align}
Clearly 
\begin{align}
[d(\n \n df),DW^3]_s = 0
\end{align}
while furthermore
\begin{align}
[d(\n df),DW^2]_s =\,& [\nabla_{// dB} \nabla d f,R(//dB,W(u))W(v)\otimes W(w)]_s\\
&+ [\nabla_{// dB} \nabla d f,W(v)\otimes R(//dB,W(u))W(w)]_s\\
&+ [\nabla_{// dB} \nabla d f,W(u)\otimes R(//dB,W(v))W(w)]_s\\
=\,& -\tr (\nabla_{\cdot} \nabla d f_s)(R(W_s(u),\cdot)W_s(v),W_s(w))ds \\
& - \tr (\nabla_{\cdot} \nabla d f_s)(W_s(v),R(W_s(u),\cdot)W_s(w))ds \\
& + \tr (\nabla_{\cdot} \nabla d f_s)(W_s(u),R(\cdot,W_s(v))W_s(w))ds
\end{align}
and
\begin{align}
[d(df),DW^1]_s =\,& [\nabla_{// dB} d f, (\n_{W(u)} R)(//dB,W(v))W(w)]_s\\
& + [\nabla_{// dB} \nabla d f,R(//dB,W'(u,v))W(w) ]_s\\
& + [\nabla_{// dB} \nabla d f,R(//dB,W(v))W'(u,w) ]_s\\
& + [\nabla_{// dB} \nabla d f,R(//dB,W(u))W'(v,w) ]_s\\
=\,& \tr (\nabla_{\cdot} d f_s)( (\n_{W_s(u)} R)(\cdot,W_s(v))W_s(w))ds\\
& + \tr(\nabla_{\cdot} d f)(R(\cdot,W'_s(u,v))W_s(w))ds\\
& + \tr(\nabla_{\cdot} d f)(R(\cdot,W_s(v))W'_s(u,w))ds\\
& + \tr(\nabla_{\cdot} d f)(R(\cdot,W_s(u))W'_s(v,w))ds.
\end{align}
In calculating the second formula, we used
\begin{align}
& DW^2_s(u,v,w)\\
=\,& DW'_s(u,v) \otimes W_s(w) + W'_s(u,v) \otimes DW_s(w)\\
& +DW_s(v) \otimes W'_s(u,w) + W_s(v) \otimes DW'_s(u,w)\\
& +DW_s(u) \otimes W'_s(v,w) + W_s(u) \otimes DW'_s(v,w)\\
=\,& R(//_sdB_s,W_s( u ))W_s(v) \otimes W_s(w)\\
& -\tfrac{1}{2} (d^\star R -2 R(Z) +\nabla {\Ric}^\sharp_Z) (W_s( u),W_s(v)) \otimes W_s(w)ds\\
& -\tfrac{1}{2} {\Ric}^\sharp_Z (W^{\prime}_s(u,v))ds \otimes W_s(w) -\tfrac{1}{2} W'_s(u,v) \otimes {\Ric}^\sharp_Z (W_s(w))ds\\
& + W_s(v) \otimes R(//_sdB_s,W_s( u ))W_s(w)\\
& -\tfrac{1}{2} W_s(v) \otimes (d^\star R -2 R(Z) +\nabla {\Ric}^\sharp_Z) (W_s( u),W_s(w))ds\\
& -\tfrac{1}{2} W_s(v) \otimes  {\Ric}^\sharp_Z (W^{\prime}_s(u,w))ds -\tfrac{1}{2} {\Ric}^\sharp_Z (W_s(v)) ds \otimes W'_s(u,w)\\
& + W_s(u) \otimes R(//_sdB_s,W_s(v))W_s(w)\\
& -\tfrac{1}{2} W_s(u) \otimes  (d^\star R -2 R(Z) +\nabla {\Ric}^\sharp_Z) (W_s(v),W_s(w))ds\\
& -\tfrac{1}{2} W_s(u) \otimes  {\Ric}^\sharp_Z (W^{\prime}_s(v,w))ds -\tfrac{1}{2} {\Ric}^\sharp_Z (W_s(u)) ds \otimes W'_s(v,w)
\end{align}
while the analogous formulas for $DW^1_s$ and $DW^3_s$ follow directly from the definitions. Putting all this together we see that $dN''_s$ is given by an expression involving $74$ terms, $64$ of which cancel, leaving
\begin{align}
dN''_s(u,v,w) =\,& (\n_{//_s dB_s} \n \n df_s)(W^3_s(u,v,w))\\
& + (\n_{//_s dB_s}\n df_s)(W^2_s(u,v,w))\\
& + (\n d f_s)(R(//_sdB_s,W_s( u ))W_s(v),W_s(w))\\
& + (\n d f_s)(W_s(v),R(//_sdB_s,W_s( u ))W_s(w))\\
& + (\n d f_s)(W_s(u),R(//_sdB_s,W_s(v))W_s(w))\\
& + (\n_{//_s dB_s} df_s)(W^1_s(u,v,w))\\
& + (d f_s)((\n_{W_s(u)} R)(//_sdB_s,W_s(v))W_s(w))\\
& + (d f_s)(R(//_sdB_s,W'_s(u,v))W_s(w))\\
& + (d f_s)R(//_sdB_s,W_s(v))W'_s(u,w))\\
& + (d f_s)R(//_sdB_s,W_s(u))W'_s(v,w))
\end{align}
so that $N''_s(u,v,w)$ is, in particular, a local martingale.
\end{proof}

\begin{theorem}\label{thm:thirdderform}
If the local martingale $N''_s(u,v,w)$ is a martingale with $f \in C^3_b(M)$ then
\begin{align}
(\n \n d P_t f)(u,v,w) =\,& \E[(\n \n df)(W_t(u),W_t(v),W_t(w))]\\
& + \E[(\n df)(W_t(v),W'_t(u,w))]\\
& + \E[(\n df)(W'_t(u,v),W_t(w))]\\
& + \E[(\n df)(W_t(u),W'_t(v,w))]\\
& + \E[(df)(W''_t(u,v,w))]
\end{align}
for all $u,v \in T_xM$, $x \in M$ and $t \geq 0$.
\end{theorem}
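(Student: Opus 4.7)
The plan is to follow exactly the template established for the first two derivative formulas (Theorems \ref{thm:locformone} and the analogous second-derivative statement): all of the heavy analytic lifting has already been done in Lemma \ref{lem:N''locmart}, where the process $N''_s(u,v,w)$ is shown to be a local martingale. Under the hypothesis that it is in fact a true martingale on $[0,t]$, the optional sampling identity $\E[N''_0(u,v,w)] = \E[N''_t(u,v,w)]$ applies, and the desired formula will drop out once the two endpoints are evaluated.

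First I would compute $N''_0(u,v,w)$. By construction $f_0 = P_t f$, $W_0 = \id_{T_xM}$, and the initial conditions $W'_0(u,v) = 0$ and $W''_0(u,v,w) = 0$ kill every term in the defining expression for $N''_s$ except the first. This gives
\begin{equation}
N''_0(u,v,w) = (\n \n d P_t f)(u,v,w),
\end{equation}
which is deterministic, so $\E[N''_0(u,v,w)]$ equals the left-hand side of the claimed identity.

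Next I would compute $N''_t(u,v,w)$. Since $f_t = P_0 f = f$, substituting $s=t$ directly into the definition yields precisely the five-term sum appearing on the right-hand side of the theorem. Taking expectation then gives the claimed equality term by term. This is mechanical: the five summands of $N''_s$ match exactly, in order, the five expectations in the conclusion.

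There is no real obstacle in this proof, as the genuine content is packaged in Lemma \ref{lem:N''locmart}; the only subtlety is that $N''_s$ is, a priori, only a local martingale, which is why the statement assumes outright that $N''_s$ is a martingale. In applications, one would verify this either under compactness of $M$ (as noted after the second-derivative theorem, where all curvature terms and terms in $Z$ are bounded), or else by first introducing a bounded adapted process $k$ supported on $[0,\tau\wedge t]$ with $\tau$ the exit time from a regular domain, in the spirit of Lemma \ref{lem:ibp1}, and then passing to the limit. But none of that enters the proof of the stated theorem: one simply invokes the equality of expectations at $s=0$ and $s=t$ and reads off the result.
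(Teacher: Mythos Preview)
Your proposal is correct and follows exactly the paper's own argument: the proof amounts to taking expectations at $s=0$ and $s=t$ and using $\E[N''_0(u,v,w)] = \E[N''_t(u,v,w)]$, with the endpoint evaluations coming out precisely as you describe.
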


\begin{proof}
This follows from Lemma \ref{lem:N''locmart} since if $N''_s(u,v,w)$ is a martingale then
\begin{equation}
\E[N''_0(u,v,w)] = \E[N''_t(u,v,w)]
\end{equation}
for all $t \geq 0$.
\end{proof}

To reduce the number of derivatives appropriately, we must now perform integration by parts, at the level of local martingales.

\begin{lemma}\label{lem:ibp21}
Suppose $k$ is a bounded adapted process with paths in the Cameron-Martin space $L^{1,2}([0,t];\Aut(T_{x}M))$. Suppose $k_s = 0$ for $s \geq \tau \wedge t$. Then
\begin{align}
&N''_s(k_su,v,w) - (df_s)(W_s(w))\int_0^s \<W'_r(\dot{k}_r u,v),//_rdB_r\>\\
& - (df_s)(W_s(v))\int_0^s \<W'_r(\dot{k}_ru,w),//_r dB_r\>\\
& - ((\n d f_s)(W_s(v),W_s(w)) + (df_s)(W_s'(v,w)))\int_0^s \< W_r(\dot{k}_r u),//_r dB_r\>\\
& + \int_0^s (df_r)(R(W_r(\dot{k}_ru),W_r(v))W_r(w)-W''_r(\dot{k}_r u,v,w))dr
\end{align}
is a local martingale, for each $u,v,w \in T_xM$.
\end{lemma}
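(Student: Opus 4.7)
The plan is to follow the template of the proof of Lemma \ref{lem:ibp1}: use the absolute continuity of $k$ to convert $N''_s(k_s u, v, w)$ into a local martingale modulo a $ds$-integral, and then perform integration by parts at the level of local martingales to trade each top-order derivative of $f$ appearing in the integrand for a boundary term of the form (local martingale)$\times$(stochastic integral).

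Since $k_s$ has absolutely continuous paths and $N''_s(\cdot, v, w)$ is linear in its first slot, It\^{o}'s formula together with Lemma \ref{lem:N''locmart} yields that
\begin{equation}
N''_s(k_s u, v, w) - \int_0^s N''_r(\dot{k}_r u, v, w) \, dr
\end{equation}
is a local martingale. Expanding $N''_r(\dot{k}_r u, v, w)$ via its definition produces five $dr$-integrand terms, three of which carry a top-order derivative of $f$ that we wish to reduce by one: the $(\n \n df_r)(W_r(\dot{k}_r u), W_r(v), W_r(w))$ term and the two $(\n df_r)$ terms involving one $W$ and one $W'$ entry.

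For each of those three terms I would identify a local martingale and a stochastic integral whose quadratic covariation reproduces the term. To eliminate $(\n \n df_r)(W_r(\dot{k}_r u), W_r(v), W_r(w))$, pair the local martingale $N'_s(v, w) = (\n df_s)(W_s(v), W_s(w)) + (df_s)(W'_s(v, w))$ from Lemma \ref{lem:N'locmart} with the stochastic integral $\int_0^s \langle W_r(\dot{k}_r u), //_r dB_r \rangle$; using the explicit three-component stochastic differential of $N'_s(v, w)$ displayed in the proof of Lemma \ref{lem:N'locmart}, the resulting covariation splits into three pieces, yielding the wanted $(\n \n df)$ term together with an $(\n df_r)(W_r(\dot{k}_r u), W'_r(v, w))$ term and the curvature term $(df_r)(R(W_r(\dot{k}_r u), W_r(v)) W_r(w))$ visible in the statement. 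To eliminate the remaining top-order terms $(\n df_r)(W_r(v), W'_r(\dot{k}_r u, w))$ and $(\n df_r)(W'_r(\dot{k}_r u, v), W_r(w))$, pair the scalar local martingales $df_s(W_s(v))$ and $df_s(W_s(w))$ from Lemma \ref{lem:Nlocmart} with $\int_0^s \langle W'_r(\dot{k}_r u, w), //_r dB_r \rangle$ and $\int_0^s \langle W'_r(\dot{k}_r u, v), //_r dB_r \rangle$ respectively, using symmetry of $\n df$ to align the two slots.

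Substituting these three integration-by-parts identities into the expression from the first step, the two $(\n df_r)(W_r(\dot{k}_r u), W'_r(v, w))$ contributions cancel---one from the original expansion of $N''_r(\dot{k}_r u, v, w)$ and the other with opposite sign coming from the $N'_s(v, w)$ covariation---and the surviving terms assemble into precisely the local martingale displayed in the statement. The main obstacle will be the careful evaluation of the three-component covariation $d[N'_\cdot(v, w), \int_0^\cdot \langle W_r(\dot{k}_r u), //_r dB_r \rangle]_s$ and the bookkeeping required to verify the cancellations; the only analytic input beyond Lemmas \ref{lem:Nlocmart} and \ref{lem:N'locmart} is the orthonormal-basis identity $\sum_i \langle a, //_r e_i \rangle //_r e_i = a$, used to convert Brownian quadratic covariations into tensor contractions against $W_r(\dot{k}_r u)$ and $W'_r(\dot{k}_r u, \cdot)$.
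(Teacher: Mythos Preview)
Your proposal is correct and follows essentially the same approach as the paper: start from $N''_s(k_su,v,w)-\int_0^s N''_r(\dot{k}_ru,v,w)\,dr$, use the explicit differential of $N'_s(v,w)$ paired with $\int_0^s\langle W_r(\dot{k}_ru),//_rdB_r\rangle$ to handle the $(\nabla\nabla df)$ term (producing the curvature term and the cancelling $(\nabla df_r)(W_r(\dot{k}_ru),W'_r(v,w))$ contribution), and then use $N_s(v)$, $N_s(w)$ paired with the $W'$ stochastic integrals and the symmetry of the Hessian to handle the remaining two $(\nabla df)$ terms. The bookkeeping you outline, including the cancellation, matches the paper's argument exactly.
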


\begin{proof}
Since according to Lemma \ref{lem:N''locmart} $N''_s(u,v,w)$ is a local martingale, it follows that
\begin{equation}
N''_s(k_su,v,w) - \int_0^s N''_r(\dot{k}_r u,v,w) dr
\end{equation}
is a local martingale. We saw in the proof of Lemma \ref{lem:N'locmart} that
\begin{align}
dN'_s(v,w) =\,& (\nabla_{//_s dB_s} \nabla d f_s)(W_s(v),W_s(w)) + (\nabla_{//_sdB_s} df_s)(W^{\prime}_s(v,w))\\
& + (df_s)(R(//_s dB_s,W_s(v))W_s(w))
\end{align}
and so, by integration by parts, it follows that
\begin{align}
&\int_0^s (\n \n df_r)(W_r(\dot{k}_r u),W_r(v),W_r(w))dr\\
& - ((\n d f_s)(W_s(v),W_s(w)) + (df_s)(W_s'(v,w)))\int_0^s \< W_r(\dot{k}_r u),//_r dB_s\>\\
& + \int_0^s (\n df_r)(W_r(\dot{k}_s u), W'_r(v,w))dr + \int_0^s (df_r)(R(W_r(\dot{k}_ru),W_r(v))W_r(w))dr
\end{align}
is a local martingale and therefore
\begin{align}
&N''_s(k_tu,v,w) - \int_0^s (\n df_r)(W_r(v),W_r'(\dot{k}_r u,w))ds\\
& - \int_0^s (\n df_r)(W_r'(\dot{k}_r u,v),W_r(w))ds\\
& - ((\n d f_s)(W_s(v),W_s(w)) + (df_s)(W_s'(v,w)))\int_0^s \< W_r(\dot{k}_r u),//_r dB_s\>\\
& + \int_0^s (df_r)(R(W_r(\dot{k}_ru),W_r(v))W_r(w))dr\\
& - \int_0^s (df_r)(W''_r(\dot{k}_r u,v,w))dr
\end{align}
is a local martingale. Furthermore, by Lemma \ref{lem:Nlocmart} and integration by parts, we see that
\begin{equation}
\int_0^s (\n df_r)(\n df_r)(W_r(v),W_r'(\dot{k}_r u,w))ds - (df_s)(W_s(v))\int_0^s \<W'_r(\dot{k}_ru,w),//_r dB_r\>
\end{equation}
and similarly
\begin{equation}
\int_0^s (\n df_r)(W'_r(\dot{k}_ru,v),W_r(w))ds - (df_s)(W_s(w))\int_0^s \<W'_r(\dot{k}_r u,v),//_rdB_r\> 
\end{equation}
are local martingales. For this we used the fact that the Hessian is symmetric. Putting all this together completes the proof of the lemma.
\end{proof}

\begin{theorem}\label{thm:secondderhess}
If the local martingale appearing in Lemma \ref{lem:ibp21} is a martingale with $f \in C^2_b(M)$ then
\begin{align}
& (\n \n d P_t f)(u,v,w)\\
=\,& -\E\left[(df)(W_t(w))\int_0^t \<W'_s(\dot{k}_s u,v),//_sdB_s\>\right]\\
& - \E\left[(df)(W_t(v))\int_0^t \<W'_s(\dot{k}_su,w),//_s dB_s\>\right]\\
& - \E\left[((\n d f)(W_t(v),W_t(w)) + (df)(W_t'(v,w)))\int_0^t \< W_s(\dot{k}_s u),//_s dB_s\>\right]\\
&+ \E\left[(df)(W_t\int_0^t W_s^{-1}(R(W_s(\dot{k}_su),W_s(v))W_s(w)-W''_s(\dot{k}_s u,v,w))ds)\right]
\end{align}
for all $u,v \in T_xM$, $x \in M$ and $t \geq 0$.
\end{theorem}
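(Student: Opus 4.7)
\textbf{Proof plan for Theorem \ref{thm:secondderhess}.} The plan is to mimic the derivation of Theorem \ref{thm:locformtwo} from Lemma \ref{lem:ibp1}, but applied instead to the local martingale $L_s$ provided by Lemma \ref{lem:ibp21}. Under the martingale hypothesis of the theorem, the process $L_s$ is a true martingale on $[0,t]$, so $\E[L_0] = \E[L_t]$, and the formula will follow by identifying both sides.

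First I would evaluate $L_0$. At $s=0$ we have $k_0 = \id_{T_xM}$, all three stochastic integrals $\int_0^0 \cdots$ and the $dr$-integral vanish, and moreover $W_0 = \id$, $W'_0 = 0$, $W''_0 = 0$. Hence only the first term of $N''_0(k_0 u, v, w)$ survives, giving
\begin{equation}
L_0 = (\n\n df_0)(u,v,w) = (\n\n dP_tf)(u,v,w).
\end{equation}
Next I would evaluate $L_t$. Since $k_s = 0$ for $s \geq \tau \wedge t$, we have $k_t u = 0$. By linearity of the ODE \eqref{eq:eqfordpt} in its initial condition we get $W_s(0) = 0$; the defining integrals for $W'_s$ and $W''_s$ then show $W'_s(0,\cdot) = W'_s(\cdot,0) = 0$ and $W''_s(0,v,w) = 0$. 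Consequently every term in $N''_t(k_t u, v, w) = N''_t(0,v,w)$ vanishes. Using $f_t = P_0 f = f$, the identity $\E[L_t] = \E[L_0]$ becomes exactly the stated formula, \emph{except} that the last term still appears as an expectation of a Lebesgue-in-time integral of $(df_r)(\cdots)$ rather than in its final form.

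The remaining step is therefore to convert
\begin{equation}
\E\!\left[\int_0^t (df_r)\bigl(R(W_r(\dot k_r u),W_r(v))W_r(w) - W''_r(\dot k_r u,v,w)\bigr)\,dr\right]
\end{equation}
into
\begin{equation}
\E\!\left[(df)\Bigl(W_t \int_0^t W_s^{-1}\bigl(R(W_s(\dot k_s u),W_s(v))W_s(w) - W''_s(\dot k_s u,v,w)\bigr)ds\Bigr)\right].
\end{equation}
For this I would use the Markov property at time $r$ together with Theorem \ref{thm:locformone}: conditionally on $\mathcal F_r$, the random vector $\phi_r := R(W_r(\dot k_r u),W_r(v))W_r(w) - W''_r(\dot k_r u,v,w) \in T_{X_r}M$ is deterministic, so $\E[(df_r)(\phi_r)\mid \mathcal F_r] = (dP_{t-r}f)_{X_r}(\phi_r) = \E[df(\tilde W_{r,t}\phi_r)\mid \mathcal F_r]$, where $\tilde W_{r,t}$ is the parallel damped transport along the $X$-trajectory from time $r$ to $t$. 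By linearity this equals $W_t W_r^{-1}\phi_r$, and swapping the order of integration via Fubini gives the claimed expression.

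The main obstacle is the last step: one has to check that the local martingale property of the process constructed in Lemma \ref{lem:ibp21} is preserved under the standing hypotheses, so that $\E[L_0] = \E[L_t]$ is actually justified, and then that the Markov/Fubini manipulation can be carried out (which requires enough integrability of $\dot k$, $W$, $W'$, $W''$ to move the conditional expectation inside the $dr$-integral). Everything else is bookkeeping based on the vanishing of $W_s,W'_s,W''_s$ on zero-inputs and on the boundary values of $k$.
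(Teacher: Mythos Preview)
Your proposal is correct and follows the same approach as the paper: evaluate the local martingale from Lemma~\ref{lem:ibp21} at $s=0$ and $s=t$, equate expectations, and then rewrite the remaining $dr$-integral using Theorem~\ref{thm:locformone} and the Markov property. The paper's proof is a single sentence (``by taking expectations and evaluating at times $s=0$ and $s=t$'') and does not spell out the Markov-property conversion of the last term, but this is exactly the step carried out explicitly in the analogous Theorems~\ref{thm:locformtwo} and~\ref{thm:locformthree}, so your more detailed treatment is in line with the paper's method.
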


\begin{proof}
This follows from Lemma \ref{lem:ibp21} by taking expectations and evaluating at times $s=0$ and $s=t$.
\end{proof}

We now perform a second integration by parts, which again we do at the level of local martingales. For this we suppose $D_1$ and $D_2$ are regular domains with $x \in D_1$ and $D_1$ compactly contained in $D_2$. We denote by $\tau_1$ and $\tau_2$ the first exit times of $X(x)$ from $D_1$ and $D_2$, respectively.

\begin{lemma}\label{lem:ibp2}
Suppose $k,l$ are bounded adapted processes with paths in the Cameron-Martin space $L^{1,2}([0,t];\Aut(T_{x}M))$. Fix $0<t_1<t$. Suppose $k_s = 0$ for $s \geq \tau_1 \wedge t_1$ with $l_s = \id_{T_xM}$ for $0 \leq s \leq \tau_1 \wedge t_1$ and $l_s = 0$ for $s \geq \tau_2 \wedge t$. Then
\begin{align}
&N''_s(k_su,v,w) - (df_s)(W_s(w))\int_0^s \<W'_r(\dot{k}_r u,v),//_rdB_r\>\\
& - (df_s)(W_s(v))\int_0^s \<W'_r(\dot{k}_ru,w),//_r dB_r\>\\
& - ((\n d f_s)(W_s(l_s v),W_s(w)) + (df_s)(W_s'(l_s v,w)))\int_0^s \< W_r(\dot{k}_r u),//_r dB_s\>\\
& + \int_0^s (df_r)(R(W_r(\dot{k}_ru),W_r(v))W_r(w)-W''_r(\dot{k}_r u,v,w))dr\\
& + (df_s)(W_s(w))\int_0^s \langle W_r(\dot{l}_rv),//_rdB_r\rangle\int_0^s \< W_r(\dot{k}_r u),//_rdB_r\>\\
& + \int_0^s  (df_r)(W^{\prime}_r (\dot{l}_r v,w))dr\int_0^s \< W_r(\dot{k}_r u),//_rdB_r\>
\end{align}
is a local martingale, for each $u,v,w \in T_xM$.
\end{lemma}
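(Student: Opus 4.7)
The plan is to combine Lemma~\ref{lem:ibp21} with a further application of Lemma~\ref{lem:ibp1}, exploiting the essentially disjoint supports of $\dot{k}$ and $\dot{l}$ built into the hypotheses. Write $M_s := \int_0^s \langle W_r(\dot{k}_r u), //_r dB_r\rangle$ and $N'_s(v,w) := (\nabla df_s)(W_s(v),W_s(w)) + (df_s)(W'_s(v,w))$, which is a local martingale by Lemma~\ref{lem:N'locmart}. First, Lemma~\ref{lem:ibp21} provides a local martingale $\mathcal{E}^{\mathrm{old}}_s$, which is exactly the expression in the statement of Lemma~\ref{lem:ibp2} with the two $\dot{l}$-correction integrals deleted and $l_s v$ replaced by $v$ in the $N'$-term.

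Next, I would apply Lemma~\ref{lem:ibp1} with the substitutions $(k,u,v) \mapsto (l,v,w)$ to produce a second local martingale
\begin{equation*}
\tilde{N}'_s := N'_s(l_s v, w) - (df_s)(W_s(w)) \int_0^s \langle W_r(\dot{l}_r v), //_r dB_r\rangle - \int_0^s (df_r)(W'_r(\dot{l}_r v, w)) dr.
\end{equation*}
Direct algebra then shows that the candidate expression $\mathcal{E}^{\mathrm{claim}}_s$ of Lemma~\ref{lem:ibp2} satisfies
\begin{equation*}
\mathcal{E}^{\mathrm{claim}}_s - \mathcal{E}^{\mathrm{old}}_s = \bigl(N'_s(v,w) - \tilde{N}'_s\bigr)\, M_s,
\end{equation*}
so it suffices to show that the product $(N'_s(v,w) - \tilde{N}'_s)\, M_s$ is itself a local martingale.

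Both factors $N'_s(v,w) - \tilde{N}'_s$ and $M_s$ are local martingales, so by the integration by parts identity their product is a local martingale precisely when the quadratic covariation $[N'(v,w) - \tilde{N}', M]$ vanishes. This is where the stopping conditions are essential. On $[0, \tau_1 \wedge t_1]$ we have $l_s = \mathrm{id}$, hence $\dot{l}_s = 0$ and $N'_s(l_s v, w) = N'_s(v, w)$, and the correction terms defining $\tilde{N}'_s$ also vanish there; for $s > \tau_1 \wedge t_1$ we have $\dot{k}_s = 0$, so the integrands that produce the covariation brackets all vanish. Careful case analysis along these lines yields $[N'(v,w) - \tilde{N}', M]_s \equiv 0$.

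The main obstacle lies in this last step: one must decompose $[N'(v,w) - \tilde{N}', M]$ into its constituent covariations with the local martingales $N'(l_\cdot v, w)$, $(df_s)(W_s(w))$ and the relevant It\^o integrals, compute each one using the explicit martingale differentials of $N'$, $N$ and the defining stochastic integrals, and check piece-by-piece that every contribution either coincides with the corresponding summand in $[N'(v,w), M]$ or vanishes outright via the disjointness of the supports of $\dot{k}$ and $\dot{l}$. Once this covariation bookkeeping is complete, the identity $\mathcal{E}^{\mathrm{claim}}_s = \mathcal{E}^{\mathrm{old}}_s + (\text{local martingale})$ immediately yields the lemma.
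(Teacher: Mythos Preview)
Your proposal is correct and matches the paper's argument: the paper sets $O_s^{(1)} := -(N'_s(v,w) - \tilde{N}'_s)$ (written there as the Lemma~\ref{lem:ibp1} local martingale with $(l_s - \id)v$ in the first slot) and $O_s^{(2)} := M_s$, observes that $O_s^{(1)} \equiv 0$ on $[0,\tau_1\wedge t_1]$ while $O_s^{(2)}$ is constant on $[\tau_1\wedge t_1,\tau_2\wedge t]$, and subtracts the product $O_s^{(1)}O_s^{(2)}$ from $\mathcal{E}^{\mathrm{old}}_s$. The detailed covariation bookkeeping you anticipate in your final paragraph is unnecessary---your own observation in the preceding paragraph (one factor identically zero on the support of the other's increments) already gives $[N'(v,w)-\tilde{N}',M]\equiv 0$ directly.
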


\begin{proof}
By Lemma \ref{lem:ibp1}, it follows that
\begin{align}
O_s^{(1)}:=\,& (\nabla d f_s)(W_s((l_s-1) v),W_s(w)) + (df_s)(W^{\prime}_s ((l_s-1) v,w)) \\
&- (df_s)(W_s(w))\int_0^s \langle W_r(\dot{l}_rv),//_rdB_r\rangle - \int_0^s  (df_r)(W^{\prime}_r (\dot{l}_r v,w))dr
\end{align}
is a local martingale and so is
\begin{equation}
O_s^{(2)}:= \int_0^s \< W_r(\dot{k}_r u),//_rdB_r\>
\end{equation}
and therefore so is the product $O_s^{(1)}O_s^{(2)}$ since $O_s^{(1)} = 0$ on $[0,\tau_1 \wedge t_1]$ with $O_s^{(2)}$ constant on $[\tau_1 \wedge t_1, \tau_2 \wedge t]$. Subtracting the local martingale $O_s^{(1)}O_s^{(2)}$ from the local martingale appearing in Lemma \ref{lem:ibp21} therefore completes the proof of the lemma.
\end{proof}

\begin{theorem}\label{thm:locformthree}
Suppose ${\Ric}_Z$ is bounded below with $f \in C^1_b(M)$. Fix $t>0$, suppose $k,l$ are as in Lemma \ref{lem:ibp2} with
\begin{equation}
\E\left[\int_0^t |\dot{k}_s|^2 ds\right]< \infty,\quad \E\left[\int_0^t |\dot{k}_s|^2 ds\right] < \infty.
\end{equation}
Then
\begin{align}
&(\n \n dP_tf)(u,v,w)\\
=\,& -\E\left[ (df)(W_t(w))\int_0^t \<W'_s(\dot{k}_s u,v),//_sdB_s\>\right]\\
& -\E\left[ (df)(W_t(v))\int_0^t \<W'_s(\dot{k}_s u,w),//_sdB_s\>\right]\\
& + \E\left[(df)(W_t(w))\int_0^t \langle W_s(\dot{l}_sv),//_sdB_s\rangle\int_0^t \< W_s(\dot{k}_s u),//_sdB_s\>\right]\\
& + \E\left[(df)(W_t\int_0^t W_s^{-1}(R(W_s(\dot{k}_su),W_s(v))W_s(w)-W''_s(\dot{k}_s u,v,w))ds)\right]\\
& + \E\left[(df)(W_t\int_0^t  W_s^{-1}(W^{\prime}_s (\dot{l}_s v,w))ds\int_0^t \< W_s(\dot{k}_s u),//_sdB_s\>)\right]
\end{align}
for all $u,v,w \in T_xM$.
\end{theorem}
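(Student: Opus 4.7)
The plan is to take the local martingale $M_s$ from Lemma~\ref{lem:ibp2}, promote it to a true martingale on $[0,t]$, equate $\E[M_0] = \E[M_t]$, and then rewrite the time integrals on the right-hand side using Theorem~\ref{thm:locformone} together with the Markov property and the flow relation $W^{s,t} = W_t W_s^{-1}$ for damped parallel transport.

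First I would verify that $M_s$ is genuinely a martingale. Because $k$ and $\dot k$ vanish outside the regular domain $D_1$, and $l$ and $\dot l$ vanish outside $D_2$, every curvature-dependent quantity ($R$, $\n R$, $d^\star R$, $\n {\Ric}^\sharp_Z$) enters only weighted by $\dot k_s$ or $\dot l_s$ and is therefore evaluated along a path confined to the compact set $\overline{D_2}$, where these tensors are bounded. Combined with $\Ric_Z$ bounded below, this controls $|W_s|$ and, via Gronwall, $|W'_s|, |W''_s|$ on the relevant intervals; Theorem~\ref{thm:exponentialconv} gives $\|df_s\|_\infty \leq e^{-K(t-s)}\|\n f\|_\infty$; and the $L^2$-hypotheses on $\dot k, \dot l$ tame the It\^o integrals. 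Standard localisation arguments then upgrade each summand of $M_s$ from a local to a true martingale.

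Next, equate $\E[M_0] = \E[M_t]$. At $s=0$ we have $k_0 = \id$, $W_0 = \id$, $W'_0 = 0$, $W''_0 = 0$, and every stochastic integral is zero, so $M_0 = N''_0(u,v,w) = (\n\n dP_tf)(u,v,w)$. At $s=t$ the conditions $k_t = 0$ and $l_t = 0$ together with the multilinearity of $N''_t$, $W_t$ and $W'_t$ in their vector arguments kill both $N''_t(k_t u, v, w)$ and the bracket $((\n df_t)(W_t(l_t v), W_t(w)) + (df_t)(W'_t(l_t v, w)))\int_0^t \<W_s(\dot k_s u),//_s dB_s\>$; since $f_t = f$, the remaining terms in $M_t$ match, after taking expectation, the first three expectations in the theorem together with two $ds$-integrals to be treated below.

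The fourth term in the theorem is produced by applying Theorem~\ref{thm:locformone} and the Markov property to $Y_s := R(W_s(\dot k_s u), W_s(v))W_s(w) - W''_s(\dot k_s u, v, w)$, which is $\mathcal{F}_s$-measurable: one has $(df_s)(Y_s) = (dP_{t-s}f)(Y_s) = \E[(df)(W_t W_s^{-1} Y_s)\,|\,\mathcal{F}_s]$, and Fubini yields
\begin{equation}
\E\left[\int_0^t (df_s)(Y_s)\,ds\right] = \E\left[(df)\left(W_t \int_0^t W_s^{-1} Y_s\,ds\right)\right].
\end{equation}
The main obstacle, and the subtlest step, is the mixed term $\E[\int_0^t (df_s)(W'_s(\dot l_s v, w))\,ds \cdot Z_0]$ with $Z_0 := \int_0^t \<W_r(\dot k_r u),//_r dB_r\>$, since $Z_0$ is not a priori $\mathcal{F}_s$-measurable and one cannot naively condition and apply Theorem~\ref{thm:locformone}. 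The resolution is a disjoint-support observation built into Lemma~\ref{lem:ibp2}: since $k$ is constant for $s \geq \tau_1 \wedge t_1$ and $l$ is constant for $s \leq \tau_1 \wedge t_1$, the supports of $\dot k$ and $\dot l$ are essentially disjoint, with $\dot k$ supported in $[0,\tau_1\wedge t_1]$ and $\dot l$ in $[\tau_1\wedge t_1, \tau_2\wedge t]$. Hence $Z_0 = \int_0^{\tau_1\wedge t_1}\<W_r(\dot k_r u), //_r dB_r\>$ is already $\mathcal{F}_{\tau_1\wedge t_1}$-measurable, and therefore $\mathcal{F}_s$-measurable whenever $\dot l_s \neq 0$. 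We may then pull $Z_0$ inside the conditional expectation, apply Theorem~\ref{thm:locformone} once more to $W'_s(\dot l_s v, w)$, and invoke Fubini to obtain the final (fifth) term and complete the formula.
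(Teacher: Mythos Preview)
Your proposal is correct and follows essentially the same route as the paper's proof: upgrade the local martingale of Lemma~\ref{lem:ibp2} to a true martingale using compactness of the domains, equate expectations at $s=0$ and $s=t$, and then apply Theorem~\ref{thm:locformone} with the Markov property to rewrite the two $ds$-integrals. The paper's own proof is much terser and simply says ``apply Theorem~\ref{thm:locformone} and the Markov property to the final two terms''; your disjoint-support observation for $\dot k$ and $\dot l$ is exactly the measurability fact that makes this work for the mixed term, and it is built into the hypotheses of Lemma~\ref{lem:ibp2} but left implicit in the paper.
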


\begin{proof}
Since the domains are compact it follows that the local martingale appearing in Lemma \ref{lem:ibp2} a true martingale on $[0,t]$ and consequently
\begin{align}
&(\n \n dP_tf)(u,v,w)\\
=\,& -\E\left[ (df)(W_t(w))\int_0^t \<W'_s(\dot{k}_s u,v),//_sdB_s\>\right]\\
& -\E\left[ (df)(W_t(v))\int_0^t \<W'_s(\dot{k}_s u,w),//_sdB_s\>\right]\\
& + \E\left[(df_t)(W_t(w))\int_0^t \langle W_s(\dot{l}_sv),//_sdB_s\rangle\int_0^t \< W_s(\dot{k}_s u),//_sdB_s\>\right]\\
& + \E\left[\int_0^t (df_s)(R(W_s(\dot{k}_su),W_s(v))W_s(w)-W''_s(\dot{k}_s u,v,w))ds\right]\\
& + \E\left[\int_0^t  (df_s)(W^{\prime}_s (\dot{l}_s v,w))ds\int_0^t \< W_s(\dot{k}_s u),//_sdB_s\>\right]
\end{align}
Now apply Theorem \ref{thm:locformone} and the Markov property to the final two terms on the right-hand side.
\end{proof}

In the next section, we will use Theorems \ref{thm:locformone}, \ref{thm:locformtwo} and \ref{thm:locformthree} to deduce appropriate estimates on the derivatives of $P_tf$.

\section{Derivative estimates}\label{sec:derests}

\begin{theorem}\label{thm:estone}
Suppose $K \in \R$ with $\Ric_Z \geq 2K$ and $f \in C^1_b(M)$. Then
\begin{equation}
\|\n P_tf\|_\infty \leq e^{-Kt} \|\n f\|_\infty
\end{equation}
for all $t \geq 0$.
\end{theorem}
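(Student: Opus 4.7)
The plan is to combine the Bismut-type formula from Theorem \ref{thm:locformone} with a pointwise bound on the damped parallel transport process $W_s$. Since $\Ric_Z \geq 2K$ implies in particular that $\Ric_Z$ is bounded below, Theorem \ref{thm:locformone} applies and gives the representation
\begin{equation}
(dP_tf)_x(u) = \E[(df)(W_t(u))]
\end{equation}
for every $u \in T_xM$. All that remains is a deterministic (pathwise) estimate of $|W_t(u)|$ in terms of $|u|$.

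First, I would differentiate the squared norm $|W_s(u)|^2$ along the paths of $X(x)$. Because $W_s$ solves the covariant ODE \eqref{eq:eqfordpt}, which is a pathwise equation with no Brownian increment, we have the deterministic identity
\begin{equation}
\frac{d}{ds}|W_s(u)|^2 = 2\langle DW_s(u), W_s(u)\rangle = -\langle {\Ric}_Z^\sharp W_s(u),\, W_s(u)\rangle.
\end{equation}
The assumption $\Ric_Z \geq 2K$ means exactly that the bilinear form on the right is bounded above by $-2K|W_s(u)|^2$, so Gronwall's inequality yields
\begin{equation}
|W_s(u)| \leq e^{-Ks}|u|
\end{equation}
almost surely, for every $s \geq 0$.

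Plugging this pathwise bound into the probabilistic formula gives
\begin{equation}
|(dP_tf)_x(u)| \leq \E\bigl[|(df)(W_t(u))|\bigr] \leq \E\bigl[|\nabla f|(X_t(x))\,|W_t(u)|\bigr] \leq \|\nabla f\|_\infty \, e^{-Kt}\,|u|.
\end{equation}
Taking the supremum over unit vectors $u \in T_xM$ and then over $x \in M$ yields $\|\nabla P_tf\|_\infty \leq e^{-Kt}\|\nabla f\|_\infty$.

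No step here is a serious obstacle; the only thing to check carefully is that the derivative formula from Theorem \ref{thm:locformone} actually applies, which requires $\Ric_Z$ bounded below (ensuring nonexplosion and that $N_s(u)$ is a true martingale on $[0,t]$), and this is immediate from $\Ric_Z \geq 2K$. The mildly delicate point is the sign convention in \eqref{eq:eqfordpt}: the factor $-\tfrac{1}{2}{\Ric}_Z^\sharp$ there is calibrated precisely so that the pathwise bound produces the exponent $-Kt$ (and not $-Kt/2$) when paired with the hypothesis $\Ric_Z \geq 2K$, which is why the constant in the statement comes out as $e^{-Kt}$.
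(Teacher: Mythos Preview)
Your proposal is correct and follows exactly the same approach as the paper: apply the Bismut-type formula of Theorem~\ref{thm:locformone} and use Gronwall's inequality on the covariant ODE~\eqref{eq:eqfordpt} to obtain the pathwise bound $|W_t|\leq e^{-Kt}$. The paper's own proof is even terser, but the logic is identical.
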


\begin{proof}
This estimate, contained in Theorem \ref{thm:exponentialconv}, follows directly from Theorem \ref{thm:locformone}. In particular, if ${\Ric}_Z \geq 2K$ then Gronwall's inequality implies $|W_t| \leq e^{-K t}$ for all $t \geq 0$.
\end{proof}

\begin{theorem}\label{thm:esttwo}
Suppose $K \in \R$ with $\Ric_Z \geq 2K$ and $f \in C^1_b(M)$. Suppose
\begin{equation}
\|R\|_\infty < \infty ,\quad \|\nabla {\Ric}^\sharp_Z + d^\star R -2R(Z)\|_\infty < \infty.
\end{equation}
Then there exists a positive constant $C_1$ such that
\begin{equation}
\|\n d P_tf\|_\infty \leq  \frac{C_1 e^{-Kt}}{\sqrt{1\wedge t}} \|\n f\|_\infty 
\end{equation}
for all $t > 0$.
\end{theorem}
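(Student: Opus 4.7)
The plan is to apply the integration-by-parts representation in Theorem~\ref{thm:locformtwo} to $(\n d P_t f)(u,v)$ with the scalar Cameron--Martin path
\[
k_s := \bigl(1 - \tfrac{s}{t\wedge 1}\bigr)_+ \id_{T_xM}, \qquad \dot{k}_s = -\tfrac{1}{t\wedge 1}\,\id_{T_xM}\,\mathbf{1}_{[0,t\wedge 1]}(s),
\]
which satisfies $k_0=\id$, vanishes for $s\ge t\wedge 1$, and has $\E\int_0^t|\dot{k}_s|^2\,ds = 1/(t\wedge 1)<\infty$. Since Theorem~\ref{thm:locformtwo} is formulated on a regular domain, I would first exhaust $M$ by a sequence of regular domains and pass to the limit; non-explosion (which holds under $\Ric_Z\ge 2K$) together with the uniform curvature bounds provides the integrability needed to do so.

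The hypothesis $\Ric_Z\ge 2K$ combined with Gronwall's inequality applied to the defining covariant ODE \eqref{eq:eqfordpt} yields $|W_s|_{\mathrm{op}}\le e^{-Ks}$ and, by the same argument run from time $s$, $|W_tW_s^{-1}|_{\mathrm{op}}\le e^{-K(t-s)}$. For the first expectation produced by Theorem~\ref{thm:locformtwo}, I would use $\|df\|_\infty\le\|\n f\|_\infty$ and It\^o's isometry to write
\[
\Bigl|\E\bigl[(df)(W_tv)\,M_t\bigr]\Bigr| \le \|\n f\|_\infty e^{-Kt}|v|\,\|M_t\|_2, \quad \|M_t\|_2^2 \le \frac{|u|^2}{(t\wedge 1)^2}\int_0^{t\wedge 1}\!e^{-2Ks}\,ds \le \frac{C|u|^2}{t\wedge 1},
\]
where $M_t:=\int_0^t\langle W_s\dot{k}_su, //_s dB_s\rangle$; this already produces the desired factor $e^{-Kt}/\sqrt{t\wedge 1}$.

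For the second expectation I would exploit the linearity of $W'_s$ in its first argument, namely $W'_s(\dot{k}_su,v)=-W'_s(u,v)/(t\wedge 1)$, and combine $|W_tW_s^{-1}|_{\mathrm{op}}\le e^{-K(t-s)}$ with an $L^2$ estimate of the form
\[
\sqrt{\E|W'_s(u,v)|^2} \le C\,e^{-Ks}(1+s)\,|u||v|,
\]
derived from the SDE \eqref{eq:eqforw'} by applying It\^o's formula to $|W'_s|^2$, bounding the quadratic variation using $\|R\|_\infty$, the drift using $\|\n\Ric^\sharp_Z+d^\star R-2R(Z)\|_\infty$, and absorbing the dissipative term $-\tfrac12\Ric^\sharp_Z W'_s$ via $\Ric_Z\ge 2K$ in a Gronwall argument. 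This leads to a second-term bound of order $\|\n f\|_\infty |u||v|e^{-Kt}$, which is dominated by (or comparable to) the first-term estimate; taking the supremum over unit $u,v$ then yields the theorem.

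The main obstacle is the $L^2$ estimate on $W'_s$: the unsigned curvature contributions from the martingale and drift parts of \eqref{eq:eqforw'} must be absorbed correctly against the dissipation coming from the Bakry--Emery lower bound so that the resulting Gronwall inequality closes with the prefactor $e^{-Ks}$ times only a polynomial in $s$. The localization issue in extending the formula of Theorem~\ref{thm:locformtwo} from a regular domain to all of $M$ is secondary and handled by a standard exhaustion argument under the stated curvature hypotheses.
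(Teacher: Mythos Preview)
Your proposal is correct and follows essentially the same approach as the paper: exhaust $M$ by regular domains, apply Theorem~\ref{thm:locformtwo} with the linear Cameron--Martin path $k_s=\bigl((1\wedge t)-s\bigr)_+/(1\wedge t)$, pass to the limit using the global curvature bounds, and estimate the two resulting expectations via Cauchy--Schwarz and It\^o isometry together with the Gronwall bound $|W_s|\le e^{-Ks}$. Your explicit $L^2$ control of $W'_s$ via It\^o's formula on $|W'_s|^2$ is exactly what underlies the paper's terse ``Cauchy--Schwarz and It\^o isometry'' step, and your observation that the $W'$-term contributes only $O(e^{-Kt})$ (hence is dominated by the $e^{-Kt}/\sqrt{t\wedge 1}$ coming from the stochastic integral) is correct.
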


\begin{proof}
Take an exhaustion $\{D_i\}_{i=1}^\infty$ of $M$ by regular domains. For $x \in D_i$ with $k_s^i$ a real-valued Cameron-Martin process satisfying the conditions of Theorem \ref{thm:locformtwo}, we have
\begin{align}
&(\n dP_tf)(u,v) = \\
& -\E\left[ (df)(W_t(v))\int_0^t \langle W_s(\dot{k}^i_s u),//_sdB_s\rangle\right]\\
&- \E\left[ (df)(W_t\int_0^t\dot{k}^i_s\int_0^s W_r^{-1} R(//_r dB_r,W_r(u))W_r (v)ds)\right]\\
&+ \frac{1}{2}\E\left[ (df)(W_t\int_0^t \dot{k}^i_s \int_0^s W_r^{-1}(\nabla {\Ric}^\sharp_Z + d^\star R -2R(Z))(W_r ( u),W_r(v))dr ds) \right].
\end{align}
Since we assume the curvature operators are uniformly bounded, it follows by taking limits that the above formula holds with $k_s^i$ replaced by $k_s$ where
\begin{equation}
k_s := \frac{(1 \wedge t) - s}{1 \wedge t} \vee 0,\quad \dot{k}_s =
\begin{cases}
-1/(1 \wedge t), & 0 \leq s < 1\wedge t\\
0, & s \geq 1\wedge t
\end{cases}.
\end{equation}
With this choice of $k_s$, the desired estimate follows from the above equation by the Cauchy-Schwarz inequality and It\^{o} isometry.
\end{proof}

\begin{theorem}\label{thm:estthree}
Suppose $K \in \R$ with $\Ric_Z \geq 2K$. Suppose
\begin{equation}
\begin{aligned}
&\|R\|_\infty < \infty,\\
&\|\n R\|_\infty < \infty,
\end{aligned}\quad
\begin{aligned}
&\|\nabla {\Ric}^\sharp_Z + d^\star R -2R(Z)\|_\infty < \infty,\\
&\|\n(\nabla {\Ric}^\sharp_Z + d^\star R -2R(Z))\|_\infty < \infty.\\
\end{aligned}
\end{equation}
If $f \in C^1_b(M)$ then there exists a positive constant $C_2$ such that
\begin{equation}
\|\n\n d P_tf\|_\infty \leq  \frac{C_2 e^{-Kt}}{1\wedge t} \|\n f\|_\infty
\end{equation}
for all $t > 0$. If $f \in C^2_b(M)$ then there exists a positive constant $C_3$ such that
\begin{equation}
\|\n\n d P_tf\|_\infty \leq  \frac{C_3 e^{-Kt}}{\sqrt{1\wedge t}} (\|\n f\|_\infty + \|\Hess f\|_\infty)
\end{equation}
for all $t > 0$.
\end{theorem}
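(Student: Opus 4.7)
The plan is to derive both estimates from the derivative formulas proved in Section~\ref{sec:11}, in the same spirit as the proof of Theorem~\ref{thm:esttwo}. For the $C^1_b$ bound I would use Theorem~\ref{thm:locformthree}, since two integrations by parts leave only $df$ on the right; for the $C^2_b$ bound I would use Theorem~\ref{thm:secondderhess}, in which $\nabla df$ still appears and so naturally produces a $\|\Hess f\|_\infty$ contribution.

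The first preparatory step is to establish $L^2$-bounds on $W_s$, $W'_s$, $W''_s$. From $\Ric_Z \geq 2K$ and Gronwall applied to \eqref{eq:eqfordpt}, one has $|W_s|_{\mathrm{op}} \leq e^{-Ks}$ pathwise. The covariant It\^o equation \eqref{eq:eqforw'} for $W'_s$ then yields, via a Gronwall argument on $\E[|W'_s|^2]$ using $\|R\|_\infty < \infty$ and $\|\nabla \Ric^\sharp_Z + d^\star R - 2R(Z)\|_\infty < \infty$, an estimate of the form $(\E[|W'_s(u,v)|^2])^{1/2} \leq C s e^{-Ks} |u||v|$. The defining SDE for $W''_s$ has the same structure, now with additional drift terms involving $\nabla R$ and $\nabla(\nabla \Ric^\sharp_Z + d^\star R - 2R(Z))$ as well as the quadratic-in-$W'$ stochastic integrand and the $\tr R(\cdot,W(u))R(\cdot,W(v))W(w)$ drift; the hypotheses of the theorem ensure each coefficient is integrable, and an iterated Gronwall estimate gives $(\E[|W''_s(u,v,w)|^2])^{1/2} \leq C s e^{-Ks} |u||v||w|$.

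For the $C^2_b$ estimate, take an exhaustion of $M$ by regular domains and, as in the proof of Theorem~\ref{thm:esttwo}, pass to the limit in Theorem~\ref{thm:secondderhess} with $k_s = ((1\wedge t) - s)/(1\wedge t) \vee 0$, so that $|\dot k_s| = 1/(1\wedge t)$ on $[0, 1\wedge t]$. Bounding each of its four terms by Cauchy--Schwarz together with the It\^o isometry, each stochastic integral contributes a factor of order $(1\wedge t)^{-1/2}$, with the $W'$ and $W''$ factors controlled by the $L^2$-estimates above. The only new term compared to Theorem~\ref{thm:esttwo} is the $(\nabla df)(W_t(v),W_t(w))$ piece, bounded by $\|\Hess f\|_\infty e^{-2Kt}|v||w|$, which is what produces the $\|\Hess f\|_\infty$ term in the final bound. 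For the $C^1_b$ estimate, use Theorem~\ref{thm:locformthree} with two Cameron--Martin processes: set $t_1 = (1\wedge t)/2$, take $k_s$ dropping linearly from $\id_{T_xM}$ at $s=0$ to $0$ at $s=t_1$, and $l_s$ equal to $\id_{T_xM}$ on $[0,t_1]$ and dropping linearly to $0$ on $[t_1, 1\wedge t]$. Then $|\dot k_s|, |\dot l_s| \leq 2/(1\wedge t)$, and the disjointness of their supports in $s$ verifies the conditions of Lemma~\ref{lem:ibp2}. After passing through the exhaustion, the terms of Theorem~\ref{thm:locformthree} that are linear in a single stochastic integral contribute a factor $(1\wedge t)^{-1/2}$, while the critical term involving the product of the two stochastic integrals contributes $(1\wedge t)^{-1/2}\cdot(1\wedge t)^{-1/2} = (1\wedge t)^{-1}$, matching the claimed singularity.

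The main obstacle is the $L^2$-control of $W''_s$ in the preparatory step: its defining equation is quadratic in $W'$ and mixes the curvature derivatives appearing in all four hypotheses, so obtaining a clean growth bound uniform in $s$ forces us first to have the bound on $W'$ in hand and then to run a second Gronwall argument on top. Once these bounds are in place, everything else is the same Cauchy--Schwarz and It\^o isometry bookkeeping as in Theorem~\ref{thm:esttwo}.
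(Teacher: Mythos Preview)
Your proposal is correct and follows essentially the same approach as the paper: the paper's own proof simply says that, with $k_s$ chosen as in Theorem~\ref{thm:esttwo}, the two estimates follow from Theorems~\ref{thm:locformthree} and~\ref{thm:secondderhess} by Cauchy--Schwarz and the It\^o isometry. You have supplied the details the paper omits---in particular the $L^2$-control of $W'_s$ and $W''_s$, and the explicit choice of the second Cameron--Martin process $l_s$ required by Lemma~\ref{lem:ibp2}---and correctly identified why the double stochastic integral in Theorem~\ref{thm:locformthree} produces the $(1\wedge t)^{-1}$ singularity while Theorem~\ref{thm:secondderhess} gives only $(1\wedge t)^{-1/2}$. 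One small remark: the precise short-time growth of $(\E[|W'_s|^2])^{1/2}$ is of order $\sqrt{s}$ rather than $s$ (the It\^o term dominates), but this makes no difference to the final bound.
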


\begin{proof}
With the Cameron-Martin process $k_s$ chosen as in the proof of Theorem \ref{thm:esttwo}, these estimates follow from Theorems \ref{thm:locformthree} and \ref{thm:secondderhess} in a similar manner, simply by estimating each term in the formula using the Cauchy-Schwarz inequality and It\^{o} isometry.
\end{proof}

The three theorems of this section provide precisely the estimates required in the proofs of the main results in Section \ref{sec:6}.

\begin{comment}
%\section{Applications [TO DO]}

Suppose $\{V_i\}_{i = 1}^\infty$ are independent random vector fields on $(M,g)$ with
\begin{equation}
\E[V_i] = 0, \quad \E[V_i \otimes V_i] = g.
\end{equation}
Define recursively the geodesic random walk
\begin{equation}
X_{i+1} := \exp_{X_i}\left( \tfrac{1}{\sqrt{N}}V_{i+1} \right)
\end{equation}
for $i = 0,1,\ldots,N-1$ with $X_0 = x$. Under what conditions does $X_N$ converge in distribution to $B_1(x)$ as $N\rightarrow \infty$? Can we use our main results to estimate the rate of convergence?
\end{comment}

% Abstract and introduction (2-3 pages)

% Applications (10-20 pages)

% References (1 page)

% Geodesic random walks application?

% Normal approximation wrt heat kernel? Characterize with Bismut?

% Uniform approximation? Card shuffling (finite space though)? Groups?

% Convergence of Poisson point processes?

% What other applications are there of the derivative formulas? Reference everything, including Feng-Yu's Hessian paper. Can we generalize some of Feng-Yu's Hessian paper to ${\Ric}_Z$?

% Can do more general estimates with growth conditions? Maybe use $R(Z,e_i,e_j,e_l)=R(e_j,e_l,Z,e_i)$ to identify this term as involving a second covariant derivative of $Z$?

% Conformal change of metric? Wright-Fisher diffusion where coefficients vanish? Diffusions on $\R^n$? Can calculate curvature tensors? Include examples elsewhere, like for example submanifolds of $\R^n$?

% Use terminology `Bismut-Thalmaier formula'?

%\bibliographystyle{plain}
%\bibliography{Biblio}

\end{document}